\newcommand{\pres}[2]{\langle {#1}\ |\ {#2} \rangle}
\newcommand{\gpres}[1]{\langle {#1} \rangle}
\newcommand{\ngpres}[1]{\langle\langle {#1} \rangle\rangle}
\newcommand{\sgp}[1]{\langle {#1}\rangle}
\newcommand{\gwell}{l}
\newcommand{\eqnum}{\leavevmode\hfill\refstepcounter{equation}\textup{\tagform@{\theequation}}}
\def\Z{\mathbb Z}
\def\P{\mathcal{P}}
\def\im{{\rm im \,}}
\def\ra{\rightarrow}
\def\<{\langle}
\def\>{\rangle}
\newtheorem{conj}[thm]{Conjecture}
\begin{document}
\maketitle{ASPHERICAL RELATIVE PRESENTATIONS ALL OVER AGAIN}{%
  WILLIAM A. BOGLEY$^{\ast}$ and MARTIN EDJVET$^{\dagger}$ and GERALD WILLIAMS$^{\S}$}{
\newline  $^{\ast}$Department of Mathematics, Oregon State University, Kidder Hall 368, Corvallis, OR 97331-4605, USA\newline
  Email: Bill.Bogley@oregonstate.edu\\[2pt]
  $^{\dagger}$School of Mathematical Sciences, University of Nottingham, University Park, Nottingham, NG7 2RD, U.K.\newline
  Email: martin.edjvet@nottingham.ac.uk\\[2pt]
  $^{\S}$Department of Mathematical Sciences, University of Essex, Wivenhoe Park, Colchester,
Essex CO4 3SQ, U.K.\newline
  Email: gerald.williams@essex.ac.uk
  }

\begin{center}
\em  In gratitude to Stephen J.~Pride for his friendship and mentorship over the years.
\end{center}

\begin{abstract}
The concept of asphericity for relative group presentations was introduced twenty five years ago. Since then, the subject has advanced and detailed asphericity classifications have been obtained for various families of one-relator relative presentations. Through this work the definition of asphericity has evolved and new applications have emerged.

In this article we bring together key results on relative asphericity, update them, and exhibit them under a single set of definitions and terminology. We describe consequences of asphericity and present techniques for proving asphericity and for proving non-asphericity. We give a detailed survey of results concerning one-relator relative presentations where the relator has free product length four.
\end{abstract}

\section{Introduction}

Consider a \em relative group presentation \em $\mathcal{P} = \pres{G,\mathbf{x}}{\mathbf{r}}$. Thus, $G$ is a group, $\mathbf{x}$ is a set disjoint from $G$, and $\mathbf{r}$ is a set of words in the free product $G \ast F$, where $F$ denotes the free group with basis $\mathbf{x}$. The group defined by $\mathcal{P}$ is the quotient $G(\mathcal{P}) = (G \ast F)/R$ where $R$ is the normal closure of $\mathbf{r}$ in $G \ast F$.

Relative presentations and the groups they define have long been studied in connection with equations over groups (e.g.\ \cite{Freedman78}, \cite[Theorem~3]{GR}, \cite[Section~6.2]{MKS}, \cite{NeumannRoot}), where the objective is to determine conditions on $\mathcal{P}$ under which the natural homomorphism $G \ra G(\mathcal{P})$ is injective, so that $G$ can be viewed as a subgroup of $G(\mathcal{P})$. When the relative presentation $\mathcal{P}$ is aspherical, in a suitable sense, the cohomology and finite subgroups of $G(\mathcal{P})$ can be completely described in terms of those of $G$. In Section~\ref{section:Concepts} we describe asphericity concepts in algebraic topological terms. Methods from combinatorial geometry for proving asphericity are the focus in Section~\ref{section:Methods}. In Section~\ref{section:LengthFour} we survey asphericity classifications that have been obtained for one-relator relative presentations where the relator has free product length four.

\section{Asphericity concepts}\label{section:Concepts}

We refer the reader to \cite{BrownBook},\cite{BrownCohNote},\cite{HatcherAT},\cite{AJSAlgTop} for general aspects of algebraic topology and the cohomology of groups. A topological space $X$ is \em aspherical \em if each spherical map $S^k \ra X$, $k \geq 2$, is homotopic to a constant map. A connected CW complex $K$ is aspherical if and only if its universal covering complex $\widetilde{K}$ is acyclic, in which case $\widetilde{K}$ is actually contractible. The homotopy type of an aspherical CW complex $K$ is uniquely determined by its fundamental group and if $\pi_1K \cong G$ then $K$ is called an \em Eilenberg-Mac~Lane complex \em of type $K(G,1)$ and all homotopy invariants of the complex $K$ are algebraic invariants of the group $G$. In particular, the augmented cellular chain complex $\mathcal{C}_\ast(\widetilde{K}) \ra \Z$ provides a resolution of the trivial module $\Z$ by free $\Z G$-modules, which in turn determines the homology and cohomology groups of $G$.

Asphericity concepts for ordinary group presentations are well established \cite{CCH}, \cite{LyndonSchupp}, \cite{Ol91}. For example, the Lyndon Identity Theorem~\cite{LynId} implies that each one-relator presentation for a group $G$ is \em combinatorially aspherical, \em which means that a $K(G,1)$ can be constructed as in \cite{DV73}.

\subsection{Asphericity in terms of the cellular model}

In his work on equations over locally indicable groups, Howie \cite{How81} employed topological methods involving a relative two-complex $(L,K)$ associated to a relative presentation $\mathcal{P} = \pres{G,\mathbf{x}}{\mathbf{r}}$. Here, we take $K$ to be a $K(G,1)$-complex. The complex $L$ is then obtained by attaching two-cells to the one-point union $K \vee \bigvee_\mathbf{x} S^1_x$ via cellular loops $\dot{\varphi}_r:S^1 \ra K \vee \bigvee_\mathbf{x} S^1_x$ that realize the homotopy classes $[\dot{\varphi}_r] \equiv r \in G \ast F \cong \pi_1(K \vee \bigvee_\mathbf{x} S^1_x)$:
\begin{equation*}\label{eqn:Model}
L = K \vee \bigvee_\mathbf{x} S^1_x \cup \bigcup_\mathbf{r} c^2_r.
\end{equation*}
The relative two-complex $(L,K(G,1))$ is the \em cellular model \em  of $\mathcal{P}$. Because $K = K(G,1)$ is chosen to be aspherical, the homotopy types of $L$ and of the pair $(L,K)$ are both uniquely determined by the relative presentation $\mathcal{P}$. We denote $L$ by $L(\P)$.

We are interested in the situation where $L(\P)$ is aspherical and so is a $K(G(\P),1)$. The following technical lemma will be used repeatedly.

\begin{lemma}\label{Lemma:Covering} Suppose that $(Y,X)$ is a relative two-complex where $X$ and $Y$ are connected CW complexes and the inclusion-induced homomorphism $\pi_1X \ra \pi_1Y$ is injective. If $\pi_2Y = 0$, then $Y$ is aspherical if and only if $X$ is aspherical.
\end{lemma}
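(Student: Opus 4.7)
The plan is to compare universal covers by means of the long exact homology sequence of the pair, converting asphericity questions into vanishing statements for the homology of the covers via the Hurewicz theorem. Let $p \colon \widetilde{Y} \to Y$ denote the universal cover.

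First I would use the injectivity of $\pi_1 X \to \pi_1 Y$ to identify the preimage $p^{-1}(X) \subset \widetilde{Y}$ as a disjoint union of copies of the universal cover $\widetilde{X}$, one for each coset of $\pi_1 X$ in $\pi_1 Y$ (this is the step that crucially uses the hypothesis). Next, since the relative cellular chains for $(Y,X)$ are concentrated in dimensions one and two, the same holds for the lifted pair $(\widetilde{Y}, p^{-1}(X))$, so $H_n(\widetilde{Y}, p^{-1}(X)) = 0$ for $n \geq 3$. Feeding this into the long exact sequence of the pair yields isomorphisms
\[
H_n(p^{-1}(X)) \cong H_n(\widetilde{Y}) \quad \text{for } n \geq 3,
\]
together with an injection $H_2(p^{-1}(X)) \hookrightarrow H_2(\widetilde{Y})$ in degree two.

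The hypothesis $\pi_2 Y = 0$ together with Hurewicz applied to the simply connected space $\widetilde{Y}$ forces $H_2(\widetilde{Y}) = 0$, and the injection above then gives $H_2(p^{-1}(X)) = 0$ as well. Since $p^{-1}(X)$ is a disjoint union of copies of $\widetilde{X}$, its homology in each positive degree is a direct sum of copies of $H_n(\widetilde{X})$; in particular, vanishing of $H_n(p^{-1}(X))$ is equivalent to vanishing of $H_n(\widetilde{X})$. Combining these observations with Hurewicz (asphericity of a connected CW complex is equivalent to acyclicity of its universal cover in degrees $\geq 2$) reduces both asphericity statements to the vanishing of $H_n$ of the respective universal covers for $n \geq 3$, and the isomorphisms established above deliver the equivalence.

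The main obstacle is really Step one: verifying cleanly that the injectivity hypothesis forces the components of $p^{-1}(X)$ to be copies of $\widetilde{X}$ rather than some proper quotient cover of $X$. Once that is in place, the degree-two analysis uses $\pi_2 Y = 0$ in a single application of Hurewicz, and the remainder is a routine diagram chase along the long exact sequence.
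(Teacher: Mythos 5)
Your argument is correct and is essentially the paper's own proof: both identify the components of $p^{-1}(X)$ with copies of $\widetilde{X}$ via the $\pi_1$-injectivity, note that $(Y,X)$ has no relative cells above dimension two, and then run the long exact homology sequence of $(\widetilde{Y},p^{-1}(X))$ together with Hurewicz and the identification $\pi_2Y\cong H_2\widetilde{Y}$. The covering-space step you flag as the main obstacle is the standard fact that a component of $p^{-1}(X)$ is the cover of $X$ classified by $\ker(\pi_1X\ra\pi_1Y)$, which is trivial by hypothesis, so nothing further is needed.
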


\begin{proof}
Let $p:\widetilde{Y} \ra Y$ be the universal covering projection. Since $\pi_1 X \ra \pi_1Y$ is injective, each component of the pre-image $p^{-1}(X)$ is a copy of the universal covering complex $\widetilde{X}$ and so $H_k(p^{-1}(X))$ is a direct sum of copies of $H_k \widetilde{X}$. Since $\pi_2Y \cong \pi_2\widetilde{Y} \cong H_2\widetilde{Y}$ and the complement $Y-X$ has no cells in dimensions three and up, the result follows by considering the long exact homology sequence for the pair $(\widetilde{Y},p^{-1}(X))$.
\end{proof}

The term `aspherical' has been applied to relative group presentations in a variety of ways. The most general of these is given in Definition~\ref{Definition:Aspherical} below. This concept appeared implicitly in \cite[Theorem~4.1]{BP} and explicitly as a defined concept in \cite{BW1}; it is expressed in terms of the second relative homotopy group $\pi_2(L(\P),K(G,1))$ of the cellular model. This formulation is broad enough to accommodate previous incarnations of the term `aspherical', but is also specific enough to activate the principal group-theoretic and topological consequences of asphericity such as those described in Theorem~\ref{Theorem:Theory} and Theorem~\ref{thm:3mfdcriterion}, below. In addition to the intrinsic value of a flexible definition of a basic concept, this definition of asphericity clearly delineates the concept of non-asphericity in an algebraic way. This is reflected in recent and ongoing work, such as \cite{BShift},\cite{BW1},\cite{BW2},\cite{HW16},\cite{McD17}, in which, when studying certain families of relative presentations, interesting groups and presentations are uncovered in the non-aspherical realm.

\begin{de}[See \textbf{\cite[Section~4]{BP},\cite{BW1}}]\label{Definition:Aspherical}A relative presentation $\mathcal{P} = \pres{G,\mathbf{x}}{\mathbf{r}}$ is \emph{aspherical} if the second relative homotopy group $\pi_2(L(\P),K(G,1))$ is trivial.
\end{de}

\begin{lemma}\label{Lemma:KG1} A relative presentation $\mathcal{P} = \pres{G,\mathbf{x}}{\mathbf{r}}$ is aspherical if and only if the natural homomorphism $G \ra G(\mathcal{P})$ is injective and $L(\P)$ is a $K(G(\mathcal{P}),1)$-complex.
\end{lemma}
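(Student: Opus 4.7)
The plan is to extract both statements from the long exact sequence of homotopy groups of the pair $(L(\P), K)$, using $K = K(G,1)$ to kill higher homotopy of $K$, and then invoke Lemma~\ref{Lemma:Covering} to upgrade the vanishing of $\pi_2$ to genuine asphericity of $L(\P)$.

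In more detail, set $K = K(G,1)$ and $L = L(\P)$, and consider the tail
\[
\pi_2 K \lra \pi_2 L \lra \pi_2(L,K) \lra \pi_1 K \lra \pi_1 L
\]
of the long exact homotopy sequence of the pair $(L,K)$. Since $K$ is aspherical, $\pi_2 K = 0$, and under the identifications $\pi_1 K \cong G$, $\pi_1 L \cong G(\P)$ the final map is exactly the natural homomorphism $G \ra G(\P)$.

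For the forward direction, assume $\pi_2(L,K) = 0$. Exactness at $\pi_1 K$ immediately gives injectivity of $G \ra G(\P)$, and exactness at $\pi_2 L$ (combined with $\pi_2 K = 0$) gives $\pi_2 L = 0$. Now $(L,K)$ is a relative two-complex in which $\pi_1 K \ra \pi_1 L$ is injective and $\pi_2 L = 0$, so Lemma~\ref{Lemma:Covering} applies and yields asphericity of $L$ from asphericity of $K$; thus $L$ is a $K(G(\P),1)$. Conversely, if $G \ra G(\P)$ is injective and $L$ is a $K(G(\P),1)$, then $\pi_2 L = 0$, so the segment
\[
\pi_2 L \lra \pi_2(L,K) \lra \pi_1 K \lra \pi_1 L
\]
has trivial outer term on the left and injective right-hand map, forcing $\pi_2(L,K) = 0$.

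I do not anticipate any serious obstacle; the argument is essentially a bookkeeping exercise with the long exact sequence. The one point that has to be handled carefully is that Definition~\ref{Definition:Aspherical} only asserts the vanishing of the \emph{relative} group $\pi_2(L,K)$, whereas the conclusion demands that $L$ itself be aspherical (not merely that $\pi_2 L$ vanishes), and this is precisely the gap bridged by Lemma~\ref{Lemma:Covering}.
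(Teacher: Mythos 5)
Your argument is correct and is essentially identical to the paper's own proof: both extract injectivity of $G \ra G(\P)$ and the vanishing of $\pi_2 L$ from the long exact homotopy sequence of the pair $(L,K)$, and both then invoke Lemma~\ref{Lemma:Covering} to upgrade $\pi_2 L = 0$ to asphericity of $L$. The converse direction is handled the same way in both.
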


\begin{proof}
Letting $(L,K) = (L(\P),K(G,1))$, the natural homomorphism $G \ra G(\mathcal{P})$ is identified with the inclusion-induced homomorphism $\pi_1K \ra \pi_1L$, which in turn appears in the long exact sequence for the pair $(L,K)$:
\begin{equation*}\label{eqn:LEHS}
0 = \pi_2K \ra \pi_2 L \ra \pi_2(L,K) \ra \pi_1K \ra \pi_1L \ra 1.
\end{equation*}
This shows that if $\pi_2(L,K) = 0$ then $G \ra G(\mathcal{P})$ is injective and $\pi_2L = 0$, so that $L$ is aspherical by Lemma~\ref{Lemma:Covering}. The converse statement follows immediately from the long exact sequence. \end{proof}

If $\P = \pres{G,\mathbf{x}}{\mathbf{r}}$ is aspherical, then not only is the natural homomorphism $G \ra G(\mathcal{P})$ injective, so that we can view $G$ as a subgroup of $G(\P)$, but $G$ plays a determining role in the subgroup and homological structure of $G(\mathcal{P})$. More precisely we have the following.

\begin{thm}\label{Theorem:Theory}\emph{(\textbf{Compare \cite{BP}})} Let $\mathcal{P} = \pres{G,\bf{x}}{\bf r}$ be an aspherical relative presentation.
\begin{enumerate}
\item[(a)] If $\Gamma$ is a subgroup of $G(\P)$ such that $\Gamma \cap G = 1$, then $\Gamma$ has geometric dimension at most two.
\item[(b)] For $n \geq 3$, the embedding $G \ra G(\mathcal{P})$ determines natural equivalences of functors defined on the category of $\Z G(\mathcal{P})$-modules:
\begin{eqnarray*}
H_n(G,\mathrm{Res}^{G(\mathcal{P})}_G(-)) &\ra & H_n(G(\mathcal{P}),-),\\
H^n(G(\mathcal{P}),-) &\ra& H^n(G,\mathrm{Res}^{G(\mathcal{P})}_G(-)).
\end{eqnarray*}
\item[(c)] (\emph{\textbf{\cite[Section~3]{Hueb}, attributed to Serre}}; see also \emph{\textbf{\cite{FR}}} and \emph{\textbf{\cite[Theorem~5]{HS}}}) Every finite subgroup of $G(\mathcal{P})$ is conjugate to a subgroup of $G$. Moveover if $w \in G(\mathcal{P})$ and the intersection $G \cap wGw^{-1}$ contains a non-trivial element of finite order, then $w \in G$.
\end{enumerate}
\end{thm}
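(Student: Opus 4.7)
The common infrastructure for all three parts rests on Lemma~\ref{Lemma:KG1}: asphericity makes $L = L(\P)$ a $K(G(\P),1)$, so the universal cover $p: \widetilde{L} \ra L$ is contractible, and, since $G \ra G(\P)$ is injective, the preimage $p^{-1}(K)$ is a disjoint union $\bigsqcup_{wG \in G(\P)/G} \widetilde{K}_{wG}$ of contractible copies of $\widetilde{K}$ indexed by the left cosets of $G$. The relative CW structure on $(\widetilde{L}, p^{-1}(K))$ has cells only in dimensions $1$ and $2$ (lifts of the $\mathbf{x}$-circles and of the relator $2$-cells), and $G(\P)$ permutes these cells freely. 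Collapsing each component of $p^{-1}(K)$ to a point yields a contractible $2$-complex $Y$ on which $G(\P)$ acts cellularly, with the stabilizer of the vertex $wG$ being $wGw^{-1}$ and the action being free on positive-dimensional cells.

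For (b) I would invoke the long exact sequences in homology and cohomology of the pair $(L,K)$ with local coefficients in a $\Z G(\P)$-module $M$. Because $(L,K)$ is a relative $2$-complex, $H_n(L,K; M) = 0$ and $H^n(L,K; M) = 0$ for $n \geq 3$, forcing the inclusion-induced maps $H_n(K; \mathrm{Res}\, M) \ra H_n(L; M)$ and $H^n(L; M) \ra H^n(K; \mathrm{Res}\, M)$ to be isomorphisms in those degrees. Since $K \simeq K(G,1)$ and $L \simeq K(G(\P),1)$, these are precisely the asserted natural equivalences, with naturality in $M$ inherited from the functoriality of the long exact sequence.

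For (c) I would apply the classical fixed-point theorem for finite groups acting on contractible CW complexes (e.g.\ via P.~A.~Smith theory as in Brown, \emph{Cohomology of Groups}, Ch.~VIII~\S7): for a finite subgroup $H \leq G(\P)$, the fixed-point set $Y^H$ is nonempty and contractible. Because $G(\P)$ acts freely on the positive-dimensional cells of $Y$, the set $Y^H$ consists entirely of vertices, so $H$ stabilizes some vertex $wG$ and therefore $H \leq wGw^{-1}$, giving the first assertion. For the second, suppose $h \in G \cap wGw^{-1}$ has finite order; then $\langle h \rangle$ fixes both the vertices $G$ and $wG$, so the contractible (in particular connected) set $Y^{\langle h \rangle}$ contains both. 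Since it contains no positive-dimensional cells, it must reduce to a single vertex, forcing $wG = G$ and hence $w \in G$.

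Part (a) is the most delicate and I expect it to be the main obstacle. The strategy is to construct a $2$-dimensional $K(\Gamma,1)$ from the $\Gamma$-action on $Y$. The hypothesis $\Gamma \cap G = 1$ gives trivial stabilizer at the vertex $G$, but at other vertices $wG$ the stabilizer $\Gamma \cap wGw^{-1}$ need not be trivial, so $\Gamma$ generally does not act freely on $Y$ and the naive quotient $Y/\Gamma$ is not immediately a $K(\Gamma,1)$. The hard part will be controlling these vertex stabilizers: using (c) and the algebraic structure of the $\Gamma$-action on $G(\P)/G$, one should show that the $K(\Gamma,1)$ assembled from the equivariant cells of $Y$ together with classifying spaces for the vertex stabilizers can be arranged to be two-dimensional, with asphericity of the resulting complex verified by a twisted homology argument paralleling part (b).
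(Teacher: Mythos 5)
Your setup is the right one, and your treatment of (b) is correct: the vanishing of the relative (co)homology of $(L,K)$ with $\Z G(\P)$-coefficients in degrees $\geq 3$ is exactly the paper's chain-level observation that $\mathcal{C}_\ast(\widetilde{L}) \cong \Z G(\P)\otimes_G\mathcal{C}_\ast(\widetilde{K})$ in those degrees, and your long-exact-sequence phrasing cleanly handles the edge case $n=3$ that the paper relegates to a footnote. The problems are in (c) and (a).

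For (c), the ``classical fixed-point theorem for finite groups acting on contractible CW complexes'' that you invoke does not exist. Smith theory yields $Y^H\neq\emptyset$ (indeed mod-$p$ acyclic, hence connected) only when $H$ is a $p$-group; Floyd and Richardson constructed fixed-point-free actions of $A_5$ on finite-dimensional contractible complexes, Oliver classified the finite groups admitting such actions on disks, and even for $2$-dimensional complexes the general fixed-point question is delicate (it is the subject of work of Oliver and Segev and of the Casacuberta--Dicks conjecture). Your argument therefore establishes (c) only for finite $p$-subgroups, and the ``moreover'' clause only after replacing $h$ by a power of prime order --- that reduction does work: $Y^{\sgp{h^{n/p}}}$ is mod-$p$ acyclic, hence a connected set of vertices, hence a single vertex, forcing $wG=G$. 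For an arbitrary finite subgroup the missing ingredient is exactly what the citations in the statement supply: either the fixed-point theorem of \cite{FR}, which is proved specifically for acyclic $2$-complexes with the action free off the vertex set and is not a formal consequence of Smith theory, or the deduction of (c) from (b) in \cite{Hueb},\cite{HS}, where the exact sequence $0\ra C_2Y\ra C_1Y\ra C_0Y\ra\Z\ra 0$, with $C_1,C_2$ free over $\Z H$ and $C_0$ a permutation module $\bigoplus_w\Z[H/(H\cap wGw^{-1})]$, forces some vertex stabilizer to equal $H$. That last step is genuinely nontrivial: the degree-zero computation $\hat H^0(H,\Z)\cong\Z/|H|$ alone is consistent with, say, $H\cong\Z/6$ meeting the conjugates of $G$ only in subgroups $\Z/2$ and $\Z/3$, so one really needs \cite[Theorem~5]{HS} or an equivalent.

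For (a) you have correctly located the difficulty but not resolved it, and it cannot be resolved under the hypothesis as you (and the displayed statement) read it: if $G$ is a nontrivial finite group, $\P$ is aspherical and $w\in G(\P)\setminus G$ (such presentations exist in abundance, e.g.\ by Theorem~\ref{thm:DRinf}), then part (c) gives $G\cap wGw^{-1}=1$, so $\Gamma=wGw^{-1}$ satisfies $\Gamma\cap G=1$ while being a nontrivial finite group, hence of infinite geometric dimension. The hypothesis must be read as in the corresponding statement of \cite{BP}: $\Gamma\cap wGw^{-1}=1$ for \emph{every} $w\in G(\P)$. Under that hypothesis nothing delicate remains and no ``twisted homology argument'' is needed: every vertex stabilizer of the $\Gamma$-action on $Y$ is trivial, so $\Gamma$ acts freely on the contractible $2$-complex $Y$ and $Y/\Gamma$ is a $2$-dimensional $K(\Gamma,1)$. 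This is precisely the paper's proof, phrased there via the covering $\overline{L}\ra L$ corresponding to $\Gamma$, in which every component of the preimage of $K$ is a copy of $\widetilde{K}$ and may be collapsed.
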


\begin{proof} Let $(L,K) = (L(\P),K(G,1))$. Given $\Gamma \leq G(\P)$ as in (a), there is a covering projection $p:\overline{L} \ra L$ for which $\pi_1\overline{L} \cong \Gamma$. Since $\Gamma \cap G = 1$, the pre-image $p^{-1}(K)$ is a disjoint union of copies of the contractible universal covering complex $\widetilde{K(G,1)}$. Since the inclusion of $p^{-1}(K)$ in $\overline{L}$ is a cofibration, the quotient map $q:\overline{L} \ra X$ that collapses each component of $p^{-1}(K)$ to a zero-cell is a homotopy equivalence, so $X$ is an aspherical two-complex with fundamental group $\Gamma$. To prove (b), denote the simply connected covering complexes by $\widetilde{L}$ and $\widetilde{K}$. Since $G$ embeds in $G(\P)$ by Lemma~\ref{Lemma:KG1}, there is a chain isomorphism $\mathcal{C}_\ast(\widetilde{L}) \cong \Z G(\mathcal{P}) \otimes_G \mathcal{C}_\ast(\widetilde{K})$ in dimensions $\ast \geq 3$, so if $M$ is an arbitrary (right) $\Z G(\mathcal{P})$-module and $n \geq 3$, then\footnote{For the isomorphisms when $n=3$, the fact that each three-cell of $L$ is contained in $K$ implies that $\ker(M \otimes_{G(\P)} C_3 \widetilde{L} \ra M \otimes_{G(\P)} C_2 \widetilde{L} ) = \ker(M \otimes_{G} C_3\widetilde{K} \ra M \otimes_{G} C_2\widetilde{K})$ and $\im(\mathrm{Hom}_{G(\P)}(C_2\widetilde{L}, M) \ra \mathrm{Hom}_{G(\P)}(C_3\widetilde{L}, M)) = \im(\mathrm{Hom}_{G}(C_2\widetilde{K}, M) \ra \mathrm{Hom}_{G}(C_3\widetilde{K}, M)).$}
\begin{eqnarray*}
H_n(G(\mathcal{P}),M) &=& H_n(M \otimes_{G(\mathcal{P})} \mathcal{C}_\ast(\widetilde{L}))\\
&\cong& H_n(M \otimes_{G(\mathcal{P})} \Z G(\mathcal{P}) \otimes_G \mathcal{C}_\ast(\widetilde{K}))\\
&\cong& H_n(M \otimes_G \mathcal{C}_\ast(\widetilde{K}))\\
&\cong& H_n(G,\mathrm{Res}^{G(\mathcal{P})}_G(M)).
\end{eqnarray*}
Similar calculations involving $\mathrm{Hom}$ lead to cohomology isomorphisms as in (b). The statement (c) is a direct consequence of (b) as discussed in the cited references.
\end{proof}

\begin{re}\label{rem:geomdim}
If $G $ is trivial and $K = K(G,1)$ is a point, then $\mathcal{P}=\pres{G,\mathbf{x}}{\mathbf{r}}$ is an ordinary group presentation with cellular model $L$ and asphericity of $\P$ is equivalent to (topological) asphericity of $L$ (see e.g.\ \cite{AJSAlgTop}). The conclusions of Theorem~\ref{Theorem:Theory} are relativized versions of standard facts about aspherical two-complexes. In particular, the fundamental group of an aspherical two-complex is torsion-free and has geometric and cohomological dimension at most two.
\end{re}

The asphericity status of a relative presentation $\P = \pres{G,\mathbf{x}}{\mathbf{r}}$ is unaffected if any relator $r \in \mathbf{r}$ is replaced by a conjugate $wr^\epsilon w^{-1}$ where $\epsilon = \pm 1$ and $w \in G \ast F$ because the homotopy type of the cellular model is unaffected by such a change. Thus one can assume that the relators of $\P$ are cyclically reduced. However, asphericity does impose some combinatorial restrictions on the relator set $\mathbf{r} \subseteq G \ast F$.

\begin{lemma}\label{lemma:relatorconditions}
Let $\P = \pres{G,\mathbf{x}}{\mathbf{r}}$ be an aspherical relative presentation.
\begin{itemize}
\item[(a)] No relator $r \in \mathbf{r}$ is conjugate in $G \ast F$ to an element of $G$.
\item[(b)] If $r,s \in \mathbf{r}$ and $s$ is conjugate to $r^\epsilon$ in $G \ast F$, then $r=s$ and $\epsilon = 1$.
\item[(c)] If $r \in \mathbf{r}$, then $r$ is not expressible as a proper power in $G \ast F$: if $r = \mathring{r}^e$ in $G \ast F$, then $e = \pm 1$.
\end{itemize}
\end{lemma}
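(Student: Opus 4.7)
The plan is to assume $\P$ is aspherical and, for each part, derive a contradiction with either the injectivity of $G \ra G(\P)$ or the vanishing of $\pi_2 L$, both of which are consequences of Lemma~\ref{Lemma:KG1}. For parts (b) and (c) the argument takes place in the cellular chain complex $\mathcal{C}_\ast(\widetilde{L})$ of the universal cover $p: \widetilde{L} \ra L$: asphericity yields $H_1\widetilde{L} = H_2\widetilde{L} = 0$, and since every three-cell of $\widetilde{L}$ lies above $K$ we have $\partial_3 \mathcal{C}_3(\widetilde{L}) \subseteq \mathcal{C}_2(p^{-1}K)$. To contradict asphericity it therefore suffices to exhibit a two-cycle whose coefficient on some lifted relator cell $\tilde{c}^2_r$ is non-zero in $\Z G(\P)$.

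Part~(a) is purely topological. Replacing $r$ by a conjugate in $G \ast F$, which preserves the homotopy type of $L$, I may assume $r \in G$. Then $r$ represents both an element of $G$ and the trivial element of $G(\P)$, and injectivity of $G \ra G(\P)$ forces $r = 1$ in $G \ast F$. The attaching map of $c^2_r$ is then constant, giving $L \simeq L' \vee S^2$ and hence $\pi_2 L \ne 0$, a contradiction.

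For part~(b), suppose $s = w r^\epsilon w^{-1}$ in $G \ast F$ with $(r,\epsilon) \ne (s,1)$. A Fox-calculus computation using $s = 1$ in $G(\P)$ gives $\partial_2 \tilde{c}^2_s = \epsilon w \cdot \partial_2 \tilde{c}^2_r$ in $\mathcal{C}_1(\widetilde{L})$, so $z := \tilde{c}^2_s - \epsilon w \cdot \tilde{c}^2_r$ is a two-cycle. If $r \ne s$ then the $\tilde{c}^2_s$-coefficient of $z$ is $1$; if $r = s$ and $\epsilon = -1$ then $z = (1+w)\tilde{c}^2_r$, and its $\tilde{c}^2_r$-coefficient $1+w \in \Z G(\P)$ is non-zero since $-1 \notin G(\P)$. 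Either case gives the contradiction.

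Part~(c) contains the main obstacle. Write $r = \mathring{r}^e$ with $|e| \ge 2$ and set $N = 1 + \mathring{r} + \cdots + \mathring{r}^{e-1} \in \Z G(\P)$. Splitting the attaching loop into $e$ consecutive traversals yields $\partial_2 \tilde{c}^2_r = N \xi$, where $\xi \in \mathcal{C}_1(\widetilde{L})$ is the $1$-chain of a single lifted traversal of $\mathring{r}$. When $\mathring{r} \ne 1$ in $G(\P)$, the chain $(\mathring{r} - 1)\tilde{c}^2_r$ is a two-cycle, because $(\mathring{r} - 1) N = \mathring{r}^e - 1 = r - 1 = 0$ in $\Z G(\P)$, and its $\tilde{c}^2_r$-coefficient $\mathring{r} - 1$ is non-zero. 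The delicate residual case is $\mathring{r} = 1$ in $G(\P)$, where the previous candidate cycle vanishes; then $\xi$ closes up to a $1$-cycle in $\widetilde{L}$, so $H_1\widetilde{L} = 0$ supplies $\eta \in \mathcal{C}_2(\widetilde{L})$ with $\partial_2 \eta = \xi$, and $z := \tilde{c}^2_r - e\eta$ is a two-cycle whose $\tilde{c}^2_r$-coefficient $1 - ea$ (with $a \in \Z G(\P)$ the $\tilde{c}^2_r$-coefficient of $\eta$) is non-zero, because the augmentation $\varepsilon: \Z G(\P) \ra \Z$ sends it to $1 - e\varepsilon(a) \ne 0$ for $|e| \ge 2$.
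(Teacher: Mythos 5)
Your proof is correct, and for parts (b) and (c) it reaches the paper's contradictions by what is essentially the homological shadow of the paper's argument, run one level down. The paper works in the relative homotopy group $\pi_2(L,K_1)$, where $K_1 = K \vee \bigvee_\mathbf{x} S^1_x$: it uses injectivity of the homotopy boundary $\partial:\pi_2(L,K_1)\ra\pi_1K_1$ (a consequence of $\pi_2L=0$) to kill elements such as $\Delta=[\varphi_r]^{-\epsilon}(w\cdot[\varphi_s])$ and $(\mathring{r}\cdot[\varphi_r])[\varphi_r]^{-1}$, and then applies the Hurewicz map onto the free basis of $H_2(L,K_1)\cong\Z^{\mathbf{r}}$, resp.\ its equivariant version $H_2(\widetilde{L},p^{-1}(K_1))\cong\bigoplus_{\mathbf{r}}\Z G(\P)$. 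You instead work directly in $\mathcal{C}_\ast(\widetilde{L})$ and replace the injectivity of $\partial$ by the observation that $\partial_3$ lands in $\mathcal{C}_2(p^{-1}(K))$, so any $2$-cycle with nonzero $\Z G(\P)$-coefficient on a lifted relator cell survives to $H_2\widetilde{L}\cong\pi_2L$; your Fox-calculus identities $\partial_2\tilde{c}^2_s=\epsilon w\cdot\partial_2\tilde{c}^2_r$ and $\partial_2\tilde{c}^2_r=N\xi$ are exactly the chain-level counterparts of the paper's homotopy elements, and your augmentation argument in the residual subcase $\mathring{r}=1$ in $G(\P)$ reproduces the paper's final step $0=eh_2(D)-h_2([\varphi_r])$. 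What your route buys is the avoidance of relative homotopy groups altogether, at the cost of having to justify the boundary computations by hand (which you do correctly). Part (a) is where you genuinely diverge: the paper produces a nonzero class in the image of $\pi_2(L,K)\ra\pi_2(L,K_1)$ for an arbitrary $g\in G$, whereas you conjugate $r$ into $G$, use injectivity of $G\ra G(\P)$ to force $r=1$ in $G\ast F$, and then split off an $S^2$ wedge summand; both are valid. Two small polish points: in (c) you should say ``after replacing $\mathring{r}$ by $\mathring{r}^{-1}$ assume $e\geq 2$'' before writing $N=1+\mathring{r}+\cdots+\mathring{r}^{e-1}$, and in (b) the justification that $1+w\neq 0$ is better phrased as: $1+w$ is a sum of group elements with positive integer coefficients, hence nonzero in $\Z G(\P)$.
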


\begin{proof} Let $(L,K) = (L(\P),K(G,1))$ and $K_1 = K \vee \bigvee_\mathbf{x} S^1_x \subseteq L$. Since $\pi_2L = 0$, the long exact sequence for the pair $(L,K_1)$ shows that the homotopy boundary $\partial: \pi_2(L,K_1) \ra \pi_1K_1$ is injective. For each $r \in \mathbf{r}$, the two-cell $c^2_r$ of $L$ has a characteristic map $\varphi^2_r:(B^2,S^1) \ra (L,K_1)$  with homotopy class $[\varphi_r] \in \pi_2(L,K_1)$. The Hurewicz homomorphism $h_2: \pi_2(L,K_1) \ra H_2(L,K_1)$ carries the corresponding homotopy classes $[\varphi_r] \in \pi_2(L,K_1)$, $r \in \mathbf{r}$, to a free $\Z$-basis for the cellular homology group $H_2(L,K_1) \cong \Z^\mathbf{r}$.

To prove (a), just suppose that $r = wgw^{-1} \in \mathbf{r}$ where $g \in G$ and $w \in G \ast F$. Using the homotopy action of $\pi_1K_1 \cong G \ast F$ on $\pi_2(L,K_1)$, the element $C = w^{-1}\cdot [\varphi_r] \in \pi_2(L,K_1)$ satisfies $\partial C = g \in G \cong \pi_1K$, so $C$ is in the image of the inclusion-induced homomorphism $\pi_2(L,K) \ra \pi_2(L,K_1)$. On the other hand $h_2(C) = h_2([\varphi_r]) \neq 0$, so this contradicts asphericity of $\P$.

For (b), if $r^\epsilon = ws w^{-1}$ in $G \ast F$, then  the element $\Delta = [\varphi_r]^{-\epsilon}(w \cdot [\varphi_s]) \in \pi_2(L,K_1)$ satisfies $\partial \Delta = r^{-\epsilon} wsw^{-1} = 1$ in $\pi_1K_1 \cong G \ast F$ and so $\Delta$ so is trivial in $\pi_2(L,K_1)$. Applying $h_2$, we therefore have the $\Z$-linear relation $0 = h_2(\Delta) = -\epsilon h_2([\varphi_r])+h_2([\varphi_s])$ involving members of a basis of $H_2(L,K_1) \cong \Z^\mathbf{r}$. This implies that $\epsilon = 1$ and $r = s$ in $\mathbf{r}$.

For (c), suppose that $r = \mathring{r}^{e} \in \mathbf{r} \subseteq G \ast F$. We show that $e = \pm 1$. The element $\Sigma = (\mathring{r} \cdot [\varphi_r])[\varphi_r]^{-1} \in \pi_2(L,K_1)$ satisfies $\partial \Sigma = \mathring{r}r\mathring{r}^{-1}r^{-1} = 1$ in $\pi_1K_1 \cong G \ast F$, so $\Sigma = 1$ in $\pi_2(L,K_1)$. Letting $p:\widetilde{L} \ra L$ be the simply connected covering space, a choice of basepoints determines an isomorphism $\pi_2(L,K_1) \ra \pi_2(\widetilde{L},p^{-1}(K_1))$, which we then compose with the Hurewicz homomorphism $h_2':\pi_2(\widetilde{L},p^{-1}(K_1)) \ra H_2(\widetilde{L},p^{-1}(K_1))$. In this way, the elements $[\varphi_r]$ pass to a free $\Z G(\P)$-basis for the equivariant cellular chain group $H_2(\widetilde{L},p^{-1}(K_1)) \cong \bigoplus_\mathbf{r} \Z G(\P)$ consisting of preferred lifts $\tilde{c}^2_r$ of the two-cells of $L-K$. Under this process, the element $\Sigma \in \pi_2(L,K)$ lifts to $(\mathring{r}-1)\tilde{c}^2_r \in H_2(\widetilde{L},p^{-1}(K_1))$, so the fact that $\Sigma$ is trivial in $\pi_2(L,K_1)$ implies that $\mathring{r} - 1 = 0$ in $\Z G(\P)$, whence $\mathring{r} = 1$ in $G(\P)$. Thus there exists $D \in \pi_2(L,K_1)$ such that $\partial(D) = \mathring{r} \in \pi_1K_1 \cong G \ast F$, so that the element $D^e [\varphi_r]^{-1}$ is in the kernel of the homotopy boundary $\partial$. Again, the fact that $\pi_2L = 0$ implies that $D^e [\varphi_r]^{-1}$ is trivial in $\pi_2(L,K_1)$ and so $0 = h_2(D^e [\varphi_r]^{-1}) = eh_2(D) - h_2([\varphi_r])$. Since $h_2([\varphi_r])$ is part of a $\Z$-basis for $H_2(L,K_1)$, this implies that $e = \pm 1$.
\end{proof}
\begin{de}[Orientability \cite{BP}]\label{def:orientable} A relative presentation $\P = \pres{G,\mathbf{x}}{\mathbf{r}}$ is \em orientable \em if it satisfies the relator conditions in Lemma~\ref{lemma:relatorconditions}(a) and (b).
\end{de}
Thus every aspherical relative presentation is orientable. The term `orientable' is used because it implies that no relator is a cyclic permutation of its inverse; in particular, no relator can be written in the form $r = wg_1w^{-1}g_2$ where $w \in G \ast F$ and $g_1, g_2 \in G$ both have order two. See \cite[Section~1.1]{BP}. Note that orientability does not exclude the presence of proper power relators. See Remark \ref{rem:genremarks}(c).

\begin{re}[Proper powers]\label{rem:caveats} In the presence of proper power relators $r = \mathring{r}^{e(r)} \in \mathbf{r} \subseteq G \ast F$, where $e(r)$ is the (maximal) exponent for $r$, an expanded cellular model $M(\P)$ is obtained from the one-point union $K(G,1) \vee \bigvee_\mathbf{x} S^1_x$ by attaching a copy of a $K(\Z_{e(r)},1)$-complex in place of the two-cell $c^2_r$ so that $K(G,1) \subseteq L(\P) \subseteq M(\P)$. The construction is described in detail in \cite[Section 4]{BP} and is based upon one that was introduced by Dyer and Vasquez \cite{DV73}. In \cite[Theorem 4.3]{BP} it was shown that if $\pi_2(M(\P),K(G,1))$ is trivial, then $M(\P)$ is a $K(G(\P),1)$-complex and that the root $\mathring{r}$ of each relator generates a cyclic subgroup $\sgp{\mathring{r}}$ of order $e(r)$ in $G(\P)$. (See \cite[Proposition 1]{Hueb} or \cite[page 36]{BP}.) Cohomology calculations and classifications of finite subgroups follow as in Theorem~\ref{Theorem:Theory}(b),(c), see \cite[(0.3),(0.4)]{BP} for details. The property $\pi_2(M(\P),K(G,1)) = 0$ for the relative presentation $\P$ is analogous to the concept of \em combinatorial asphericity \em for ordinary presentations \cite{CCH}. The long exact homotopy sequence for the triple $(M(\P), L(\P), K(G,1))$ implies that the inclusion-induced homomorphism $\pi_2(L(\P),K(G,1)) \ra \pi_2(M(\P),K(G,1))$ is surjective since $\pi_2(M(\P),L(\P))$ is trivial by the Cellular Approximation Theorem \cite[Theorem~4.8]{HatcherAT}. In this survey we focus on the stronger condition $\pi_2(L(\P),K(G,1)) = 0$ of Definition~\ref{Definition:Aspherical} in order to simplify the narrative and to emphasize relativization of topological asphericity for two-complexes, as in Theorem~\ref{Theorem:Theory}(a).
\end{re}

\subsection{Weak asphericity}\label{subsec:weak}

An alternative (weaker) asphericity concept that is not tied directly to the $\pi_1$-embedding question was introduced in~\cite{BBP} and studied further in~\cite{Davidson09},\cite{Ahmad},\cite{SKK6},\cite{SKK}. For the cellular model $(L,K) = (L(\P),K(G,1))$ of a relative presentation $\mathcal{P} = \pres{G,\mathbf{x}}{\mathbf{r}}$, we can assume that the two-skeleton $K^2$ is modeled on a presentation $\mathcal{Q}$ for $G$ so that the two-skeleton $L^2$ is modeled on a lifted ordinary presentation $\widehat{\mathcal{P}}$ for $G(\mathcal{P})$ \cite[Section~1.6]{BP}. The second homotopy modules $\pi_2K^2, \pi_2L^2$ have sometimes been denoted by $\pi_2\mathcal{Q}, \pi_2\widehat{\P}$ (e.g.~\cite{BPpi2gen}).

\begin{de}[See \cite{BBP},\cite{Davidson09}]\label{Definition:Weak} The relative group presentation $\mathcal{P}$ is \em weakly aspherical \em if the second homotopy module $\pi_2L^2 = \pi_2\widehat{\P}$ is generated as a $\Z G(\P)$-module by the image of the inclusion-induced homomorphism $\pi_2K^2 \ra \pi_2L^2$, or equivalently if the inclusion-induced homomorphism $\pi_2(K^2 \cup L^1) \ra \pi_2L^2$ is surjective.
\end{de}

We note that the term `weakly aspherical' was used with a related but slightly different meaning in~\cite{BP} -- see Remark~\ref{rem:genremarks}(a).

\begin{lemma}[{{See \textbf{\cite[Lemma~1.7]{BP},\cite[Theorem~4.2]{HowieHighPower}}}}]\label{Lemma:Weak}
Let $\mathcal{P} = \pres{G,\mathbf{x}}{\mathbf{r}}$ be a relative presentation and let $N$ be the kernel of the natural homomorphism $G \ra G(\P)$. Then:
\begin{enumerate}
\item[(a)] $\P$ is weakly aspherical if and only if $\pi_2L(\P) = 0$.
\item[(b)] If $\P$ is aspherical, then $\P$ is weakly aspherical.
\item[(c)] $L(\P)$ is a $K(G(\P),1)$-complex if and only if $\P$ is weakly aspherical and $H_k(N,\Z) = 0$ for $k \geq 3$.
\item[(d)] If $G$ embeds in $G(\P)$, then $\P$ is weakly aspherical if and only if $\P$ is aspherical.
\end{enumerate}
\end{lemma}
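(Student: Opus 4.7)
The plan is to reduce all four parts to a careful analysis of the cellular pair $(L,K) = (L(\P),K(G,1))$, exploiting the structural fact that every cell of $L - K$ has dimension at most two while every cell of $L$ of dimension at least three lies inside $K$. Part (a) is the central technical input; parts (b)--(d) then follow from (a) together with the long exact homotopy sequence of $(L,K)$, a covering-space homology computation on $\widetilde{L}$, and Lemma~\ref{Lemma:KG1}.

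For (a), I would construct $L$ from its two-skeleton $L^2$ by attaching the cells of $K$ of dimension $\geq 3$. Cells of dimension $\geq 4$ leave $\pi_2$ unchanged, and attaching the three-cells of $K$ quotients $\pi_2 L^2$ by the $\Z G(\P)$-submodule generated by the images of their attaching maps. Because $K = K(G,1)$ has $\pi_2 K^3 = 0$, these attaching maps generate $\pi_2 K^2$ as a $\Z G$-module, so their images generate the same $\Z G(\P)$-submodule of $\pi_2 L^2$ as does the whole image of $\pi_2 K^2 \ra \pi_2 L^2$. Hence $\pi_2 L = 0$ precisely when this image generates $\pi_2 L^2$ as a $\Z G(\P)$-module, which is weak asphericity. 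Part (b) then follows immediately: from $\pi_2 K \ra \pi_2 L \ra \pi_2(L,K)$ in the long exact sequence and $\pi_2 K = 0$, asphericity forces $\pi_2 L = 0$, hence weak asphericity by (a).

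For (c), let $p:\widetilde{L} \ra L$ be the universal covering projection. Each component of $p^{-1}(K)$ is a covering space of $K$ with fundamental group $\ker(\pi_1 K \ra \pi_1 L) = N$ and is therefore itself a $K(N,1)$, so $H_k(p^{-1}(K))$ is a direct sum of copies of $H_k(N,\Z)$. Since $\widetilde{L} - p^{-1}(K)$ has no cells in dimensions $\geq 3$, the relative groups $H_n(\widetilde{L},p^{-1}(K))$ vanish for $n \geq 3$, and the long exact homology sequence of the pair yields isomorphisms $H_n\widetilde{L} \cong H_n(p^{-1}(K))$ for $n \geq 3$. By the Hurewicz theorem, $L$ is a $K(G(\P),1)$ if and only if $\pi_2 L = 0$ and $H_n \widetilde L = 0$ for $n \geq 3$; combining with (a), this is equivalent to $\P$ being weakly aspherical together with $H_k(N,\Z) = 0$ for $k \geq 3$.

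Finally, (d) is immediate from (c) and Lemma~\ref{Lemma:KG1}: when $G$ embeds in $G(\P)$ the kernel $N$ is trivial, so the homological condition in (c) is automatic and weak asphericity is equivalent to $L$ being a $K(G(\P),1)$, which by Lemma~\ref{Lemma:KG1} coincides with asphericity of $\P$. The main obstacle I anticipate is the bookkeeping in (a): one must track the way the $\Z G$-module structure on $\pi_2 K^2$ pushes forward along $G \ra G(\P)$ to produce a $\Z G(\P)$-submodule of $\pi_2 L^2$, and verify that the two formulations of weak asphericity given in Definition~\ref{Definition:Weak} agree, which implicitly uses the structure of $\pi_2(K^2 \cup L^1)$ as a $\Z(G \ast F)$-module induced from $\pi_2 K^2$.
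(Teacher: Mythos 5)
Your argument is correct. Parts (b), (c) and (d) coincide with the paper's treatment: (b) is the long exact sequence of $(L,K)$ with $\pi_2K=0$, and (c), (d) come from the observation that each component of $p^{-1}(K)$ is a $K(N,1)$, so that $H_k\widetilde{L}\cong H_k(p^{-1}(K))$ is a sum of copies of $H_k(N,\Z)$ for $k\geq 3$, followed by Hurewicz and Lemma~\ref{Lemma:KG1}. For part (a), however, you take a genuinely different route. The paper runs a ladder of long exact homotopy sequences for the pairs $(L^2,K^2\cup L^1)$ and $(L,K\cup L^1)$, identifies $\pi_2(L^2,K^2\cup L^1)\ra\pi_2(L,K\cup L^1)$ via Whitehead's free crossed module description of $\pi_2$ of a two-cell attachment, uses cellular approximation for the surjectivity of $\pi_2L^2\ra\pi_2L$, and chases the diagram to arrive at the \emph{surjectivity} formulation of weak asphericity ($\pi_2(K^2\cup L^1)\ra\pi_2L^2$ onto). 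You instead filter $L$ by attaching the cells of $K$ of dimension $\geq 3$ to $L^2$ and compute $\pi_2L$ as the quotient of $\pi_2L^2$ by the $\Z G(\P)$-submodule generated by the image of $\pi_2K^2$, the key point being that the attaching maps of the three-cells of $K$ generate $\pi_2K^2$ over $\Z G$ because $\pi_2K^3\cong\pi_2K=0$; this lands directly on the \emph{module-generation} formulation of Definition~\ref{Definition:Weak} and avoids the crossed-module machinery, at the cost of the equivariance bookkeeping you describe. Each approach directly proves one of the two formulations in Definition~\ref{Definition:Weak} and leans on their equivalence for the other; you are right that this equivalence rests on the induced-module isomorphism $\pi_2(K^2\cup L^1)\cong\Z(G\ast F)\otimes_{\Z G}\pi_2K^2$, which is worth recording explicitly if your version of (a) is to stand alone.
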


\begin{proof}
Letting $(L,K) = (L(\P),K(G,1))$, the statement (a) follows from a diagram chase in the following ladder of long exact homotopy sequences.
\[
\minCDarrowwidth25pt\begin{CD}
{\pi_2(K^2 \cup L^1)} @>>>{\pi_2L^2} @>>> {\pi_2(L^2,K^2 \cup L^1)} @>>> {\pi_1(K^2 \cup L^1)} @>>> {\pi_1L^2} \\
@. @ VVV @ V{\cong}VV @ V{\cong}VV @V{\cong}VV\\
{0} @>>>{\pi_2L} @>>> {\pi_2(L,K \cup L^1)} @>>> {\pi_1(K \cup L^1)} @>>> {\pi_1L} \\
\end{CD}
\]
That the inclusion-induced map $\pi_2(L^2,K^2 \cup L^1) \ra \pi_2(L,K \cup L^1)$ is an isomorphism follows from Whitehead's description of the relative group $\pi_2(Y,X)$ as a free crossed $\pi_1X$-module when $Y$ is obtained from $X$ by attaching two-cells \cite[Theorem~2.9]{AJSAlgTop},\cite{JHCCH2}. Note that the inclusion-induced homomorphism $\pi_2L^2 \ra \pi_2L$ is surjective by the Cellular Approximation Theorem~\cite[Theorem~4.8]{HatcherAT}. With Lemma~ \ref{Lemma:KG1}, the statement (b) is now obvious. Looking more closely at the proof of Lemma~\ref{Lemma:Covering}, if $p:\widetilde{L} \ra L$ is the universal covering projection, then each component of the pre-image $p^{-1}(K)$ has fundamental group isomorphic to $N$ and so is a $K(N,1)$-complex because $K$ is aspherical. Thus $H_k\widetilde{L} \cong H_k(p^{-1}(K))$ is a direct sum of copies of $H_k(N,\Z)$ for $k \geq 3$; the statements (c) and (d) follow at once.
\end{proof}

\begin{re}\label{rem:weakrelators} The proofs of Lemma~\ref{lemma:relatorconditions}(b),(c) used only the fact that $\pi_2L(\P) = 0$, so those results also apply to weakly aspherical relative presentations by \linebreak Lemma~\ref{Lemma:Weak}(a). However, Lemma~\ref{lemma:relatorconditions}(a) does not hold for all weakly aspherical presentations. For example, if $G = \sgp{g} \cong \Z$, then the relative presentation $\P = \pres{G,\emptyset}{g}$ is weakly aspherical because $L(\P)$ is a disc. However $\P$ is non-aspherical because the natural homomorphism $G \ra G(\P)$ is not injective. Thus, asphericity and weak asphericity are distinct concepts.
\end{re}

The next example also distinguishes asphericity from weak asphericity.

\begin{ex}[Weakly aspherical but non-aspherical.]\label{ex:WeakAspNotAsp}
Let $G = \gpres{g}\cong\Z$ and let $\mathcal{P} = \pres{G,x}{gxg^{-1}x^{-2}, xgx^{-1}g^{-2}}$. Here $G(\mathcal{P})$ is the trivial group and $L(\mathcal{P})$ is a contractible two-complex. So $\mathcal{P}$ is weakly aspherical by Lemma~\ref{Lemma:Weak}(a). Since $G\ra G(\mathcal{P})$ is not injective, $\P$ is non-aspherical by Lemma~\ref{Lemma:KG1}.
\end{ex}

\begin{re}\label{rem:KervaireQ}
An open question from the 1950's asks if there is a one-relator relative presentation $\P = \pres{G,x}{x^{e_1}g_1\ldots x^{e_\gwell}g_\gwell}$ with $G(\P) = 1$ and $G \neq 1$. This question has been attributed to Kervaire and to Laudenbach, see \cite[Chapter I.6]{LyndonSchupp} and \cite[page 403]{MKS}. Freedman \cite{Freedman78} noted that in any such example the exponent sum $\sum_{i=1}^l e_i$ is equal to $\pm 1$ and that the group $G$ is perfect and has no proper (normal) subgroups of finite index (by a result of Gerstenhaber and Rothaus \cite{GR}). Klyachko proved that the group $G$ must have torsion \cite{Kly} and can be chosen to be simple \cite{Kly05}. Arguing as in Lemma~\ref{Lemma:Weak}(c), if we further assume that $L(\P)$ is contractible, then $G$ is acyclic.
\end{re}

\subsection{Preliminary reductions}

If the natural homomorphism $G \ra G(\P)$ is injective, then the long exact homotopy sequence for the pair $(L,K) = (L(\P),K(G,1))$ shows that $\pi_2L \cong \pi_2(L,K)$, which means that elements of the relative homotopy group are represented by spherical maps $S^2 \ra L$. The implicit involvement of both $\pi_1$ and $\pi_2$ in Definition~\ref{Definition:Aspherical} (i.e.\ Lemma~\ref{Lemma:KG1}) also means that we can reduce to the case where the coefficient group is generated by the occurrences of coefficients in the relative relators.

\begin{lemma}\label{Lemma:Coefficient} Given a relative presentation $\mathcal{P} = \pres{G,\mathbf{x}}{\mathbf{r}}$, if $H$ is any subgroup of $G$ for which the set of relators $\mathbf{r} \subseteq G \ast F$ is contained in the free product $H\ast F$, then $\mathcal{P}$ is aspherical if and only if $\mathcal{P}_0 = \pres{H,\mathbf{x}}{\mathbf{r}}$ is aspherical.
\end{lemma}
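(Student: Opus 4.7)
My plan is to use a pushout decomposition of the cellular model. Choosing $K(G,1)$ so that $K(H,1)$ sits inside as a subcomplex (standard), the hypothesis $\mathbf{r}\subseteq H\ast F$ means the attaching loops of the relator two-cells factor through $K(H,1)\vee\bigvee_\mathbf{x} S^1_x$; hence $L(\P_0)\subseteq L(\P)$ and $L(\P)=K(G,1)\cup_{K(H,1)}L(\P_0)$ as a pushout of cofibrations. Van Kampen's theorem then identifies $G(\P)$ with the amalgamated free product $G\ast_H G(\P_0)$, taken along the natural inclusions.

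Before invoking topological machinery, I pin down $\pi_1$-injectivity. In the direction ``$\P_0$ aspherical $\Rightarrow \P$ aspherical,'' Lemma~\ref{Lemma:KG1} immediately supplies $H\hookrightarrow G(\P_0)$. In the reverse direction, I extract the same embedding by observing that the composite $H\to G(\P_0)\to G(\P)$ agrees with the injection $H\hookrightarrow G\hookrightarrow G(\P)$ (the second arrow injective by asphericity of $\P$); hence $H\to G(\P_0)$ is injective. Either way, both legs of the amalgamation $G\ast_H G(\P_0)$ are injective, so by the standard embedding theorem for amalgamated free products, both $G$ and $G(\P_0)$ embed in $G(\P)$.

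With these embeddings, I pass to the universal cover $p:\widetilde{L(\P)}\to L(\P)$ and apply Mayer--Vietoris to the decomposition $\widetilde{L(\P)}=p^{-1}(K(G,1))\cup p^{-1}(L(\P_0))$ with intersection $p^{-1}(K(H,1))$. The components of the first and third preimages are contractible universal covers of $K(G,1)$ and $K(H,1)$. In the direction ``$\P_0$ aspherical $\Rightarrow \P$ aspherical,'' the components of the second are copies of the contractible $\widetilde{L(\P_0)}$, so Mayer--Vietoris together with the Hurewicz and Whitehead theorems yield contractibility of $\widetilde{L(\P)}$, and Lemma~\ref{Lemma:KG1} finishes. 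In the reverse direction, $\widetilde{L(\P)}$ is already contractible, so Mayer--Vietoris forces $H_\ast\widetilde{L(\P_0)}=0$ in positive degrees; Hurewicz then gives $\pi_2L(\P_0)=0$, after which Lemma~\ref{Lemma:Covering} applied to $(L(\P_0),K(H,1))$ combined with Lemma~\ref{Lemma:KG1} deliver asphericity of $\P_0$. The main obstacle is the reverse direction: since $G(\P_0)\hookrightarrow G(\P)$ is not available a priori, it must be bootstrapped from $H\hookrightarrow G(\P_0)$ before the Mayer--Vietoris argument can be deployed.
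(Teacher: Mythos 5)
Your proof is correct and follows essentially the same route as the paper's: the pushout decomposition $L(\P)=K(G,1)\cup_{K(H,1)}L(\P_0)$, the amalgamated free product $G(\P)\cong G\ast_H G(\P_0)$, and a Mayer--Vietoris computation on the universal cover identifying $H_2\widetilde{L(\P)}$ with a direct sum of copies of $H_2\widetilde{L(\P_0)}$. The only difference is cosmetic: where the paper cites \cite[page~89]{Howie83} for the injectivity of the maps $H\to G(\P_0)$, $G\to G(\P)$ and $G(\P_0)\to G(\P)$, you bootstrap these directly from Lemma~\ref{Lemma:KG1} and the commutativity of the pushout square, which is a perfectly valid (and self-contained) substitute.
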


\begin{proof}
Since $H \leq G$, we can construct a $K(G,1)$-complex $K$ that contains a $K(H,1)$-complex $K_0$ as a subcomplex and then the cellular models $(L,K)$ and $(L_0,K_0)$ are such that $L = K \cup_{K_0} L_0$. Since the inclusion of $K_0$ in $K$ induces the monomorphism $H \ra G$, it follows that if either $\mathcal{P}_0$ or $\mathcal{P}$ is aspherical, then the natural maps $H \ra G(\P_0)$ and $G \ra G(\P)$ are both injective and there is an amalgamated free product decomposition $G(\P) = G \ast_{H} G(\P_0)$; see e.g.\ \cite[page 89]{Howie83}. We may therefore assume that $H \ra G(\P_0)$, $G \ra G(\P)$, and $G(\P_0) \ra G(\P)$ are all injective. Letting $p:\widetilde{L} \ra L$ be the universal covering projection, the Mayer-Vietoris sequence for the union $\widetilde{L} = p^{-1}(K) \cup_{p^{-1}(K_0)} p^{-1}(L_0)$ shows that $\pi_2L \cong H_2\widetilde{L} \cong H_2 (p^{-1}(L_0))$ is a direct sum of copies of $H_2\widetilde{L_0} \cong \pi_2L_0$ and so the result follows from Lemma~\ref{Lemma:Weak}. \end{proof}

For one-relator relative presentations $\P = \pres{G,x}{r}$, many asphericity classifications can be reduced to case where the exponents occurring on $x$ are relatively prime.

\begin{lemma}\label{lem:relativelyprime} Suppose that $|d| \geq 2$ and consider the relative presentations $\P = \pres{G,x}{r}$ and $\mathcal{V} = \pres{G,y}{s}$ where $r = x^{de_1}g_1 \ldots  x^{de_\ell}g_\ell$ and $s = y^{e_1}g_1 \ldots  y^{e_\ell}g_\ell$. Then $\P$ is aspherical if and only if $\mathcal{V}$ is aspherical and the element $y$ has infinite order in $G(\mathcal{V})$.\footnote{See Example~\ref{ex:finiteorderx} and Question~\ref{qu:isomorphism} below.}
\end{lemma}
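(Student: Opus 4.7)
The plan is to study the $d$-fold cyclic cover $\hat L\ra L(\P)$ corresponding to the homomorphism $G(\P)\ra\Z_d$ that sends $x\mapsto 1$ and is trivial on $G$; this is well-defined because each exponent of $x$ in $r$ is divisible by $d$. Unwinding the cover gives $\hat L = \bigcup_{i=0}^{d-1} L(\mathcal{V})_i$, where each $L(\mathcal{V})_i\cong L(\mathcal{V})$ is the union of the $i$-th sheet of $K(G,1)$ with a single circle $C$ (the degree-$d$ lift of $S^1_x$) and a lifted 2-cell attached via $y^{e_1}g_1\cdots y^{e_\ell}g_\ell=s$, with $y$ the class of $C$; the intersections $L(\mathcal{V})_i\cap L(\mathcal{V})_j$ all equal $C$. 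Since covering projections are $\pi_n$-isomorphisms for $n\geq 2$, $L(\P)$ is aspherical if and only if $\hat L$ is aspherical, and $G\hookrightarrow G(\P)$ if and only if $G\hookrightarrow\pi_1\hat L$.

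For the backward direction, suppose $\mathcal{V}$ is aspherical and $y$ has infinite order in $G(\mathcal{V})$. Each $L(\mathcal{V})_i$ is then a $K(G(\mathcal{V}),1)$ and the inclusion $C\hookrightarrow L(\mathcal{V})_i$ is $\pi_1$-injective, so $\hat L$ is a graph of aspherical spaces with infinite-cyclic edge groups, realising the iterated amalgam $\pi_1\hat L\cong G(\mathcal{V})_0*_{\sgp{y}}\cdots*_{\sgp{y}}G(\mathcal{V})_{d-1}$. The standard Bass-Serre argument gives $\hat L$ aspherical; since $G\hookrightarrow G(\mathcal{V})\hookrightarrow\pi_1\hat L$, Lemma~\ref{Lemma:KG1} yields $\P$ aspherical.

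For the forward direction, suppose $\P$ is aspherical, so $\hat L$ is aspherical. If $y^n=1$ in $G(\mathcal{V})$ for some least $n\geq 1$, then the loop $y^n$ is nullhomotopic in each $L(\mathcal{V})_i\subseteq\hat L$ via a disc $\alpha_i:D^2\ra L(\mathcal{V})_i$, and gluing $\alpha_0$ to $\bar{\alpha}_1$ along their common boundary produces a spherical map $S^2\ra\hat L$. A cellular analysis shows this sphere is non-trivial in $\pi_2\hat L$: any 3-chain bounding it would have to split as a sum of 3-chains supported inside single sheets (since all 3-cells of $\hat L$ are inherited from within-sheet cells of $K(G,1)$), and no such decomposition can produce the required cross-sheet cancellation along $C$. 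This contradicts $\pi_2\hat L=0$, forcing $y$ to have infinite order. Then the graph-of-groups description of $\pi_1\hat L$ makes $\pi_1L(\mathcal{V})_0\hookrightarrow\pi_1\hat L$ injective; Lemma~\ref{Lemma:Covering} applied to $(\hat L,L(\mathcal{V})_0)$ gives $L(\mathcal{V})_0\cong L(\mathcal{V})$ aspherical, and since $G\hookrightarrow G(\P)$ factors through $G(\mathcal{V})$, Lemma~\ref{Lemma:KG1} finishes the argument. The main obstacle is the cross-sheet non-triviality step in the forward direction: the hypothesis $|d|\geq 2$ enters precisely to ensure the cover has multiple distinct sheets for the sphere to straddle, and verifying that no 3-chain can span sheets through $C$ requires a careful bookkeeping of which cells of $\hat L$ lie in which sheet.
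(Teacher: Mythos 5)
Your covering-space strategy is genuinely different from the paper's proof, which instead introduces $y$ as a new generator with $y=x^d$, identifies $G(\P)$ with the root adjunction $G(\mathcal{V})\ast_{y=x^d}\sgp{x}$, deduces the forward direction from Theorem~\ref{Theorem:Theory}(c) (the element $x$ cannot be conjugated into $G(\mathcal{V})$ in this amalgam, hence has infinite order), and handles the converse with the weight test applied to $\pres{G(\mathcal{V}),x}{y=x^d}$. Your description of the $d$-fold cover $\hat L=\bigcup_i L(\mathcal{V})_i$ glued along the single circle $C$ is correct, and your backward direction goes through: it is the standard asphericity-of-unions argument, essentially the Mayer--Vietoris computation the paper itself uses to prove Lemma~\ref{Lemma:Coefficient}.

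The forward direction, however, has a genuine gap at its central step. The assertion that ``no such decomposition can produce the required cross-sheet cancellation along $C$'' is not an argument: a priori each nullhomotopy $\alpha_i$ of $y^n$ could have zero net degree on the relator two-cell $c^2_{s,i}$, in which case the cycle $[\alpha_0]-[\alpha_1]$ would have no component on those cells at all and would split as a sum of cycles carried entirely by $K(G,1)_0$ and $K(G,1)_1$ -- and nothing you have said rules out those pieces bounding. What actually saves the argument is a coefficient count on the edges of $C$: the chain $\partial_2[\alpha_i]$ carries each edge of $C$ with coefficient $n\geq 1$; the only two-cells whose boundaries meet the edges of $C$ are the $c^2_{s,j}$, and $\partial_2 c^2_{s,i}$ carries each such edge with coefficient $e_1+\cdots+e_\ell$; hence the coefficient $\lambda_i$ of $c^2_{s,i}$ in $[\alpha_i]$ satisfies $\lambda_i(e_1+\cdots+e_\ell)=n$, forcing $\lambda_i\neq 0$. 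Since every three-cell of $\hat L$ lies in a single sheet $K(G,1)_j$, the image of $\partial_3$ has zero component on each $c^2_{s,j}$, so the class of your sphere is nonzero in $H_2\hat L$ and hence nonzero in $\pi_2\hat L$ via the Hurewicz map. This is the computation your sketch needs but does not supply. A second, smaller error: at the end you apply Lemma~\ref{Lemma:Covering} to the pair $(\hat L, L(\mathcal{V})_0)$, but that lemma requires the complement to have no cells in dimensions three and up, whereas $\hat L - L(\mathcal{V})_0$ contains the full complexes $K(G,1)_j$ for $j\neq 0$; you need instead the Mayer--Vietoris argument on the universal cover of $\hat L$ (as in the paper's proof of Lemma~\ref{Lemma:Coefficient}), using the $\pi_1$-injectivity of $C$ that the infinite order of $y$ now provides.
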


\begin{proof}
There are homotopy equivalences
\begin{eqnarray*}
L(\P) &=& K(G,1) \vee S^1_x \cup c^2_r\\
&\simeq& K(G,1) \vee S^1_x \vee S^1_y \cup c^2_r \cup c^2_{y=x^d}\\
&\simeq& K(G,1) \vee S^1_x \vee S^1_y \cup c^2_s \cup c^2_{y=x^d}\\
&=& L(\mathcal{V}) \vee S^1_x \cup c^2_{y=x^d}
\end{eqnarray*}
showing that $G(\mathcal{P}) = \pi_1L(\mathcal{P})$ is obtained from $G(\mathcal{V}) = \pi_1L(\mathcal{V})$ by adjoining a $d$th root of $y \in G(\mathcal{V})$ \cite[Theorem 5.1]{NeumannRoot}. In particular, $G(\P) \cong G(\mathcal{N})$ where $\mathcal{N}$ is the one-relator relative presentation $\mathcal{N} = \pres{G(\mathcal{V}),x}{y=x^d}$ and so we have an amalgamated free product decomposition $G(\P) \cong G(\mathcal{N}) \cong G(\mathcal{V}) \ast_{y=x^d} \sgp{x}$. Thus we can assume that $G$ embeds in both $G(\mathcal{V})$ and in $G(\P)$.

Suppose first that $\P$ is aspherical. It follows from Lemmas \ref{Lemma:Covering} and \ref{Lemma:KG1} that both $\mathcal{V}$ and $\mathcal{N}$ are aspherical. Further, since $|d| \geq 2$, the element $x$ is not conjugate in $G(\P) \cong G(\mathcal{N})$ to an element of $G(\mathcal{V})$, so Theorem~\ref{Theorem:Theory}(c) implies that $x$ has infinite order in $G(\mathcal{N})$ and so $y$ has infinite order in $G(\mathcal{V})$.

Conversely, if $\mathcal{V}$ is aspherical and $y$ has infinite order in $G(\mathcal{V})$, then we can prove that $\mathcal{N}$ is aspherical using the weight test \cite[Theorem 2.1]{BP} (see Theorems \ref{thm:DRAspherical} and \ref{Theorem:WeightTest} below) applied to the star graph $\mathcal{N}^{\mathrm{st}}$ of $\mathcal{N}$: apply the weight $-1$ to the edge labeled $y$ and apply the weight $1$ to all remaining edges in the star graph. Thus,
$$
L(\P) \simeq L(\mathcal{V}) \vee S^1_x \cup c^2_{y=x^d} = K(G(\mathcal{V}),1) \vee S^1_x \cup c^2_{y=x^d} = L(\mathcal{N})
$$
and so $\P$ is aspherical.
\end{proof}

In the case of \em balanced \em  relative presentations (i.e.\ relative presentations $\mathcal{P} = \pres{G,\mathbf{x}}{\mathbf{r}}$ with $|\mathbf{x}|=|\mathbf{r}|$) a simple instance of asphericity occurs when $G = G(\P)$.

\begin{lemma}\label{Lemma:G=H} Consider a balanced relative presentation $\mathcal{P} = \pres{G,\mathbf{x}}{\mathbf{r}}$ for a group $G(\P)$ where $|\mathbf{x}| = |\mathbf{r}| < \infty$. If the natural homomorphism $G \ra G(\mathcal{P})$ is an isomorphism, then $\mathcal{P}$ is aspherical.
\end{lemma}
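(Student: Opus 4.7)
The plan is to use Lemmas~\ref{Lemma:KG1} and \ref{Lemma:Covering} to reduce asphericity of $\mathcal{P}$ to the vanishing of $\pi_2 L(\mathcal{P})$, and then to compute this via the equivariant cellular chain complex of the universal cover. Setting $(L,K) = (L(\mathcal{P}), K(G,1))$, the hypothesis gives that the inclusion-induced homomorphism $\pi_1 K \to \pi_1 L$, which is identified with $G \to G(\mathcal{P})$, is an isomorphism. In particular it is injective, so by Lemma~\ref{Lemma:KG1} it suffices to show that $L$ is a $K(G(\mathcal{P}),1)$-complex, and by Lemma~\ref{Lemma:Covering} (with $K$ aspherical) this in turn reduces to $\pi_2 L = 0$.

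For this, let $p \colon \widetilde L \to L$ be the universal covering projection. Since $\pi_1 K \to \pi_1 L$ is an isomorphism, the preimage $p^{-1}(K)$ is a copy of the universal cover of $K$, hence is contractible. The long exact sequence of the pair $(\widetilde L, p^{-1}(K))$ together with Hurewicz then yields $\pi_2 L \cong H_2 \widetilde L \cong H_2(\widetilde L, p^{-1}(K))$. In the balanced case with $n = |\mathbf{x}| = |\mathbf{r}|$, the relative cellular chain complex is concentrated in dimensions one and two and, identifying $G$ with $G(\mathcal{P})$ via the hypothesis, takes the form $0 \to \Z G^n \xrightarrow{\partial_2} \Z G^n \to 0$ with $n$ free generators in each dimension corresponding to the relative $1$- and $2$-cells of $L$ over $K$. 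Simple connectedness of $\widetilde L$ forces $H_1(\widetilde L, p^{-1}(K)) = 0$, so $\partial_2$ is surjective.

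The remaining task is to conclude that $\partial_2$ is injective, or equivalently that a surjective $\Z G$-linear endomorphism of $\Z G^n$ is an isomorphism. Splitting the relevant short exact sequence shows $\ker \partial_2$ is a finitely generated projective module with $\ker\partial_2 \oplus \Z G^n \cong \Z G^n$, and the issue is ruling out the possibility that it is stably trivial but nonzero. This is precisely the stably finite property of $\Z G$, which is the main obstacle: it holds for every group $G$ as a consequence of Kaplansky's theorem that $\mathbb{C}G$ is directly finite (via the canonical trace on its $L^2$-completion), passed to matrix algebras and restricted along the inclusion $\Z G \hookrightarrow \mathbb{C}G$. Once this functional-analytic input is available, $\ker \partial_2 = 0$, whence $\pi_2 L = 0$ and the lemma follows.
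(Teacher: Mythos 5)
Your proof is correct and follows essentially the same route as the paper's: both reduce via Lemmas~\ref{Lemma:KG1} and \ref{Lemma:Covering} to showing that the boundary map $\partial_2\colon \Z G(\P)^n \to \Z G(\P)^n$ in the (relative) cellular chain complex of the universal cover is a surjection, hence an isomorphism by Kaplansky's direct/stable finiteness theorem for group rings (the paper cites Kaplansky and Montgomery for exactly the ``right inverse implies left inverse'' step you justify via the $L^2$-trace). The only cosmetic difference is that the paper collapses $p^{-1}(K)$ to a point and computes $H_2$ of the quotient complex $X$, whereas you work directly with the relative chain complex of $(\widetilde L, p^{-1}(K))$ --- these are the same computation.
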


\begin{proof}
Let $(L,K)$ be the cellular model of $\mathcal{P}$ where $K = K(G,1)$. Assuming that $G \ra G(\mathcal{P})$ is an isomorphism, we show that $L$ is aspherical, so $\pi_2(L,K) \cong \pi_2L = 0$ and so $\mathcal{P}$ is aspherical.

Let $p: \widetilde{L} \ra L$ be the universal covering projection. The pre-image $p^{-1}(K)$ is connected (since the inclusion-induced homomorphism $G = \pi_1K \ra \pi_1L = G(\mathcal{P})$ is surjective) and simply connected (since $\pi_1K \ra \pi_1L$ is injective), so $p^{-1}(K) = \widetilde{K}$ is contractible. We therefore have a homotopy equivalence $\widetilde{L} \simeq \widetilde{L}/\widetilde{K} = X$ where $X$ is obtained by identifying $p^{-1}(K)$ to a zero-cell $c^0$. We show that $X$, and hence $\widetilde{L}$, is contractible, so that $L$ is aspherical. Note that $X$ is simply connected and two-dimensional, so it suffices to show that $H_2X = 0$.

The group of deck transformations $\mathrm{Aut}(p) \cong G(\mathcal{P})$ freely permutes the cells of $\widetilde{L}$ leaving the subcomplex $p^{-1}(K) = \widetilde{K}$ invariant. Thus $G(\P)$ acts freely and cellularly on $X-c^0$ and so the cellular chain complex of $X$ consists of $\Z G(\P)$-modules
$$
C_2 X \stackrel{\partial_2}{\ra} C_1 X \stackrel{\partial_1}{\ra} C_0 X
$$
that are free of rank $n = |\mathbf{x}| = |\mathbf{r}|$ in dimensions one and two and where $C_0 X = \Z$ has trivial $G(\mathcal{P})$-action. The boundary operator $\partial_1$ is trivial because $X$ has just a single zero-cell. The fact that $X \simeq \widetilde{L}$ is simply connected implies that the boundary operator $\partial_2: \Z G(\mathcal{P})^n \ra \Z G(\mathcal{P})^n$ is a surjective endomorphism of the free $\Z G(\P)$-module of rank $n$, which therefore has a right inverse: $\partial_2 \circ \sigma = 1$. A theorem of Kaplansky \cite{Kaplansky72},\cite{Montgomery69} then implies that $\sigma \circ \partial_2 = 1$ and so $H_2X = \ker \partial_2 = 0$.
\end{proof}

In connection with Theorem~\ref{Theorem:Theory}(c), we note that non-obvious torsion can occur in the aspherical setting.

\begin{ex}[Torsion and asphericity]\label{ex:finiteorderx}
The group defined by the relative presentation $\mathcal{J} = \pres{\pres{g}{g^4},x}{x^4gx^{-3}g^2}$ (denoted $J_4(1,-3)$ in~\cite{BW1}) is cyclic of order four and $g = x$ in $G(\mathcal{J})$. Thus $\mathcal{J}$ is aspherical by Lemma~\ref{Lemma:G=H}, even though $x$ is a finite order element.
\end{ex}

In light of this example we pose the following question.

\begin{qu}\label{qu:isomorphism}If $\P = \pres{G,x}{x^{e_1}g_1\cdots x^{e_\ell}g_\ell}$ is an aspherical one-relator relative presentation and $x$ determines an element of finite order in $G(\P)$, does it follow that $G \ra G(\P)$ is an isomorphism? Note that the exponent sum $\sum_{i=1}^\ell e_i$ must be equal to $\pm 1$, for otherwise the element $x$ lies outside the normal closure of the natural image of $G$ in $G(\P)$ so $x$ has infinite order by Theorem~\ref{Theorem:Theory}(c).\end{qu}

\subsection{Three-manifolds and zero divisors}\label{subsec:three}

A \em three-manifold group \em  is the fundamental group of a (PL) three-manifold, which need not be compact, closed, or orientable. A result of Ratcliffe \cite{Ratcliffe87} concerning rational Euler characteristics for groups \cite{Wall61} was exploited in \cite{HW16} to establish a connection between relative asphericity and the three-manifold status for the group $G(\P)$. We make this connection explicit in the following result.

\begin{thm}[{{Three-manifold criterion}}]\label{thm:3mfdcriterion}  Suppose that the relative presentation $\mathcal{P} = \pres{G,\mathbf{x}}{\mathbf{r}}$ is such that $G, \mathbf{x}$, and $\mathbf{r}$ are finite and the natural homomorphism $G \ra G(\P)$ is split by a retraction $\nu: G(\P) \ra G$. Assume that $G(\P)$ is a virtual three-manifold group. Then:
\begin{enumerate}
\item[(a)] If $\P$ is aspherical, then $|\mathbf{r}| \leq |\mathbf{x}|$; and
\item[(b)] If $|\mathbf{r}|=|\mathbf{x}|$ (i.e.\ $\mathcal{P}$ is balanced), then $\P$ is aspherical $\iff G \ra G(\P)$ is an isomorphism.
\end{enumerate}
\end{thm}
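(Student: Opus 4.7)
My plan is to compute the rational Euler characteristic of $G(\mathcal{P})$ two different ways and compare them using Ratcliffe's theorem for three-manifold groups. First I would extract structural consequences from the retraction: because $\nu$ is a one-sided inverse of the natural map $\iota:G\to G(\mathcal{P})$, the map $\iota$ is injective and $G(\mathcal{P})$ decomposes as a split extension $\ker(\nu)\rtimes G$, so $\ker\nu$ has finite index $|G|$ in $G(\mathcal{P})$.

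Under the asphericity hypothesis, Lemma~\ref{Lemma:KG1} says that $L(\mathcal{P})$ is a $K(G(\mathcal{P}),1)$-complex, so $\chi_{\mathbb{Q}}(G(\mathcal{P})) = \chi_{\mathbb{Q}}(L(\mathcal{P}))$. A routine Euler characteristic computation using $\chi(A\vee B)=\chi(A)+\chi(B)-1$, the vanishing of $\chi(S^1)$, the contribution $+1$ from each attached two-cell, and $\chi_{\mathbb{Q}}(G)=1/|G|$ for a finite group, would give
\[
\chi_{\mathbb{Q}}(G(\mathcal{P}))\;=\;\frac{1}{|G|}+|\mathbf{r}|-|\mathbf{x}|.
\]

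Next I would invoke Ratcliffe's theorem \cite{Ratcliffe87}: the rational Euler characteristic of an infinite three-manifold group on which $\chi_{\mathbb{Q}}$ is defined is non-positive, and by multiplicativity $\chi_{\mathbb{Q}}(H)=[G(\mathcal{P}):H]\,\chi_{\mathbb{Q}}(G(\mathcal{P}))$ applied to a finite-index three-manifold subgroup $H$, this non-positivity passes to the virtual three-manifold group $G(\mathcal{P})$ when $G(\mathcal{P})$ is infinite. If instead $G(\mathcal{P})$ is finite, then $\chi_{\mathbb{Q}}(G(\mathcal{P}))=1/|G(\mathcal{P})|$, and the injection $G\hookrightarrow G(\mathcal{P})$ gives $|G|\leq |G(\mathcal{P})|$, hence $1/|G(\mathcal{P})|\leq 1/|G|$.

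In either case $\chi_{\mathbb{Q}}(G(\mathcal{P}))\leq 1/|G|$, so the boxed formula forces $|\mathbf{r}|\leq|\mathbf{x}|$, establishing (a). For (b), observe that in the infinite branch Ratcliffe's non-positivity gives the strict inequality $|\mathbf{r}|<|\mathbf{x}|$, so the equality $|\mathbf{r}|=|\mathbf{x}|$ can only be realized in the finite branch, where it forces $|G(\mathcal{P})|=|G|$, i.e.\ $G\to G(\mathcal{P})$ is an isomorphism; the converse direction of (b) is immediate from Lemma~\ref{Lemma:G=H} applied to the balanced presentation. The main obstacle I anticipate is verifying that the finiteness data required by Ratcliffe's theorem is actually available in our setting, namely that a finite-index three-manifold subgroup of $G(\mathcal{P})$ has a well-defined rational Euler characteristic and that the multiplicativity formula applies to transfer the bound back to $G(\mathcal{P})$.
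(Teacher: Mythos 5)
Your proposal is correct and is essentially the paper's argument: an Euler characteristic computation on the cellular model combined with Ratcliffe's theorem, using the retraction $\nu$ to produce the finite-index subgroup $\Gamma=\ker\nu$ with $G(\P)\cong\Gamma\rtimes G$. Two points of comparison are worth recording. First, the ``main obstacle'' you flag at the end --- that the rational Euler characteristic of $G(\P)$ is actually defined and equals $1/|G|+|\mathbf{r}|-|\mathbf{x}|$ --- is precisely what the paper's proof supplies: asphericity makes $L(\P)$ aspherical, the regular $G$-cover $\overline{L}$ corresponding to $\Gamma$ has contractible preimage of $K(G,1)$ (because $\Gamma\cap G=1$), and collapsing that preimage yields a \emph{finite} aspherical two-complex $X\simeq K(\Gamma,1)$ with $\chi(\Gamma)=\chi(X)=1-|G|(|\mathbf{x}|-|\mathbf{r}|)$; your formula is this integer divided by $[G(\P):\Gamma]=|G|$, as in \cite{Wall61}. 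Second, rather than appealing to non-positivity of the rational Euler characteristic of infinite three-manifold groups (a statement stronger than what the paper cites, and whose sourcing you would need to check), the paper applies \cite[Theorem~2]{Ratcliffe87} directly to $\Gamma$, whose Euler characteristic is an integer: part (i) gives $\chi(\Gamma)\le 1$ and hence $|\mathbf{r}|\le|\mathbf{x}|$, and in the balanced case $\chi(\Gamma)=1$, so part (ii) gives $\Gamma=1$ and therefore $G\ra G(\P)$ is an isomorphism. If the non-positivity statement is not available in the form you invoke, your argument is easily repaired: for (a) the weaker bound $\chi_{\mathbb{Q}}(G(\P))\le 1$ already forces $|\mathbf{r}|\le|\mathbf{x}|$ by integrality of $|\mathbf{x}|-|\mathbf{r}|$, and for (b) the equality case of Ratcliffe applied to $\Gamma$ (where $\chi(\Gamma)=1$ exactly) replaces your finite/infinite dichotomy. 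Your handling of the converse of (b) via Lemma~\ref{Lemma:G=H} matches the paper's (implicit) justification.
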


\begin{proof}
Assume that $\P$ is aspherical. With notation as in the proof of Theorem~\ref{Theorem:Theory}(c), $p:\overline{L} \ra L$ is a regular covering projection with $\mathrm{Aut}(p) \cong G$ and the pre-image $p^{-1}(K(G,1)) = \widetilde{K(G,1)}$ is contractible. Thus $X = \overline{L}/\widetilde{K(G,1)} \simeq \overline{L} \simeq K(\Gamma,1)$ where $\Gamma = \ker \nu$. Asphericity of $X$ implies that $\chi(X) = 1-|G|(|\mathbf{x}|-|\mathbf{r}|)$ is an invariant of $\pi_1X \cong \pi_1\overline{L} \cong \Gamma$, denoted $\chi(\Gamma)$ as in \cite{Wall61}. If $G(\P)$, and hence $\Gamma$, is a virtual three-manifold group, then \cite[Theorem~2(i)]{Ratcliffe87} implies that $\chi(\Gamma) \leq 1$ so $|\mathbf{r}| \leq |\mathbf{x}|$, as in (a). If $|\mathbf{r}|=|\mathbf{x}|$, as in (b), then $\chi(\Gamma) = 1$, so \cite[Theorem~2(ii)]{Ratcliffe87} implies that $\Gamma = 1$, in which case $G \ra G(\P)$ is an isomorphism.
\end{proof}

\begin{ex}[Virtual three-manifold groups]\label{ex:3mfdgps}
(a) Let $G=\pres{g}{g^n}$ where $n\geq 6$ is even and let $\mathcal{P}=\pres{G,x}{x^2g^2x^{-1}g^{-1}}$.
It is well known (see, for example, \cite[page~196]{JBook2}) that $G(\mathcal{P})\cong F(2,n)\rtimes \Z_n$ where
\[F(2,n)=\pres{x_0,\ldots ,x_{n-1}}{x_ix_{i+1}=x_{i+2}\ (0\leq i\leq n-1)}\]
is a \em Fibonacci group. \em By~\cite{HKM} we have that $F(2,n)$ is an (infinite) three-manifold group, so $G(\mathcal{P})$ is a virtual three-manifold group and $G(\mathcal{P})\not \cong \pres{g}{g^n}$. Thus $\P$ is non-aspherical by Theorem~\ref{thm:3mfdcriterion}.

(b) Let $G=\pres{g}{g^n}$ where $n\geq 6$ and let $\mathcal{P}=\pres{G,x}{x^2gx^{-1}g}$. As in part~(a), we have that $G(\mathcal{P})\cong S(2,n)\rtimes \Z_n$ where
\[S(2,n)=\pres{x_0,\ldots ,x_{n-1}}{x_ix_{i+2}=x_{i+1}\ (0\leq i\leq n-1)}\]
is a \em Sieradski group. \em By~\cite{AJSSquash},\cite{CHK} we have that $S(2,n)$ is an (infinite) three-manifold group, so again we have that $\mathcal{P}$ is non-aspherical.
\end{ex}

Ivanov~\cite{Ivanov1999} discovered a direct connection between non-weak asphericity for a one-relator relative presentation $\P$ and the existence of zero divisors in the group ring $\Z G(\P)$. The following generalization of Ivanov's result is due to Leary~\cite{Leary2000}.

\begin{thm}[\cite{Leary2000}]\label{thm:zero divisor} Let $\P = \pres{G,x}{x^{e_1}g_1\cdots x^{e_\ell}g_\ell}$ be a one-relator relative presentation. Assume that the kernel of the natural homomorphism $G \ra G(\P)$ is acyclic and that the exponent sum $\sum_{i=1}^\ell e_i$ is nonzero. If $\P$ is non-weakly aspherical (in the sense of Definition~\ref{Definition:Weak}), then the group ring $\Z G(\P)$ contains a non-trivial zero divisor that is a $\Z$-linear combination of at most $\sum_{i=1}^\ell |e_i|$ elements of $G(\P)$.
\end{thm}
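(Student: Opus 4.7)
The plan is to translate non-weak asphericity into a chain-level statement about the universal cover $\widetilde{L}$ of $L = L(\P)$ and then read off an algebraic zero divisor in $\Z G(\P)$. By Lemma~\ref{Lemma:Weak}(a), non-weak asphericity of $\P$ is equivalent to $\pi_2 L \neq 0$, and since $\widetilde{L}$ is simply connected this is the same as $H_2\widetilde{L} \neq 0$. Let $p:\widetilde{L}\to L$ be the universal covering projection and let $N = \ker(G\to G(\P))$. Each component of $p^{-1}(K(G,1))$ covers $K(G,1)$ with fundamental group $N$, hence is a $K(N,1)$-complex, so acyclicity of $N$ gives $H_k(p^{-1}(K(G,1)),\Z)=0$ for every $k\geq 1$. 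The long exact homology sequence of the pair $(\widetilde{L},p^{-1}(K(G,1)))$ then yields a natural isomorphism $H_2\widetilde{L}\cong H_2(\widetilde{L},p^{-1}(K(G,1)))$.

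Next I would compute this relative group cellularly. The only cells of $L$ outside $K(G,1)$ are the 1-cell $S^1_x$ and the single 2-cell $c_r$, so the relative cellular chain complex collapses to a two-term complex
\[
0 \longrightarrow \Z G(\P)\cdot\widetilde{c}_r \xrightarrow{\partial_2} \Z G(\P)\cdot\widetilde{x} \longrightarrow 0
\]
of free $\Z G(\P)$-modules, and the attaching map of $c_r$ determines $\partial_2(\widetilde{c}_r) = \alpha\,\widetilde{x}$, where
\[
\alpha \;=\; \sum_{i=1}^{\ell} s_{i-1}\,\frac{\partial x^{e_i}}{\partial x} \;\in\; \Z G(\P),
\]
with $s_0=1$, $s_i = x^{e_1}g_1\cdots x^{e_i}g_i$ in $G(\P)$, and $\partial x^{e_i}/\partial x$ equal to the Fox polynomial $1+x+\cdots+x^{e_i-1}$ when $e_i>0$ (and $-(x^{-1}+\cdots+x^{e_i})$ when $e_i<0$). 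Consequently $H_2\widetilde{L}$ is identified with the annihilator $\{\beta\in\Z G(\P):\beta\alpha=0\}$ of $\alpha$.

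Finally, the hypotheses force $\alpha$ to be a genuine non-trivial zero divisor. Applying the augmentation $\varepsilon:\Z G(\P)\to\Z$ gives $\varepsilon(\alpha)=\sum_{i=1}^{\ell}e_i$, which is nonzero by assumption, so $\alpha \neq 0$. Non-weak asphericity supplies some $\beta \neq 0$ with $\beta\alpha=0$, exhibiting $\alpha$ as a non-trivial zero divisor. Expanding the Fox polynomials displays $\alpha$ as a $\Z$-linear combination of at most $\sum_{i=1}^{\ell}|e_i|$ elements of $G(\P)$, which is the required bound. The main obstacle I anticipate is technical bookkeeping rather than a genuinely hard step: one must pin down the left/right conventions for the $\Z G(\P)$-action on $C_*\widetilde{L}$ so that the Fox derivative really does describe the boundary, and one must verify uniformly across the signs of the $e_i$ both the term count $\sum|e_i|$ and the augmentation value $\sum e_i$.
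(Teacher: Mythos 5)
Your argument is correct, and it is essentially the proof given by Leary in the cited source \cite{Leary2000} (the paper itself states Theorem~\ref{thm:zero divisor} as a quoted result and supplies no proof): you convert non-weak asphericity via Lemma~\ref{Lemma:Weak}(a) and acyclicity of the kernel into $0 \neq H_2\widetilde{L} \cong H_2(\widetilde{L},p^{-1}(K))= \ker \partial_2$, identify $\partial_2$ with multiplication by the Fox derivative $\alpha = \partial r/\partial x \in \Z G(\P)$, and observe that $\varepsilon(\alpha)=\sum e_i \neq 0$ forces $\alpha \neq 0$ while its support has at most $\sum |e_i|$ elements. The only point requiring care --- which you already flag --- is fixing the module convention so that $H_2$ is precisely the (left) annihilator of $\alpha$, together with noting that $C_3(\widetilde{L},p^{-1}(K))=0$ because every cell of $L$ of dimension at least three lies in $K$; with that in place the argument is complete.
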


Ivanov conjectured that if $G$ is the fundamental group of an aspherical two-complex and the natural homomorphism $G \ra G(\P)$ is injective, then $\P$ is aspherical (in the sense of Definition~\ref{Definition:Aspherical}) if and only if $G(\P)$ is torsion-free~\cite{Ivanov1999}. A counterexample would require a torsion-free group whose integral group ring has a non-trivial zero divisor. That no such group exists is a longstanding open conjecture that is widely attributed to Kaplansky. Kim \cite{SKK6},\cite{SKK} has obtained results on the asphericity of certain one-relator relative presentations with torsion-free coefficient group.

\section{Methods from combinatorial geometry}\label{section:Methods}

A combinatorial geometric form of asphericity for ordinary presentations and their two-dimensional cellular models was described by Sieradski \cite[Section~4]{AJSColor}, who gave a practical `coloring test' for detecting the property. Gersten \cite{Gersten87} termed the property \textit{diagrammatic reducibility} (DR); he developed a flexible `weight test' generalizing the coloring test and applied the DR property to the study of equations over groups. For a relative presentation $\P = \pres{G,\mathbf{x}}{\mathbf{r}}$, any element of the kernel of the natural homomorphism $G \ra G(\P)$ leads to a map of pairs $(B^2,S^1) \ra (L(\P),K(G,1))$. In \cite[Lemma 1]{Howie83}, Howie applied topological methods to show that any such map of pairs gives rise to a `relative diagram' that emerges as the geometric dual to naturally occurring combinatorial structure called a `picture'. In this section we examine combinatorial geometric conditions on pictures that are sufficient to guarantee asphericity of $\P$ in the sense of Definition~\ref{Definition:Aspherical}.

\subsection{Diagrammatic reducibility}

A \em picture \em  $\mathbb{P}$ is a finite collection of pairwise disjoint discs $\{D_{1}, \ldots, D_{m}\}$ in the interior of a disc $D^{2}$, together with a finite collection of pairwise disjoint simple arcs $\{\alpha_{1},\ldots,\alpha _{n}\}$ embedded in the closure of $D^{2} - \bigcup_{i=1}^{m} D_{i}$ in such a way that each arc meets $\partial D^{2} \cup \bigcup_{i=1}^{m} D_{i}$ transversely in its end points \cite{BP}. The discs of a picture $\mathbb{P}$ are also called \em vertices \em  of $\mathbb{P}$ and for this reason $v_{i}$ or simply $v$ is often used in place of $D_{i}$. The \em boundary \em  of $\mathbb{P}$ is the circle $\partial D^{2}$, denoted by $\partial \mathbb{P}$. For $1\leq i \leq m$, the \em corners \em  of $D_{i}$ are the closures of the connected components of $\partial D_{i}- \bigcup_{j=1}^{n} \alpha_{j}$, where $\partial D_{i}$ is the boundary of $D_{i}$. The \em regions \em  $\Delta$ of $\mathbb{P}$ are the
closures of the connected components of $D^{2}-\big(\bigcup_{i=1}^{m} D_{i} \cup \bigcup_{j=1}^{n} \alpha_{j} \big)$. An \em inner region \em of $\mathbb{P}$ is a simply connected region of $\mathbb{P}$ that does not meet $\partial \mathbb{P}$.  (If $\mathbb{P}$ is connected note that \em any \em region that does not meet the boundary is simply connected.)

The picture  $\mathbb{P}$ is \em  non-trivial \em if $m\geq1$, is \em connected \em  if $\bigcup_{i=1}^{m} D_{i} \cup \bigcup_{j=1}^{n} \alpha_{j}$ is connected, and is \em spherical \em  if it is non-trivial and if none of the arcs meets the boundary of $\mathbb{P}$.
The number of arcs or \em edges \em  in $\partial\Delta$ is called the \em degree \em of the region $\Delta$ and is denoted by $d(\Delta)$. A region of degree $k$ will be called a \em $k$-gonal region. \em

Now consider the relative presentation   $\pres{G,\mathbf{x}}{\mathbf{r}}$ and suppose that the picture $\mathbb{P}$ is labelled in the following sense: each arc $\alpha_{j}$ is equipped with  a normal orientation, indicated by a short arrow meeting the arc transversely, and labelled by an element of $\mathbf{x}\cup \mathbf{x}^{-1}$. Each corner of $\mathbb{P}$ is oriented \em clockwise \em  (relative to the disc/vertex to which the corner is associated) and labelled by an element of $G$. (We remark here that the convention \em anti-clockwise \em has been adopted by some authors.) If $\kappa$ is a corner of a disc $D_{i}$ of $\mathbb{P}$, then $W(\kappa)$ will be the word obtained by reading in a  clockwise order the labels on the arcs and corners meeting $\partial D_{i}$ beginning with the label on the first arc we meet as we  read the clockwise corner $\kappa$. If we cross an arc labelled $x$ in the direction of its normal orientation, we read $x$, else we read $x^{-1}$.

A \em picture over $\mathcal{P}$ \em  is a picture $\mathbb{P}$ labelled in such a way the following are satisfied:

\begin{itemize}
  \item[1.] For each corner $\kappa$ of  $\mathbb{P}$, $ W(\kappa) \in
\mathbf{r}^{*}$, the set of all cyclic permutations of $\mathbf{r}\cup
\mathbf{r}^{-1}$ which begin with a member of  $\mathbf{x}$.
  \item[2.] If $g_{1},\ldots,g_{l}$ is the sequence of corner labels encountered
in an \em anticlockwise \em  traversal of the boundary of an inner region
$\Delta$ of $\mathbb{P}$, then the product $g_{1}g_{2}\ldots g_{l}=1$
in $G$. We say that $g_{1}g_{2}\ldots g_{l}$ is the \em label \em  of $\Delta$.
\end{itemize}

A \em dipole \em  in a labelled picture  $\mathbb{P}$ over $\mathcal{P}$ consists of  corners $\kappa$ and $\kappa '$ of  $\mathbb{P}$ together with an arc joining the two corners such that $\kappa$ and $\kappa'$ belong to the same region and such that if $W(\kappa)$= $Sg$ where $g\in G$ and $S$ begins and ends with a member of $\mathbf{x}\cup \mathbf{x}^{-1}$, then $W(\kappa')$= $S^{-1}g^{-1}$. For example if $\mathbf{x}=\{x\}$ and $\mathbf{r}=\{x^3gx^{-1}h\}$ then a dipole is given in Figure~\ref{fig:dipoleandvertex}(a).

\begin{de}[\cite{BP},\cite{Davidson09}]\label{def:DR}
A relative presentation $\mathcal{P}$ is called \em diagrammatically reducible \em  if every connected spherical picture over $\mathcal{P}$ contains a dipole.
\end{de}

A picture $\mathbb{P}$ is called \em reduced \em  if it does not contain a dipole. Thus if $\mathcal{P}$ fails to be diagrammatically reducible, or is \em non-diagrammatically reducible, \em then there is a non-trivial reduced, connected, spherical picture over $\mathcal{P}$. A connected spherical picture $\mathbb{P}$ is called \em strictly spherical \em  if the product of the corner labels in the annular region, taken in a clockwise direction, is equal to the identity in $G$.

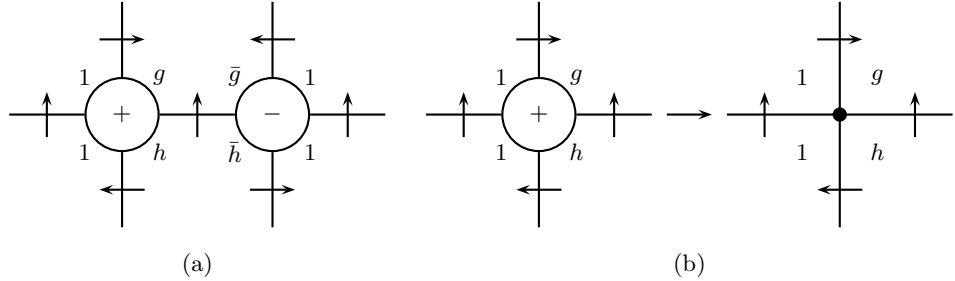
\begin{figure}
\begin{tabular}{l@{\qquad\qquad}l}
  \begin{pspicture}(0,1)(7,5)
  \footnotesize
  \psset{xunit=1cm}
  \psset{yunit=1cm}
\pscircle(2,3){0.5}
\rput(2,3){$+$}
\pscircle(4,3){0.5}
\rput(4,3){$-$}
\psline(0.5,3)(1.5,3)
\rput(1.5,3.5){$1$}%Left vertex NW
\rput(1.5,2.5){$1$}%Left vertex SW
\psline[arrowscale=1.2]{->}(1,2.7)(1,3.3)
\psline(2.5,3)(3.5,3)
\rput(2.5,3.5){$g$}%Left vertex NE
\rput(2.5,2.5){$h$}%Left vertex SE
\psline[arrowscale=1.2]{->}(3,2.7)(3,3.3)
\psline(4.5,3)(5.5,3)
\rput(4.5,3.5){$1$}%Right vertex NE
\rput(4.5,2.5){$1$}%Right vertex SE
\psline[arrowscale=1.2]{->}(5,2.7)(5,3.3)
\rput(3.5,3.5){$\bar{g}$}%Right vertex NW
\rput(3.5,2.5){$\bar{h}$}%Right vertex SW
\psline(2,3.5)(2,4.5)
\psline[arrowscale=1.2]{->}(1.7,4)(2.3,4)
\psline(2,2.5)(2,1.5)
\psline[arrowscale=1.2]{->}(2.3,2)(1.7,2)
\psline(4,3.5)(4,4.5)
\psline[arrowscale=1.2]{->}(4.3,4)(3.7,4)
\psline(4,2.5)(4,1.5)
\psline[arrowscale=1.2]{->}(3.7,2)(4.3,2)
\rput(3,1){(a)}
\end{pspicture}
&
  \begin{pspicture}(3,1)(8,5)
  \footnotesize
  \psset{xunit=1cm}
  \psset{yunit=1cm}
%%%%%%LEFT
\pscircle(2,3){0.5}
\rput(2,3){$+$}
\psline(0.5,3)(1.5,3)
\rput(1.5,3.5){$1$}%Left vertex NW
\rput(1.5,2.5){$1$}%Left vertex SW
\psline[arrowscale=1.2]{->}(1,2.7)(1,3.3)
\psline(2.5,3)(3.5,3)
\rput(2.5,3.5){$g$}%Left vertex NE
\rput(2.5,2.5){$h$}%Left vertex SE
\psline[arrowscale=1.2]{->}(3,2.7)(3,3.3)
\psline(2,3.5)(2,4.5)
\psline[arrowscale=1.2]{->}(1.7,4)(2.3,4)
\psline(2,2.5)(2,1.5)
\psline[arrowscale=1.2]{->}(2.3,2)(1.7,2)

%%% Joining arrow
\psline[arrowscale=1.2]{->}(3.7,3)(4.3,3)

%
%%%%%%Right
%\pscircle(6,3){0.5}
\psdot[dotscale=1.5](6,3)
\rput(6,3){$-$}
\psline(6,3)(7.5,3)
\rput(6.5,3.5){$g$}%Right vertex NE
\rput(6.5,2.5){$h$}%Right vertex SE
\psline[arrowscale=1.2]{->}(7,2.7)(7,3.3)
\rput(5.5,3.5){$1$}%Right vertex NW
\rput(5.5,2.5){$1$}%Right vertex SW
\psline(6,3)(6,4.5)
\psline[arrowscale=1.2]{->}(5.7,4)(6.3,4)
\psline(6,3)(6,1.5)
\psline[arrowscale=1.2]{->}(6.3,2)(5.7,2)
\psline(4.5,3)(6,3)
\psline[arrowscale=1.2]{->}(5,2.7)(5,3.3)
\rput(4,1){(b)}
\end{pspicture}
\end{tabular}
      \caption{(a) dipole; (b) disc/vertex contraction.\label{fig:dipoleandvertex}}
      \end{figure}

\begin{de}\label{def:weakDR}
A relative presentation $\mathcal{P}$ is called \em weakly diagrammatically reducible \em  if every
connected strictly spherical picture over $\mathcal{P}$ contains a dipole.
\end{de}

\begin{re}\label{rem:genremarks}
 \begin{itemize}
  \item[(a)] Our definition of (weak) diagrammatically reducibility for a relative presentation coincides with (weak) asphericity as defined in~\cite{BP}.
  \item[(b)] It is shown in~\cite[Lemma~1.7]{BP} that if $\mathcal{P}$ is weakly diagrammatically reducible and the natural map of $G$ into $G(\mathcal{P})$ is injective then $\mathcal{P}$ is diagrammatically reducible.
  \item[(c)] Diagrammatically reducible relative presentations can have proper power relators. See Remark~\ref{rem:caveats} and Theorem~\ref{thm:DRAspherical}(a) below.
  \end{itemize}
  \end{re}

For two-complexes $X$, including two-dimensional cellular models of ordinary group presentations, the DR property (in the sense of \cite{AJSColor},\cite{Gersten87}) implies topological asphericity ($\pi_2X = 0$). For relative presentations that are orientable in the sense of Definition~\ref{def:orientable}, the relationship between diagrammatic reducibility and asphericity is a consequence of \cite[Theorem~4.1]{BP}.

\begin{thm}[{{See~\cite[Theorem~4.1]{BP}}}]\label{thm:DRAspherical} Let $\P = \pres{G,\mathbf{x}}{\mathbf{r}}$ be an orientable relative presentation.
\begin{itemize}
\item[(a)] If $\P$ is diagrammatically reducible, then $\pi_2(M(\P),K(G,1)) = 0$, where $M(\P)$ is the expanded cellular model described in Remark~\ref{rem:caveats}, and so the natural homomorphism $G \ra G(\P)$ is injective.

\item[(b)] If $\P$ is diagrammatically reducible (resp.\,weakly diagrammatically reducible) and has no proper power relators, then $\P$ is aspherical (resp.\,weakly aspherical). \end{itemize}
\end{thm}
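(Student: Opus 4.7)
The plan rests on the standard dictionary between pictures over $\P$ and continuous maps of pairs $(B^2,S^1) \to (M(\P),K(G,1))$, which originates with Howie~\cite{Howie83} and is developed in~\cite{BP} for the expanded cellular model. After a transverse perturbation, the preimage of the midpoints of the two-cells attached outside $K(G,1)$ yields a picture $\mathbb{P}$ in $B^2$, where the arcs record crossings of the one-cells indexed by $\mathbf{x}$ and the corner labels come from the regions of the map landing in $K(G,1)$. Pictures arising this way are connected and spherical (no arc meets $\partial B^2 = S^1$, because the boundary maps into $K(G,1)$). Conversely, every connected spherical picture $\mathbb{P}$ over $\P$ determines a class $[\mathbb{P}] \in \pi_2(M(\P),K(G,1))$, and inserting or removing a dipole produces a homotopic representing map, so dipole-equivalent pictures represent the same class.

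For part~(a), I would suppose $\P$ is diagrammatically reducible and take any nonzero class in $\pi_2(M(\P),K(G,1))$. Among the connected spherical pictures representing this class, choose one, $\mathbb{P}$, of minimal vertex count. The DR hypothesis supplies a dipole in $\mathbb{P}$; the orientability conditions of Lemma~\ref{lemma:relatorconditions}(a),(b) guarantee that removing the dipole gives a genuine picture over $\P$ (for instance, an involutory corner label across the dipole arc would correspond to a relator conjugate in $G \ast F$ to an element of $G$, or to a proper cyclic inversion of itself, both forbidden by orientability). The resulting picture has strictly fewer vertices and represents the same class, contradicting minimality. Hence $\pi_2(M(\P),K(G,1)) = 0$. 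The long exact homotopy sequence for the pair $(M(\P),K(G,1))$, applied exactly as in Lemma~\ref{Lemma:KG1}, then forces the inclusion-induced map $\pi_1 K(G,1) \to \pi_1 M(\P) = G(\P)$ to be injective.

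Part~(b) specialises (a). With no proper power relators, $M(\P) = L(\P)$, so the DR hypothesis already yields $\pi_2(L(\P),K(G,1)) = 0$, that is, asphericity. For the weak variant, I would replace connected spherical pictures by their strictly spherical counterparts and reinterpret them as spherical maps $S^2 \to L(\P)$: the strict sphericity condition (trivial monodromy in $G$ around the annular boundary region) is precisely what lets the $K(G,1)$-portion of the accompanying map close up on a sphere rather than merely on a disc. Weak diagrammatic reducibility then delivers the required dipole for the minimality argument, giving $\pi_2 L(\P) = 0$, which is weak asphericity by Lemma~\ref{Lemma:Weak}(a).

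The main obstacle is verifying rigorously that picture-theoretic dipole reductions lift to genuine homotopies of the representing maps into $M(\P)$; in the presence of proper power relators, the attached $K(\Z_{e(r)},1)$-pieces of $M(\P)$ contain infinitely many higher cells, and one must confirm that the moves available on pictures (essentially only dipole insertion and removal) exhaust the relations among classes in the relative homotopy group. This is the algebraic heart of~\cite[Theorem~4.1]{BP}, and orientability is the combinatorial condition that pins the picture moves to honest homotopies.
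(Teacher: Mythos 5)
The paper gives no proof of this theorem: it is quoted from \cite[Theorem~4.1]{BP}, with only the one-sentence remark that orientability is what ensures dipoles represent homotopically trivial elements of $\pi_2(L(\P),K(G,1))$. Your sketch is a faithful reconstruction of the Bogley--Pride argument --- the transversality dictionary between pictures and maps of pairs $(B^2,S^1)\ra(M(\P),K(G,1))$, reduction of a minimal-vertex representative via a dipole, and orientability as the condition making dipole cancellation an honest homotopy --- and you correctly isolate the substantive input (that dipole and bridge moves account for all relations among picture classes in $\pi_2(M(\P),K(G,1))$), which is precisely what is established in the cited source. The one imprecision is your restriction to \emph{connected} representing pictures: a class may a priori be represented only by a disconnected spherical picture, so the minimality argument should run over all spherical pictures, applying the diagrammatic reducibility hypothesis to an innermost connected component (or first invoking bridge moves); this is routine and is handled in \cite{BP}.
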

The orientability condition in Theorem~\ref{thm:DRAspherical} is necessary to ensure that dipoles represent homotopically trivial elements of $\pi_2(L(\P),K(G,1))$. See \cite[Figure 1]{BP} for an example of a dipole in the non-orientable setting and see \cite[Example 3.5]{BShift} for additional discussion.

Diagrammatic reducibility and asphericity are distinct concepts for both ordinary and relative group presentations.

\begin{ex}[Aspherical but non-diagrammatically reducible]\label{ex:asphnotDR}
(a) If the group $G=\pres{g,h}{h^2=1,g=h^2}\cong \Z_2$ and $\mathcal{P} = \pres{G,x}{x^2gx^{-1}h}$, then the natural homomorphism $G \ra G(\mathcal{P})$ is an isomorphism, so $\mathcal{P}$ is aspherical. However, a reduced spherical picture $\mathbb{P}$ over $\mathcal{P}$ is depicted in Figure~\ref{fig:pequals1}, so $\mathcal{P}$ is non-diagrammatically reducible. Cyclically reducing, we obtain $\mathcal{P'} = \pres{G,x}{xh}$, which is obviously diagrammatically reducible. Thus diagrammatic reducibility is not robust under cyclic reduction of relators.
\begin{figure}%[ht]
  \begin{center}
\begin{pspicture}(-0.1,1.5)(5.9,3)
  \footnotesize
  \psset{xunit=0.7cm}
  \psset{yunit=0.7cm}

%vertices
\psdot(1,3)
\psdot(3,3)
\psdot(5,3)
\psdot(7,3)
%horizontal lines
\psline(1,3)(3,3)
\psline(5,3)(7,3)
%circles
\pscircle(0,3){0.7}
\pscircle(4,3){0.7}
\pscircle(8,3){0.7}
%horizontal arrows
\psline[arrowscale=1.2]{->}(-1.3,3)(-0.7,3)
\psline[arrowscale=1.2]{->}(9.3,3)(8.7,3)
%vertical arrows
\psline[arrowscale=1.2]{->}(2,2.7)(2,3.3)
\psline[arrowscale=1.2]{->}(6,3.3)(6,2.7)
\psline[arrowscale=1.2]{->}(4,3.7)(4,4.3)
\psline[arrowscale=1.2]{->}(4,2.3)(4,1.7)
%text
\rput(0.7,3){$\bar{g}$}
\rput(1.3,2.7){$\bar{h}$}
\rput(1.3,3.3){$1$}
\rput(3.3,3){$h$}
\rput(2.7,3.3){$g$}
\rput(2.7,2.7){$1$}
\rput(4.7,3){$h$}
\rput(5.3,2.7){$g$}
\rput(5.3,3.3){$1$}
\rput(7.3,3){$\bar{g}$}
\rput(6.7,3.3){$\bar{h}$}
\rput(6.7,2.7){$1$}
\end{pspicture}
\end{center}
  \caption{Reduced spherical picture.\label{fig:pequals1}}
\end{figure}
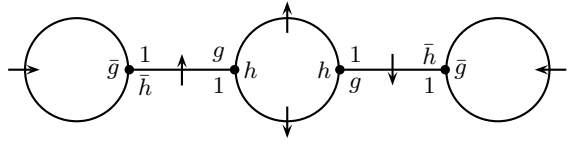

(b) Let $G = 1$ and $\P = \pres{G,x,y}{xyx^{-1}y^{-2}, yxy^{-1}x^{-2}}$. Taking $K = K(G,1)$ to be a point, the complex $L(\P)$ is a contractible, non-collapsible two-complex, so the relative presentation $\P$ is aspherical by Lemma~\ref{Lemma:G=H}. On the other hand, the result \cite[Theorem 2.4]{CorsonTrace} of Corson and Trace implies that $L(\P)$ is not DR (in the sense of \cite{AJSColor},\cite{Gersten87}). This in turn implies that the relative presentation $\P$ is non-diagrammatically reducible in the sense of Definition~\ref{def:DR}.
\end{ex}

In the next section we will discuss combinatorial geometric methods for detecting diagrammatic reducibility, as this concept is significant as a sufficient condition for asphericity of relative presentations. The next result shows that the diagrammatic reducibility concept also has independent group-theoretic and combinatorial significance.

\begin{thm}\label{thm:DRinf} Let $\P = \pres{G,\mathbf{x}}{\mathbf{r}}$ be a finite orientable relative presentation with no proper power relators. Assume that $\P$ is diagrammatically reducible and that the natural homomorphism $G \stackrel{\mathrm{nat}}{\ra} G(\P)$ admits a left inverse in the category of groups. Then the following are equivalent:
\begin{itemize}
\item[(a)] $G \stackrel{\mathrm{nat}}{\ra} G(\P)$ is an isomorphism;
\item[(b)] the image of $G \stackrel{\mathrm{nat}}{\ra} G(\P)$ has finite index in $G(\P)$;
\item[(c)] the cellular model $L(\P)$ collapses to its subcomplex $K(G,1)$: $L(\P) \searrow K(G,1)$.
\end{itemize}
Thus, if $L(\P)$ does not collapse to $K(G,1)$, then $G$ has infinite index in $G(\P)$.
\end{thm}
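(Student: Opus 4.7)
The plan is first to extract asphericity from the hypotheses so that the subgroup and cohomology machinery of Theorem~\ref{Theorem:Theory} becomes available; then to handle (a)$\Leftrightarrow$(b) by a cohomological dimension argument applied to the kernel of the retraction, and (c)$\Leftrightarrow$(a) by combining standard homotopy theory with a combinatorial collapse argument that uses the DR hypothesis. The final ``infinite index'' assertion is then the contrapositive of (c)$\Rightarrow$(b).

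Since $\P$ is orientable, diagrammatically reducible, and has no proper power relators, Theorem~\ref{thm:DRAspherical}(b) gives that $\P$ is aspherical; by Lemma~\ref{Lemma:KG1} the natural map $G\to G(\P)$ is injective and $L(\P)$ is a $K(G(\P),1)$. Writing $N=\ker\nu$, the splitting hypothesis yields an internal semidirect product $G(\P)=N\rtimes G$, so $N\cap G=1$ and $[G(\P):G]=|N|$. The implication (a)$\Rightarrow$(b) is immediate, and (c)$\Rightarrow$(a) follows because a collapse $L(\P)\searrow K(G,1)$ is a deformation retraction, making $K(G,1)\hookrightarrow L(\P)$ a homotopy equivalence whose $\pi_1$-map is the natural one.

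For (b)$\Rightarrow$(a), assume $|N|$ is finite. Since $N\cap G=1$, Theorem~\ref{Theorem:Theory}(a) gives that $N$ has geometric (hence cohomological) dimension at most two. But a nontrivial finite group has torsion and therefore infinite cohomological dimension, so $N=1$ and $G\to G(\P)$ is an isomorphism.

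The main obstacle is (a)$\Rightarrow$(c). Under (a), asphericity of $L(\P)$ gives $\widetilde{L(\P)}$ contractible, while the $\pi_1$-isomorphism $G\cong G(\P)$ ensures that the pre-image of $K(G,1)$ under the universal covering projection is a single contractible copy of $\widetilde{K(G,1)}$. The relative cellular chain complex then reduces to a two-term exact sequence
\[
0\to (\Z G)^{|\mathbf{r}|}\to (\Z G)^{|\mathbf{x}|}\to 0
\]
of finitely generated free $\Z G$-modules, so invariant basis number forces $|\mathbf{r}|=|\mathbf{x}|$ and the boundary is a $\Z G$-module isomorphism. The remaining task, and the real difficulty, is to convert this algebraic identification into a genuine combinatorial collapse; this is where diagrammatic reducibility enters. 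My plan is a picture-theoretic induction: at each stage, use DR to locate a remaining one-cell that occurs as a free face of some remaining two-cell (otherwise the obstruction could be assembled into a reduced spherical picture over the truncated presentation, contradicting DR), perform the elementary collapse, and iterate until only $K(G,1)$ remains. The final ``infinite index'' assertion is then the contrapositive of (c)$\Rightarrow$(b).
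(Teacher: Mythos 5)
Your overall architecture is sound and most of it matches the paper: asphericity via Theorem~\ref{thm:DRAspherical}(b) and Lemma~\ref{Lemma:KG1}, the semidirect product $G(\P)=N\rtimes G$ from the retraction, (a)$\Rightarrow$(b) trivial, (c)$\Rightarrow$(a) by deformation retraction, and the observation that a finite subgroup meeting $G$ trivially must be trivial (the paper runs this same cohomological-dimension argument, just phrased as ``$\pi_2X=0$ and $\pi_1X$ finite imply $X$ simply connected''). The implication chain is arranged differently --- the paper proves (b)$\Rightarrow$(c) directly rather than (b)$\Rightarrow$(a)$\Rightarrow$(c) --- but that is immaterial.

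The genuine gap is in your step toward (c). Your algebraic reduction (the two-term exact sequence of free $\Z G$-modules) only re-establishes that $L(\P)$ deformation retracts to $K(G,1)$ up to homotopy; as the paper's own Example~\ref{ex:asphnotDR}(b) shows, a contractible two-complex need not be collapsible, so everything hinges on the passage from diagrammatic reducibility to an actual collapse. Your ``picture-theoretic induction'' does not supply this: the assertion that the absence of a free face at some stage of a partial collapse ``could be assembled into a reduced spherical picture over the truncated presentation'' is precisely the hard content of the Corson--Trace theorem (\cite[Theorem~2.4]{CorsonTrace}), not something that follows by inspection, and DR of $\P$ does not obviously pass to whatever ``truncated presentation'' you have after some collapses. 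The paper sidesteps all of this by changing complexes: it forms the regular $G$-cover $\overline{L}$, collapses the contractible pre-image of $K(G,1)$ to obtain a finite \emph{ordinary} two-complex $X$ modelling a $G$-symmetric presentation of $\ker\nu$ (as in \cite[Theorem~2.3]{BShift}), shows $X$ is DR in the ordinary sense by lifting spherical pictures over $X$ to relative pictures over $\P$ and pulling dipoles back (the argument of \cite[Lemma~3.1]{GH95}, using the no-proper-power hypothesis), then invokes Corson--Trace to get $X$ collapsible and transports the elementary collapses back to $L(\P)\searrow K(G,1)$. To repair your proof you would either need to carry out this covering-space reduction and cite Corson--Trace, or genuinely reprove their theorem; the one-line induction as written would not survive scrutiny.
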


\begin{proof} Only the statement (b) $\Rightarrow$ (c) requires proof. The fact that $\P$ is orientable and diagrammatically reducible means that every nonempty spherical picture over $\P$ contains a dipole. Since there are no proper power relators Theorem \ref{thm:DRAspherical}(b) and Lemma \ref{Lemma:KG1} imply that the cellular model $$L = L(\P) = K(G,1) \vee \bigvee_\mathbf{x} S^1_x \cup \bigcup_\mathbf{r} c^2_r$$ is aspherical. The fact that $G \stackrel{\mathrm{nat}}{\ra} G(\P)$ has a left inverse $\nu: G(\P) \ra G$ implies that there is regular $G$-covering $p:\overline{L} \ra L$ for which the quotient map $q: \overline{L} \ra X$ that collapses $p^{-1}(K(G,1)) = \widetilde{K(G,1)} \simeq \ast$ to a point is a homotopy equivalence. As in \cite[Theorem 2.3]{BShift}, this implies that the two-complex $X$ is the cellular model of a $G$-symmetric presentation for the kernel of $\nu$. Arguing as in the proof of \cite[Lemma 3.1]{GH95}, any spherical picture (or diagram) $\mathcal{K}$ for $X$ gives rise to a relative picture $\mathcal{L}$ over $\P$ and the presence of a dipole in $\mathcal{L}$ implies that the ordinary picture $\mathcal{K}$ also has a dipole. Now the proper power hypothesis for $\P$ implies that no two-cell of $X$ is attached by a proper power, so it follows that $X$ is diagrammatically reducible (DR) in the ordinary sense, as described by Gersten \cite[Definition 3.1]{Gersten87}. In particular $X$ is aspherical in the topological sense: $\pi_2X = 0$. Since $G(\P) \cong \pi_1X \rtimes G$, there is a bijection $\pi_1X \equiv G(\P)/G$ and so the hypothesis (b)  implies that $\pi_1X$ is finite. The fact that $\pi_2X = 0$ thus implies that $X$ is simply connected and so by the result \cite[Theorem 2.4]{CorsonTrace} of Corson and Trace, $X$ is a collapsible two-complex. A sequence of elementary  collapses that reduces $X$ to a point determines a sequence of elementary collapses that reduces $L(\P)$ to its subcomplex $K(G,1)$.
\end{proof}

\begin{ex}[Diagrammatically reducible versus finiteness]\label{ex:DRinf}
(a) The natural homomorphism associated to the relative presentation $\mathcal{J} =$ \linebreak $\pres{\pres{g}{g^4},x}{x^4gx^{-3}g^2}$ of Example \ref{ex:finiteorderx} is an isomorphism of cyclic groups of order four, so $\mathcal{J}$ is aspherical by Lemma \ref{Lemma:G=H}. However the cellular model $L(\mathcal{J})$ does not collapse to the subcomplex $K(\pres{g}{g^4},1)$, so the relative presentation $\mathcal{J}$ is aspherical, yet it is not diagrammatically reducible, by Theorem \ref{thm:DRinf}. This means that $\mathcal{J}$ admits nonempty reduced spherical pictures. To our knowledge, no such pictures have yet been constructed.

(b) Theorem~\ref{thm:DRinf} implies that the diagrammatically reducible relative presentations $\mathcal{P}=\pres{G,x}{x^2gx^{-1}h}$ obtained in~\cite{ECap} define infinite groups $G(\mathcal{P})$. (More generally, it implies that if the relative presentation $\mathcal{P}=\pres{G,x}{x^{k+1}gx^{-k}h}$ ($k\geq 1$) is diagrammatically reducible then $G(\mathcal{P})$ is infinite.) As we now describe, it follows that the (almost complete) classification of the finite cyclically presented groups $H(n,m)$ and $G_n(m,k)$ of~\cite{GH95},\cite{WilliamsCHR} can be obtained without the need of appealing to~\cite{Odoni},\cite{WilliamsUnimodular}. The groups $H(n,m)=G_n(m,1)$ and $G_n(m,k)$ each have $\Z_n$ extensions defined by relative presentations of the form $\mathcal{P}=\pres{\pres{g}{g^n}}{x^2g^{m-k}x^{-1}g^k}$. The articles~\cite{GH95},\cite{WilliamsCHR} use the (almost complete) classification of diagrammatically reducible relative presentations $\pres{G,x}{x^2gx^{-1}h}$ (see Theorem~\ref{thm:l=2k=-1}) to obtain corresponding (almost complete) classifications of the topologically aspherical cyclic presentations. Since groups defined by topologically aspherical presentations are torsion-free, it follows that the corresponding cyclically presented groups are either trivial or infinite. The articles~\cite{Odoni},\cite{WilliamsUnimodular} use techniques from algebraic number theory and circulant matrices to classify the perfect groups $H(n,m)$ and $G_n(m,k)$. It transpires that, in the cases where the cyclic presentation is shown to be aspherical (which is precisely when the corresponding relative presentation $\mathcal{P}$ is known to be diagrammatically reducible), the group it defines is not perfect, and hence is non-trivial, and so is infinite. Theorem~\ref{thm:DRinf} allows us to sidestep the classifications of the perfect groups by concluding that in the diagrammatically reducible cases the group $G(\mathcal{P})$, and hence the groups $H(n,m)$ and $G_n(m,k)$, are infinite.
\end{ex}

A number of `asphericity' classifications in the literature are actually classifications of diagrammatic reducibility \cite{BP},\cite{ECap},\cite{HM},\cite{AE14},\cite{AEJ17},\cite{AAE14},\cite{EdjJuh14}. Some of these papers implicitly rely on \cite[Lemma 3]{BBP}, which is stated and proved only for the concept of (weak) asphericity. As a result there is a slight gap in the literature due to the distinction between asphericity and diagrammatic reducibility. The following enhanced version of \cite[Lemma 3]{BBP} closes this gap.

\begin{lemma}\label{lem:freesbgpeltsimplyDR} Let $\P = \pres{G,x}{r}$ be a one-relator relative presentation where the relator $r$ is cyclically reduced in the free product $G \ast F$ and is not an element of $G$ (but may be a proper power in $G \ast F$). If the $G$-coefficients occurring in $r$ are all contained in a free subgroup of $G$, then $\P$ is diagrammatically reducible.
\end{lemma}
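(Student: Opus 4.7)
The plan is to derive DR by combining the known weak-DR version of this result (essentially \cite[Lemma~3]{BBP}) with injectivity of the natural homomorphism $G \to G(\P)$, then invoking Remark~\ref{rem:genremarks}(b). By \cite[Lemma~3]{BBP}---which, in view of Remark~\ref{rem:genremarks}(a), establishes weak diagrammatic reducibility under precisely the coefficient hypothesis at hand---the relative presentation $\P$ is already weakly diagrammatically reducible. It therefore suffices, by Remark~\ref{rem:genremarks}(b), to show that $G \to G(\P)$ is injective.

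To this end, let $H$ be the subgroup of $G$ generated by the $G$-coefficients appearing in $r$. Since these coefficients lie in a free subgroup of $G$ and are finite in number, $H$ is itself free on some finite basis $Y$. Form the auxiliary one-relator relative presentation $\P_0 = \pres{H,x}{r}$; its defining group $G(\P_0) = \pres{Y \cup \{x\}}{r}$ is an ordinary one-relator group in which $r$ is cyclically reduced and involves $x$ (since $r \notin G \supseteq H$) and involves every element of $Y$ (by the choice of $H$). Magnus's Freiheitssatz then yields an embedding $H \hookrightarrow G(\P_0)$. Arguing as in the proof of Lemma~\ref{Lemma:Coefficient}, the fact that $H$ embeds in both $G$ (trivially) and $G(\P_0)$ (by the preceding step) produces an amalgamated free product decomposition $G(\P) \cong G *_H G(\P_0)$, from which $G \hookrightarrow G(\P)$ follows immediately.

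The main subtlety I anticipate is ensuring that this amalgamated product decomposition is available in our setting without circularly invoking the conclusion of Lemma~\ref{Lemma:Coefficient} itself: what we need is a version of that lemma's splitting step that depends on just the $\pi_1$-embedding $H \hookrightarrow G(\P_0)$ and not on full asphericity of $\P_0$. This step is indeed purely algebraic---a standard consequence of $r$ lying in the subgroup $H * F$ together with the Freiheitssatz embedding (compare \cite[page~89]{Howie83})---so no circularity arises. Combining the resulting injectivity of $G \to G(\P)$ with weak DR via Remark~\ref{rem:genremarks}(b) then yields diagrammatic reducibility of $\P$, as required.
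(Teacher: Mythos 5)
There is a genuine gap at the very first step, and it is precisely the gap that this lemma exists to close. You invoke \cite[Lemma~3]{BBP} as establishing \emph{weak diagrammatic reducibility}, citing Remark~\ref{rem:genremarks}(a); but that remark identifies (weak) diagrammatic reducibility with (weak) asphericity \emph{as defined in \cite{BP}}, whereas \cite[Lemma~3]{BBP} is stated and proved for the (weak) asphericity concept of \cite{BBP}, which is the homotopy-theoretic condition of Definition~\ref{Definition:Weak} (generation of $\pi_2$ by the image of $\pi_2K^2$), not the picture-theoretic dipole condition of Definitions~\ref{def:DR} and~\ref{def:weakDR}. The surrounding text is explicit about this: the whole point of the ``enhanced version'' is that the literature result yields only (weak) asphericity, and asphericity does not imply diagrammatic reducibility. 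Consequently Remark~\ref{rem:genremarks}(b) cannot be applied: its hypothesis is weak diagrammatic reducibility, and the most your first step can deliver is weak asphericity, which together with injectivity of $G \ra G(\P)$ gives only asphericity of $\P$ (Lemma~\ref{Lemma:Weak}(d)) --- a strictly weaker conclusion, as Example~\ref{ex:asphnotDR} shows: there are aspherical presentations, even with $G$ trivial or with $G \ra G(\P)$ an isomorphism, that admit nonempty reduced spherical pictures.

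Your injectivity argument is fine in itself --- the Freiheitssatz applied to $\pres{Y\cup\{x\}}{r}$ does embed the free group $H$ in $G(\P_0)$, and the amalgamated decomposition $G(\P) \cong G \ast_H G(\P_0)$ then gives $G \hookrightarrow G(\P)$ --- but it cannot rescue the argument, because no amount of $\pi_1$-injectivity converts a $\pi_2$ statement into a dipole statement. The paper's proof instead works directly with pictures: after reducing to the case where $G$ is free (as in Lemma~\ref{Lemma:Coefficient}), a connected spherical picture over $\P$ is lifted to a picture over an ordinary one-relator presentation $\widehat{\P}$, which is \emph{staggered} because $r$ is cyclically reduced in $G \ast F$; \cite[Proposition~III.9.7]{LyndonSchupp} then produces a dipole in the lifted picture supported on an arc labelled by $x$, and this descends to a dipole in the original picture. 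Some combinatorial step of this kind (or Howie's tower argument) is unavoidable here and is what your proposal is missing.
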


\begin{proof} Let $\mathbb{P}$ be a connected spherical picture over $\P$. Since the corner labels of the picture lie in a free subgroup of $G$, we can assume that $G$ is free (similar to Lemma~\ref{Lemma:Coefficient}). As in \cite[Section 1.6]{BP}, the picture $\mathbb{P}$ \em lifts \em to a picture $\widehat{\mathbb{P}}$ over an ordinary one-relator presentation $\widehat{\P}$ for $G(\P)$. Since the relator $r \in G \ast F$ is cyclically reduced, the one-relator presentation $\widehat{\P}$ is \em staggered \em in the sense of \cite[Chapter III.9]{LyndonSchupp}. It follows from \cite[Proposition III.9.7]{LyndonSchupp} that the lifted picture $\widehat{\mathbb{P}}$ contains a dipole and moreover this dipole is supported by a pair of discs joined by an arc labeled by the generator $x$. (Note that \cite[Chapter III.9]{LyndonSchupp} permits the identification of a preferred subset of generators to support the staggered structure.) This means that the dipole in $\widehat{\mathbb{P}}$ actually detects a dipole in $\mathbb{P}$.
\end{proof}

\begin{re} The proof that we have given for Lemma~\ref{lem:freesbgpeltsimplyDR} depends on \cite[Propositions III.9.7]{LyndonSchupp}, which is proved in \cite{LyndonSchupp} by an extremely complex multiple induction. An alternative approach to these results appears in \cite[Section 4]{Howie87}, where the inductive complexities are sublimated within a conceptually transparent \em tower argument \em that ultimately rests on a structure theorem for staggered two-complexes $X$ satisfying $H^1X = 0$ \cite[Lemma 3]{Howie87} (see also \cite[Lemma 5.3]{Gersten87}). All of these methods, including those in \cite[Chapter III.9]{LyndonSchupp}, imply that any staggered two-complex with no proper power relators is DR in the sense of \cite{AJSColor},\cite{Gersten87}. Further innovations to the staggering concept were applied to equations over groups by Anshel \cite{Anshel91} and subsequently adapted to asphericity questions in \cite{Bogley91}.
\end{re}

We close this section with a brief discussion of combinatorial geometric methods that have been used by other authors in related contexts. The article \cite{Ol84} by Olshanskii includes concise historical remarks concerning geometric methods such as the van Kampen lemma, small cancellation theory, and aspherical presentations, as well as a survey of existence results in group theory obtained by the author using related geometric methods. The book \cite{Ol91} also develops a comprehensive treatment of geometric methods, including the formulation of a very general concept of asphericity for \em graded presentations. \em In \cite[Theorem C(1)]{P95}, Prishchepov adapts Olshanskii's graded concept to the setting of one-relator relative presentations in which the relator has free product length four; Prishchepov's result remains true as stated if one interprets the term `aspherical' in the sense of Definition~\ref{Definition:Aspherical}. We state this result in Theorem~\ref{thm:Prishchepov}. In \cite{Chalk98}, Chalk proves that certain cyclic presentations are `aspherical'; his proof actually shows that these (ordinary) presentations are DR in the sense of \cite{AJSColor},\cite{Gersten87}. Using \cite[Theorem 4.1]{BShift},  these results imply that certain relative presentations of the form $\pres{\pres{g}{g^n},x}{x^rg^lx^{-1}g^{-1}}$ are aspherical in the sense of Definition~\ref{Definition:Aspherical}. Chalk also discusses the relationship between asphericity for ordinary and relative presentations \cite[page 1520]{Chalk98}.

\subsection{Star graph and the weight test}\label{sec:WeightTest}

In this section we outline methods for showing that a given relative presentation $\mathcal{P}$ is diagrammatically reducible. The \em star graph $\mathcal{P}^{st}$ \em  of $\mathcal{P}$ is a graph whose vertex set is $\mathbf{x}\cup \mathbf{x}^{-1}$ and edge set is $\mathbf{r}^{*}$. For $r\in \mathbf{r}^{*}$, write $r=Sg$  where $g\in G$ and $S$ begins and ends with a member of $\mathbf{x}\cup \mathbf{x}^{-1}$. The initial and terminal functions are given as follows: $\iota (r)$ is the first symbol of $S$, and $\tau(r)$ is the inverse of the last symbol of $S$. The labelling function on the edges is defined by $\lambda(r)=g^{-1}$ and is extended to paths in the
usual way. For example let  $\mathbf{x}=\{x\}$ and $\mathbf{r}=\{x^lgx^kh\}$. If $l>0$ and $k>0$ the star graph is given by Figure~\ref{fig:stargraphs}(a) and  if $l>0$ and $k<0$ then the star graph is given by Figure~\ref{fig:stargraphs}(b). A non-empty cyclically reduced cycle (closed path) in $\mathcal{P}^{st}$ will be called \em admissible \em  if it has trivial label in $G$. Each inner region of a reduced picture over $\mathcal{P}$ supports an admissible cycle in $\mathcal{P}^{st}$.

      \begin{figure}
        \begin{center}
\begin{tabular}{l@{\qquad\qquad}l}
  %GOOD\begin{pspicture}(6,4)
  \begin{pspicture}(-0.3,0.5)(6,4)
  \footnotesize
  \psset{xunit=0.7cm}
  \psset{yunit=0.7cm}
%%%%%%%%%% l>0,k>0
      \psdots[dotscale=1.5](0.05,3)(5.95,3)
      \rput(-0.5,3){$\bar{x}$}
      \rput(6.5,3){$x$}
      \pscurve(0,3)(3,5)(6,3)
      \pscurve(0,3)(3,4.2)(6,3)
      \pscurve(0,3)(3,3.5)(6,3)
      \pscurve(0,3)(3,1)(6,3)
      \psline[arrowscale=1.2]{->}(2.9,5)(3.1,5)
      \rput(3,5.3){$g$}
      \psline[arrowscale=1.2]{->}(2.9,4.2)(3.1,4.2)
      \rput(3,4.5){$h$}
      \psline[arrowscale=1.2]{->}(2.9,3.5)(3.1,3.5)
      \rput(3,3.8){$1$}
      \psline[arrowscale=1.2]{->}(2.9,1)(3.1,1)
      \rput(3,1.3){$1$}
      \pscurve[linestyle=dotted](2,3.2)(1.8,2.4)(2,1.5)
      \rput(3.5,2.5){$(l+k)-2$~edges}
      \rput(3,0){(a)}
      \end{pspicture}

&
%GOOD\begin{pspicture}(6,4)
\begin{pspicture}(-0.3,0.5)(6,4)
      %%%%%%%%%%%%%%%%%%%%%%%%%
  \footnotesize
  \psset{xunit=0.7cm}
  \psset{yunit=0.7cm}
      \psdots[dotscale=1.5](0.05,3)(5.95,3)
      \rput(0,2.3){$\bar{x}$}
      \rput(6,2.3){$x$}
      \pscurve(0,3)(3,5)(6,3)
      \pscurve(0,3)(3,3.3)(6,3)
      \pscurve(0,3)(3,2.8)(6,3)
      \pscurve(0,3)(3,1)(6,3)
      \psline[arrowscale=1.2]{->}(3.1,5)(2.9,5)
      \rput(3,5.3){$1$}
      \psline[arrowscale=1.2]{->}(3.1,3.3)(2.9,3.3)
      \rput(3,3.6){$1$}
      \psline[arrowscale=1.2]{->}(2.9,2.8)(3.1,2.8)
      \rput(3,2.4){$1$}
      \psline[arrowscale=1.2]{->}(2.9,1)(3.1,1)
      \rput(3,0.7){$1$}
      \pscircle(-0.5,3){0.4}
      \psline[arrowscale=1.2]{->}(-1.05,2.9)(-1.05,3.1)
      \rput(-1.3,3){$h$}
      \pscircle(6.5,3){0.4}
      \psline[arrowscale=1.2]{->}(7.05,2.9)(7.05,3.1)
      \rput(7.3,3){$g$}
      \pscurve[linestyle=dotted](2,4.4)(1.9,3.95)(2,3.5)
      \rput(3.3,4.0){$|k|-1$~edges}
      \pscurve[linestyle=dotted](4,2.6)(4.1,2.15)(4,1.7)
      \rput(2.8,1.8){$l-1$~edges}
      \rput(3,0){(b)}
      %%%%%%%%%%%%%%%%%%%%%%%%%
      \normalsize
\end{pspicture}
\end{tabular}
\end{center}

          \caption{Star graphs.\label{fig:stargraphs}}
      \end{figure}
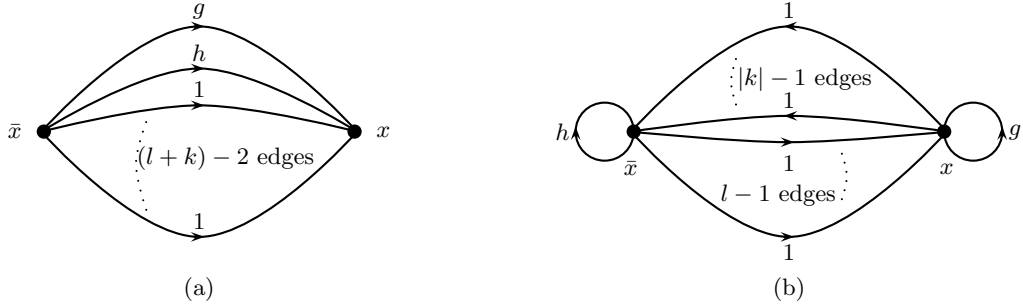

A \em weight function \em  $\theta$ is a real-valued function on the set of edges of $\mathcal{P}^{st}$ that satisfies
$\theta(Sg)=\theta(S^{-1}g^{-1})$ where $Sg=r\in\mathbf{r}^{*}$. The weight of a closed cycle is the sum of the weights of the constituent edges. A weight function is \em weakly aspherical \em  if  the following conditions are satisfied:
\begin{itemize}
  \item[(I)] 	Let $r\in \mathbf{r}^{*}$, with $r= x_{1}^{\varepsilon_{1}}g_{1}\ldots x_{n}^{\varepsilon _{n}}g_{n}$. Then
\[\sum_{i=1}^{n} (1-\theta(x_{i}^{\varepsilon_{i}}g_{i}\ldots x_{n}^{\varepsilon_{n}}g_{n}x_{1}^{\varepsilon_{1}}g_{1}\ldots x_{i-1}^{\varepsilon_{i-1}}g_{i-1}))\geq 2.\]

  \item[(II)] The weight of each admissible cycle in $\mathcal{P}^{st}$ is at least 2.
\end{itemize}

A weakly aspherical weight function on $\mathcal{P}^{st}$ is \em aspherical \em if each cyclically reduced closed cycle in $\mathcal{P}^{st}$ has non-negative weight.

\begin{thm}[{{\cite[Theorem~2.1]{BP}, \cite{GerstenUnpubl}, \cite[Theorem~2.2]{How91}}}]\label{Theorem:WeightTest}
If the star graph $\mathcal{P}^{st}$  admits a (weakly) aspherical weight function, then
        $\mathcal{P}$ is (weakly) diagrammatically reducible.
\end{thm}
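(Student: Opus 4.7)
The plan is to argue by contradiction using a discrete combinatorial Gauss--Bonnet count on pictures. Suppose $\mathcal{P}^{st}$ admits a weakly aspherical weight function $\theta$ and, toward a contradiction, assume $\mathcal{P}$ is not weakly diagrammatically reducible. Then there exists a reduced, connected, strictly spherical picture $\mathbb{P}$ over $\mathcal{P}$. (The same outline handles the non-weak statement by assuming $\theta$ is aspherical and $\mathbb{P}$ is merely connected and spherical.) I will assign to each corner $\kappa$ of $\mathbb{P}$ the real number $\theta(r_\kappa)$, where $r_\kappa \in \mathbf{r}^*$ is the cyclic permutation of a relator that is read clockwise starting at $\kappa$.

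View $\mathbb{P}$ as a CW decomposition of $S^2$, obtained by collapsing $\partial D^2$ to a point, with $V$ vertices (the discs of $\mathbb{P}$), $E$ edges (the arcs), and $F$ regions, including the image $\Delta_0$ of the outer annular region. Euler's formula gives $V - E + F = 2$. Since arcs and corners alternate along the boundary of each disc and along the boundary of each region, double counting corners yields $\sum_v d(v) = 2E = \sum_\Delta d(\Delta)$. The hypothesis that $\mathbb{P}$ is reduced ensures that the clockwise boundary cycle of each region is cyclically reduced in $\mathcal{P}^{st}$, because a backtrack in such a cycle would exhibit two corners at a shared region joined by an arc with inverse labels, that is, a dipole. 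By the defining property of a picture over $\mathcal{P}$, each inner region carries an admissible cycle, and the strictly spherical hypothesis extends this to $\Delta_0$.

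Let $S = \sum_\kappa \theta(r_\kappa)$ summed over all corners of $\mathbb{P}$. Grouping first by discs, condition (I) gives
\[
S \;=\; \sum_v \sum_{\kappa \in v}\theta(r_\kappa) \;\leq\; \sum_v (d(v) - 2) \;=\; 2E - 2V.
\]
Grouping instead by regions and applying condition (II) to the admissible cycle around each region,
\[
S \;=\; \sum_\Delta \sum_{\kappa \in \Delta}\theta(r_\kappa) \;\geq\; 2F.
\]
Combining these two estimates gives $2F \leq 2E - 2V$, that is, $V - E + F \leq 0$, contradicting $V - E + F = 2$.

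For the non-weak version, only inner regions are guaranteed to carry admissible cycles, but the stronger hypothesis that $\theta$ is aspherical supplies $\sum_{\kappa \in \Delta_0}\theta(r_\kappa) \geq 0$, since this sum is the weight of a cyclically reduced cycle. Thus $S \geq 2(F - 1)$, so $V - E + F \leq 1$, again contradicting Euler's formula. I expect the main obstacle to be rigorously verifying that reduced-ness of $\mathbb{P}$ really implies cyclic reduction of each region boundary cycle in $\mathcal{P}^{st}$ and that, for an inner region, the associated closed walk is genuinely admissible in the precise sense used in condition (II); this bookkeeping is where the orientation conventions on arcs and the clockwise convention on corner labels are essential, and it is what links the purely geometric notion of a dipole to the algebraic weight hypotheses.
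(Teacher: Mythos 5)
Your argument is correct and is essentially the argument the paper itself indicates: the theorem is quoted from \cite[Theorem~2.1]{BP} and its proof is sketched in Section~\ref{sec:curredistrib}, where the weights become corner angles $\pi\theta(e)$ and conditions (I) and (II) force non-positive curvature at the regions and vertices of the dual picture, contradicting Euler's formula. Your direct double-count of $S=\sum_\kappa\theta(r_\kappa)$ over discs versus regions is the same combinatorial Gauss--Bonnet computation with the dualization and curvature language stripped away, and your identification of the one genuinely delicate point (reducedness of $\mathbb{P}$ forcing the region cycles to be cyclically reduced, hence admissible, in $\mathcal{P}^{st}$) matches where the cited proofs do the real work.
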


\subsection{Curvature distribution}\label{sec:curredistrib}

Another method that can be used to show that a given relative presentation $\mathcal{P}$ is diagrammatically reducible is \em curvature distribution \em  (see, for example, \cite{Edj91a}). Assume that  $\mathbb{P}$  is a non-trivial reduced, connected, spherical picture
over $\mathcal{P}$. We proceed as follows. Contract $\partial \mathbb{P}$ to a point, which is then deleted. This way the regions $\Delta$ of the amended picture, also called $\mathbb{P}$, are simply connected and tesselate the 2-sphere. The region obtained from the annular region is called the \em distinguished region \em $\Delta_{0}$ and its label may or may not equal the identity in $G$; whereas the label of every other region is equal to the identity. (We remark here that when drawing it is often customary to shrink each disc/vertex of $\mathbb{P}$ to a point -- an example is shown in Figure~\ref{fig:dipoleandvertex}(b). In what follows we shall simply use the term vertex.)

The next step is to define an \em angle function, \em  that is, a real-valued function on the set of corners of $\mathbb{P}$, with the further condition that the sum of all the angles at any given vertex is equal to $2\pi$. This way each vertex has zero cuvature; and if $\Delta$ is a $k$-gonal region then the curvature $c(\Delta)$ is equal to $2\pi - \sum_{i=1}^k (\pi-\theta_{i})$  where the sum is taken over the corner angles $\theta_{i}\ (1\leq i \leq k)$ of $\Delta$. Therefore the total curvature $c(\mathbb{P})$ of $\mathbb{P}$ is given by $c(\mathbb{P})=\sum_{\Delta \in \mathbb{P}} c(\Delta)$.  Given this, it is a consequence of Euler's formula, for example, that $c(\mathbb{P}) = 4 \pi$ and so $\mathbb{P}$ must contain regions of positive curvature.

For example, note that each corner $c$ of $\mathbb{P}$ corresponds to an edge $e$ in the star graph $\mathcal{P}^{st}$ and so if $\theta$ is a weight function on $\mathcal{P}^{st}$, then one possible angle function assigns $\pi \cdot \theta(e)$ to the corner $c$ of $\mathbb{P}$. Let $\hat{\mathbb{P}}$ be the dual of $\mathbb{P}$ with angles induced by those of $\mathbb{P}$. Then the condition (I) for the weight function $\theta$ in Section \ref{sec:WeightTest} enforces that each region of $\hat{\mathbb{P}}$ has non-positive curvature, while the condition (II) for $\theta$ implies that each vertex of $\hat{\mathbb{P}}$ has non-positive curvature since $\mathbb{P}$ is assumed to be reduced. This leads to a contradiction to Euler's formula (see e.g.~\cite[Sect.~1.1]{BPpi2gen}), showing that the relative presentation $\P$ is diagrammatically reducible.

Note that by assigning angles according to a weight function on the star graph, the angles are assigned in a uniform way that is not sensitive to local configurations in the given picture $\mathbb{P}$. We now describe a method, first introduced in this context in \cite{ECap}, that ultimately results in a more flexible assignment of angles.

If we believe that $\mathcal{P}$ is diagrammatically reducible and so no reduced spherical picture $\mathbb{P}$ can exist, then we try to obtain a contradiction by showing that the total curvature is less than $4\pi$. The strategy is to show that the positive curvature that may exist in $\mathbb{P}$ can be sufficiently compensated by negative curvature. To this end we systematically locate each  $\Delta \neq \Delta_{0}$ satisfying $c(\Delta)>0$ and distribute $c(\Delta)$ to near regions $\hat{\Delta}$ of $\Delta$. For such regions $\hat{\Delta}$ define $c^{*}(\hat{\Delta})$ to equal $c(\hat{\Delta})$ plus all the positive curvature $\hat{\Delta}$ receives minus all the curvature $\hat{\Delta}$ distributes as a result of this \em distribution scheme. \em It is clear that $c(\mathbb{P})$ is at most $\sum_{\Delta \in \mathbb{P}} c^{*}(\hat{\Delta})$. The final steps are to show that $c^{*}(\hat{\Delta}) \leq 0$ for $\hat{\Delta} \neq \Delta_{0}$; and that $c^{*}(\hat{\Delta}_{0})<4\pi$.

Let $v$ be a vertex of $\mathbb{P}$. A standard angle function is to assign  the angle 0 to each corner of $v$ which forms part of a region of degree two and assigns the angles ${2 \pi}/{d(v)}$ to the remaining $d(v)$ corners.  Therefore if $\Delta$ is a $k$-gonal region ($k > 2$) with vertices $v_1, \ldots, v_k$ such that $d(v_i) = d_i$ ($1 \leq i \leq k$) then $c(\Delta)=c(d_1,\ldots,d_k) = (2-k) \pi + \sum_{i=1}^k {2 \pi}/{d_i}$; or if $d(\Delta)=2$ then
$c(\Delta)=0$.  Suppose that $d(v)>2$ for each vertex $v$. Since $c(3,3,3,3,3,3)=0$ we have, for example, that if $c(\Delta)>0$ then $3 \leq d(\Delta) \leq 5$. Since the label of each
region corresponds to an admissible closed path in the star graph, if we consider Figure~\ref{fig:stargraphs}(a)
then only degree four is possible and if both $g$ and $h$ are involved this forces the label
to be (up to cyclic permutation and inversion) one of $gh^{-1}11^{-1}$, $gh^{-1}g1^{-1}$, $hg^{-1}h1^{-1}$ or $gh^{-1}gh^{-1}$ and then either $g=h$,
$h=g^{2}$, $g=h^{2}$ or $(gh^{-1})^{2} = 1$ in $G$. A similar analysis of Figure~\ref{fig:stargraphs}(b)
yields $g=h^{\pm 1}$, $g=h^{\pm 2}$ or  $h=g^{\pm 2}$ in $G$.

\section{One relator relative presentations with length four relator}\label{section:LengthFour}

Asphericity of one-relator relative presentations has been considered in the articles
\begin{itemize}
  \item[\ ] \cite{BP},\cite{BBP},\cite{ECap},\cite{HM},\cite{Davidson09},\cite{AE14},\cite{AEJ17},\cite{Ahmad},\cite{AAE14},\cite{P95},\cite{EdjJuh14}\eqnum\label{eq:articles}
\end{itemize}
and in~\cite{Met},\cite{SKK},\cite{SKK6}. Those listed at~(\ref{eq:articles}) contain classifications of aspherical one-relator relative presentations when the relator has free product length four; that is for relative presentations %
\[ \mathcal{Q}=\pres{G,x}{x^lgx^kh}\]
for certain values of the parameters $l>0$, $k\neq 0$ and where $g,h$ are non-trivial elements of $G$. Specifically, they
provide detailed asphericity studies for the relative presentations $\mathcal{Q}$ for the cases $\{l,k\}=\{2,1\},\{3,1\},\{4,1\},\{3,2\},\{n,1\}$\ ($n\geq 2$), $\{l,k\}$ ($l,k>0$), $\{2,-1\}$, $\{3,-1\}$, $\{n,-1\}$ ($n\geq 4$), $l>0,k<0$ and $l>2|k|$ or $|k|>2l$.  Our purpose in this section is to survey the results from these articles, bring them up to date (where appropriate) and present them under the definition of asphericity given in Definition~\ref{Definition:Aspherical}. We express some of our results in terms of the parameter $$\mu=1/|g|+1/|h|+1/|gh^{-1}|.$$

\subsection{The Platonic case}\label{sec:platonic}

We introduce the following condition:

\begin{description}[leftmargin=!,labelwidth=\widthof{($K_6^-$)}]%see
  \item[($P$)] $\mu>1$ and $g\neq h$.
\end{description}

For many values of $l,k$ it has been shown that, when ($P$) holds, spherical pictures based on the Platonic solids can be constructed, and hence $\mathcal{Q}$ is non-diagrammatically reducible. Further, in some cases these have been used to prove that $\mathcal{Q}$ is non-weakly aspherical.

Specifically, we have the following results. Suppose that ($P$) holds. If $\{l,k\} = \{2,1\}, \{3,2\}, \{4,1\}, \{n,1\}\ (n\geq 2), \{2,-1\}, \{3,-1\}$, or if $l>0,k>0$ then $\mathcal{Q}$ is non-diagrammatically reducible by~\cite[Theorem~3.4]{BP}, \cite[Lemmas~10 and~12]{HM}, \cite[Lemma~3.4]{AE14}, \cite[Lemma~4.3]{ECap}, \cite[Lemma~3.1]{AAE14}, or~\cite[Lemma~3.5(ii)]{AEJ17} respectively. If $\{l,k\}=\{3,1\}$ or $\{3,-1\}$ then $\mathcal{Q}$ is non-weakly aspherical by~\cite[Lemma~7]{BBP} or \cite[pages~71--75]{Ahmad}, respectively. If $l\geq 2, k=-1$ and $|g|,|h|\geq 3$ and ($P$) holds then $\mathcal{Q}$ is non-weakly aspherical by~\cite[Lemmas~3.15, 3.16, 3.17]{Davidson09}. We expect that these results hold more generally and may be extended to prove non-asphericity. We therefore make the following conjecture:

\begin{conj}\label{conj:platonics}
If Case ($P$) holds then $\mathcal{Q}$ is non-aspherical.
\end{conj}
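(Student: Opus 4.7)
The plan is to exhibit, for each parameter pair $(l,k)$ with $l>0$ and $k\neq 0$, a reduced connected spherical picture over $\mathcal{Q}$ modelled on a Platonic solid, and then to convert the resulting non-diagrammatic reducibility into non-asphericity in the sense of Definition~\ref{Definition:Aspherical}.

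First I would enumerate the cases. Under condition $(P)$, the triple $(|g|,|h|,|gh^{-1}|)$ must belong to the Platonic list $\{(2,2,n)\text{ with } n\geq 2,\,(2,3,3),\,(2,3,4),\,(2,3,5)\}$, since these are the only triples of integers $\geq 2$ with $\mu>1$. For each such triple the subgroup $\sgp{g,h}\leq G$ admits a natural quotient onto the rotation group $\Gamma$ of the corresponding regular solid (the dihedral case being handled by a dihedron on $n$ faces), and this is the algebraic input ensuring that the intended region labels are trivial in $G$.

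Second I would build the Platonic picture $\mathbb{P}_{l,k}$ by placing one disc of the picture at the centre of each face of $\Pi$, with alternating $\pm$ orientations dictated by a two-colouring of the faces; then connect centres of adjacent faces by arcs labelled $x$, arranged so that the sequence of corners read around each disc (with $l$ arcs on the $g$-side and $|k|$ on the $h$-side) spells a cyclic permutation of $x^l g x^k h$ or its inverse. The corner labels bordering each vertex-region of $\Pi$ are then powers of $g$, $h$, $gh^{-1}$ whose product vanishes in $\sgp{g,h}$ by the Platonic rigidity. The case-by-case constructions already in the literature (e.g.~\cite[Theorem~3.4]{BP}, \cite[Lemmas~10, 12]{HM}, \cite[Lemma~3.4]{AE14}, \cite[Lemma~4.3]{ECap}, \cite[Lemma~3.1]{AAE14}, \cite[Lemma~3.5(ii)]{AEJ17}) for small $\{l,k\}$ provide templates and verifications. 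Reducedness follows from $g\neq h$: adjacent discs in $\mathbb{P}_{l,k}$ meet via an arc whose two flanking corners carry distinct $G$-labels, so no dipole can form.

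Third I would promote reducedness to non-asphericity. Mimicking the argument in the proof of Lemma~\ref{lemma:relatorconditions}(c), lift $\mathbb{P}_{l,k}$ to a picture over the universal cover $p:\widetilde{L(\mathcal{Q})}\ra L(\mathcal{Q})$, and read off the corresponding element of the cellular chain group $H_2(\widetilde{L(\mathcal{Q})},p^{-1}(K_1))\cong\bigoplus_{\mathbf{r}}\Z G(\mathcal{Q})$ as a $\Z G(\mathcal{Q})$-linear combination of preferred lifts $\tilde{c}^{\,2}_r$. Reducedness prevents cancellation among summands at the chain level, whence the associated class in $\pi_2(L(\mathcal{Q}),K(G,1))$ is nontrivial and $\mathcal{Q}$ is non-aspherical. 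As a sanity check, if the natural map $G\ra G(\mathcal{Q})$ happens to fail injectivity, non-asphericity follows immediately from Lemma~\ref{Lemma:KG1}; and if $G$ does embed in $G(\mathcal{Q})$, Lemma~\ref{Lemma:Weak}(d) converts the weak non-asphericity already delivered by $\mathbb{P}_{l,k}$ into full non-asphericity.

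The principal obstacle is uniformity. The Platonic recipe above must be organised coherently across the four families and all admissible $(l,k)$, especially those (such as $l>2|k|$ or $|k|>2l$, studied in~\cite{AEJ17} and related papers) where the naive Platonic construction may degenerate, acquire spurious dipoles, or require a modified underlying polyhedron. Equally delicate is the chain-level nontriviality argument in the icosahedral case $(2,3,5)$ and in the dihedral case with large $n$, where the pictures are densest; here one has to track carefully the interaction between the exponents $l,k$ and the orders $|g|,|h|,|gh^{-1}|$, in particular in the resonance situations where $x^l g x^k h$ may be close to a proper power in $G\ast F$ and so the element of $\pi_2(L(\mathcal{Q}),K(G,1))$ that the picture represents must be separated from the classes coming from cyclic multiplicity of the relator.
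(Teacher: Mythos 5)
The statement you are trying to prove is stated in the paper as a \emph{conjecture}: the paper offers no proof, only partial evidence (non-diagrammatic reducibility for many $\{l,k\}$, and non-\emph{weak}-asphericity in a handful of cases), and explicitly flags the passage from reduced spherical pictures to non-asphericity as the open problem. Your proposal founders on exactly that passage. Steps one and two are a reasonable (if uncheckable in this generality) account of how the Platonic pictures are built in the cited references, but step three is wrong: the existence of a reduced connected spherical picture over $\mathcal{Q}$ does \emph{not} imply that the corresponding element of $\pi_2(L(\mathcal{Q}),K(G,1))$ is nontrivial. This is precisely the distinction between non-diagrammatic reducibility (Definition~\ref{def:DR}) and non-asphericity (Definition~\ref{Definition:Aspherical}) that the paper is at pains to separate: Example~\ref{ex:asphnotDR}(a),(b) and Example~\ref{ex:DRinf}(a) exhibit \emph{aspherical} relative presentations admitting nonempty reduced spherical pictures, so your inference fails on explicit examples appearing in the paper itself.

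Concretely, your claim that ``reducedness prevents cancellation among summands at the chain level'' is false. Two discs of a spherical picture cancel in $H_2(\widetilde{L},p^{-1}(K_1))\cong\bigoplus_{\mathbf{r}}\Z G(\mathcal{Q})$ whenever they carry the same relator with opposite orientations and equal $G(\mathcal{Q})$-translates; a dipole additionally requires them to be joined by an arc within a single region. Reducedness rules out only the latter, so the chain-level image of a reduced picture can vanish (and, even when it is a nonzero cycle, it may be a boundary of three-cells lying over $K(G,1)$, or may die under the projection from the free crossed module $\pi_2(L,K_1)$ to its abelianization). Likewise a reduced picture does not ``deliver weak non-asphericity'': weak asphericity is the vanishing of $\pi_2L(\mathcal{Q})$ (Lemma~\ref{Lemma:Weak}(a)), and nontriviality of $\pi_2$ is what needs to be proved, not what the picture hands you. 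The correct reduction is simply Lemma~\ref{Lemma:Weak}(b): non-weak-asphericity implies non-asphericity, so it would suffice to show $\pi_2L(\mathcal{Q})\neq 0$; but establishing that from a Platonic picture requires a genuinely new argument (as was done ad hoc in the few cases the paper cites), and supplying it uniformly is the actual content of the conjecture, which your proposal leaves untouched.
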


\subsection{Euclidean curvature and short admissible closed path cases}\label{sec:ListOfCases}
It is convenient to introduce the (possibly overlapping) cases
\begin{itemize}
  \item[] ($P$), ($Z$), (${M}$), ($J_4$), ($J_6$), ($K_5$), ($K_6^+$), ($K_6^-$), ($L_6$) \eqnum\label{eq:ListOfCases}
\end{itemize}
where ($P$) is defined in Section~\ref{sec:platonic} and the remaining cases are defined as follows:

\begin{description}[leftmargin=!,labelwidth=\widthof{($K_6^-$)}]
  \item[($Z$)] $g=h$ and $|g|<\infty$;
  \item[(${M}$)] $g=h^{-1}$ and $|g|<\infty$;
  \item[($J_4$)] ($g=h^2$, $|h|=4$) or ($h=g^2$, $|g|=4$);
  \item[($J_6$)] ($|g|=2$, $|h|=3$, $[g,h]=1$) or ($|g|=3$, $|h|=2$, $[h,g]=1$);
  \item[($K_5$)] ($g=h^2$, $|h|=5$) or ($h=g^2$, $|g|=5$);
  \item[($K_6^+$)] ($g=h^2$, $|h|=6$) or ($h=g^2$, $|g|=6$);
  \item[($K_6^-$)]  ($g=h^{-2}$, $|h|=6$) or ($h=g^{-2}$, $|g|=6$);
  \item[($L_6$)] ($g=h^3$, $|h|=6$) or ($h=g^3$, $|g|=6$).
\end{description}

In Section~\ref{section:Methods} we saw that, amongst others, the presence of relations
\begin{equation}
 g=h^{\pm 1}~\mathrm{or}~g=h^{\pm 2}~\mathrm{or}~h=g^{\pm 2} \label{eq:shortcycles}
\end{equation}
lead to short admissible closed paths in the star graph and so present obstacles in proving asphericity for $\mathcal{Q}$. In~\cite{BW1} it was suggested that the Euclidean curvature condition $\mu=1$
may sometimes also present a barrier to asphericity. The full picture is not yet clear, but it seems that the
condition~(\ref{eq:shortcycles}) interacts with the condition $\mu=1$ to present a liminal space where some presentations $\mathcal{Q}$ are aspherical and others are non-aspherical. While we believe that the cases listed at~(\ref{eq:ListOfCases}) capture many situations where non-aspherical presentations may occur, they are not (at least for $k<0$) the only ones, as we shall see in Examples~\ref{ex:333} and~\ref{ex:infinitewithtorsion}.

The cases listed at~(\ref{eq:ListOfCases}) satisfy either (\ref{eq:shortcycles}) or $\mu\geq 1$ or both, in Case~($P$) the group $\gpres{g,h}$ is a quotient of a spherical triangle group and in the remaining cases it is cyclic. Specifically, Cases~($P$) and ($Z$) satisfy $\mu>1$ and Cases ($Z$), (${M}$), ($J_4$), ($K_6^+$), ($K_6^-$) and ($K_5$) all satisfy~(\ref{eq:shortcycles}). The case ($K_5$) simultaneously satisfies it in two ways: $g=h^{\pm 2}$ and $h=g^{\mp 2}$. Cases ($J_4$), ($J_6$), ($K_6^-$), ($L_6$) satisfy $\mu=1$ with $(\{|g|,|h|\},|gh^{-1}|) = (\{2,4\},4)$, $(\{2,3\},6)$, $(\{3,6\},2)$, $(\{2,6\},3)$, respectively. The remaining values of $(\{|g|,|h|\},|gh^{-1}|)$ that yield $\mu=1$ are $(\{3,3\},3)$, $(\{4,4\},2)$. Examples~\ref{ex:333}(a),(b), below, show that $\mathcal{Q}$ may be non-aspherical
when $\gpres{g,h}$ is non-cyclic and $(\{|g|,|h|\},|gh^{-1}|)=(\{2,3\},6)$ or $(\{3,3\},3)$. We are not aware of any non-aspherical example in the case $(\{4,4\},2)$.

\begin{ex}[Finite groups outside the cases~(\ref{eq:ListOfCases})]\label{ex:333}
(a) Let $G$ be the group $G=\pres{g,h}{g^2,h^3,(gh)^2(g^{-1}h^{-1})^2}\cong S_3\oplus \Z_3$ and let $\mathcal{Q}=\pres{G,x}{x^2gx^{-1}h}$. Here we have $(|g|,|h|,|gh^{-1}|)=(2,3,6)$, and so $\mu=1$. Using GAP~\cite{GAP} we have that $G(\mathcal{Q})$ is a finite, solvable, group of order $27216=2^4\cdot 3^5\cdot 7$ and derived length~6. By Theorem~\ref{Theorem:Theory}(c) the presentation $\mathcal{Q}$ is non-aspherical.

(b) Let $G=\pres{g,h}{g^3,h^3,ghg^{-1}h^{-1}}\cong \Z_3\oplus \Z_3$ and let $\mathcal{Q}=\pres{G,x}{x^2gx^{-1}h}$. Here we have $(|g|,|h|,|gh^{-1}|)=(3,3,3)$, and so $\mu=1$. We have that $G(\mathcal{Q})$ is a finite, solvable, group of order $13608=2^3\cdot 3^5\cdot 7$ and derived length~5 and hence $\mathcal{Q}$ is non-aspherical.

(c) Let $G=\pres{g}{g^8}\cong \Z_8$ and let $\mathcal{Q}=\pres{G,x}{x^2g^2x^{-1}g}$.  As was essentially first observed in~\cite[Table~4]{BV}, the group $G(\mathcal{Q})$ is a finite, solvable, group of order $2361960=2^3\cdot 3^{10}\cdot 5$, and derived length~4. Hence $\mathcal{Q}$ is non-aspherical. As we shall see in Corollary~\ref{cor:l=2k=-1}, this settles a previously open case of~\cite[Theorem~1.1]{ECap}.
\end{ex}

We may also prove non-asphericity of $\mathcal{Q}$ by exhibiting an element of prohibited finite order.

\begin{ex}[Infinite groups with torsion outside the cases~(\ref{eq:ListOfCases})]\label{ex:infinitewithtorsion}

(a) Let $G=\pres{g}{g^n}$ where $n\geq 9$ is odd and let $\mathcal{Q}=\pres{G,x}{x^2g^2x^{-1}g^{-1}}$.
As in Example~\ref{ex:3mfdgps} we have that $G(\mathcal{Q})\cong F(2,n)\rtimes \Z_n$ where
\[F(2,n)=\pres{x_0,\ldots ,x_{n-1}}{x_ix_{i+1}=x_{i+2}\ (0\leq i\leq n-1)}\]
is a Fibonacci group. Setting $w=x_0x_1\ldots x_{n-1}$, we have that $w^2=1$ in $F(2,n)$ (see~\cite[page~83]{JBook2}), and by (the proof of)~\cite[Proposition~3.1]{BV} $w$ is an element of order exactly two. Thus $G(\mathcal{Q})$ contains an element of order~2 so by Theorem~\ref{Theorem:Theory}(c) we have that $\mathcal{Q}$ is non-aspherical.

(b) Let $G=\pres{g}{g^9}$ and let $\mathcal{Q}=\pres{G,x}{x^2g^2x^{-1}g}$.  Then we have that $G(\mathcal{Q})\cong H(9,3)\rtimes \Z_9$ where
\[H=H(9,3)=\pres{x_0,\ldots ,x_{8}}{x_ix_{i+3}=x_{i+1}\ (0\leq i\leq 8)}\]
is a \em Gilbert-Howie group \em which, by~\cite[Lemma~15]{COS}, is infinite. Setting $$w=x_0x_4x_8x_3x_7x_2x_6x_1x_5,$$ the diagram~\cite[Figure~2]{GW12a} (which was derived from ~\cite[Figure~4.1(i)]{ECap}) can be used to show that $w^2=1$ in $H$. Moreover the group $H/\ngpres{w}^H$ is finite of order $2^{15}\cdot 7$. Thus $w$ has order exactly two in $H$ so $G(\mathcal{Q})$ contains an element of order~2 and hence $\mathcal{Q}$ is non-aspherical, by Theorem~\ref{Theorem:Theory}(c). (This order two element $w$ was first obtained in the proof of~\cite[Lemma~4]{HW16}.)

(c) Let $G=\pres{g}{g^7}$ and let $\mathcal{Q}=\pres{G,x}{x^2g^2x^{-1}g}$. Then we have that $G(\mathcal{Q})\cong H(7,3)\rtimes \Z_7$ where
\[H=H(7,3)=\pres{x_0,\ldots ,x_{6}}{x_ix_{i+3}=x_{i+1}\ (0\leq i\leq 6)}\]
is a Gilbert-Howie group which, by~\cite[Theorem~3.3]{GH95} (due to Thomas), is infinite. (Alternatively use GAP to show that the second derived subgroup of $H$ is free abelian of rank~8). Setting $w=x_0x_3x_6x_2x_5x_1x_4$, we have that~\cite[Figure~1]{GW12a} (see also~\cite[Figure~4.1(h)]{ECap}) can be used to show that $w^2=1$ in $H$. Moreover the group $H/\ngpres{w}^H$ is finite of order $128$. Thus $w$ has order exactly~2 in $H$ so $G(\mathcal{Q})$ contains an element of order~2 and hence $\mathcal{Q}$ is non-aspherical by Theorem~\ref{Theorem:Theory}(c).
\end{ex}

In Cases ($J_4$), ($J_6$), Theorem~A of~\cite{BW1} shows that the relative presentation $\mathcal{Q}$ is aspherical only when the natural homomorphism $G \ra G(\P)$ is an isomorphism.

\begin{thm}[{{\cite[Theorem~A]{BW1}}}]\label{thm:AsphericityForJ4J6}
Suppose that Case ($J_n$) holds ($n=4$ or $6$). Then the following are equivalent:
\begin{itemize}
  \item[(a)] $\mathcal{Q}$ is aspherical;
  \item[(b)] $|l+k|=1$ and either $l\equiv 0$~mod~$n$ or $k\equiv 0$~mod~$n$;
  \item[(c)] The natural homomorphism $G \ra G(\P)$ is an isomorphism.
\end{itemize}
\end{thm}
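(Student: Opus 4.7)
The plan is to establish the cycle (c)$\Rightarrow$(a)$\Rightarrow$(b)$\Rightarrow$(c), after first reducing the coefficient group. By Lemma~\ref{Lemma:Coefficient}, we may replace $G$ with $\langle g,h\rangle$; in both Case~$(J_4)$ and Case~$(J_6)$ this subgroup is cyclic of order $n$ (in $(J_6)$ it is $\Z_2\oplus\Z_3$ generated by $gh$). Fix a generator $t$ and write $g=t^a$, $h=t^b$ with $a,b$ determined by the case.

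The implication (c)$\Rightarrow$(a) is immediate from Lemma~\ref{Lemma:G=H}, since $\mathcal{Q}$ is balanced with $|\mathbf{x}|=|\mathbf{r}|=1$. For (b)$\Rightarrow$(c): using the invariance of the asphericity/isomorphism questions under cyclic permutation and inversion of $r$, I may assume $l\equiv 0\pmod n$ and $l+k=\pm 1$. The relation $x^lgx^kh=1$ combined with $t^n=1$ then admits a short Tietze reduction that expresses $x$ as a specific power of $t$ in $G(\mathcal{Q})$; the abelianized relation $x^{l+k}gh=1$ pins down this power uniquely and confirms that the natural map $G\to G(\mathcal{Q})$ is an isomorphism.

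The main work is (a)$\Rightarrow$(b), which I would prove via the three-manifold criterion (Theorem~\ref{thm:3mfdcriterion}). A retraction $\nu:G(\mathcal{Q})\to G$ sending $x\mapsto z\in G$ exists iff $z^{l+k}(gh)=1$ for some $z\in G$, equivalently iff $\gcd(l+k,n)=1$. If this gcd condition fails, the short admissible cycles in the star graph---supplied by $g=h^{\pm 2}$ in $(J_4)$ and by the commuting $\Z_2,\Z_3$ in $(J_6)$, together with the Euclidean curvature $\mu=1$---support Platonic-style reduced spherical pictures (in the spirit of Section~\ref{sec:platonic} and Examples~\ref{ex:3mfdgps},~\ref{ex:infinitewithtorsion}), contradicting asphericity. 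When $\gcd(l+k,n)=1$, I would identify the kernel of $\nu$ as a cyclically presented group of Fibonacci/Sieradski/Gilbert--Howie type, and show via~\cite{HKM},\cite{CHK} and related structural results that it is a three-manifold group, so that $G(\mathcal{Q})$ is a virtual three-manifold group. Theorem~\ref{thm:3mfdcriterion}(b) then forces $G\to G(\mathcal{Q})$ to be an isomorphism, and a final abelianization and order computation translates this into the numerical condition $|l+k|=1$ with $l\equiv 0$ or $k\equiv 0\pmod n$.

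The hardest step will be the uniform identification of $G(\mathcal{Q})$ as a virtual three-manifold group across the parameter range $\gcd(l+k,n)=1$; I expect this to require a case-by-case analysis of the cyclic presentations arising as kernels of the retraction, possibly drawing on Heegaard-splitting-style constructions analogous to those used to establish the three-manifold structure of Fibonacci and Sieradski groups, and on the $G$-shift symmetry technology of \cite{BShift}.
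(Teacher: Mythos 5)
First, note that the paper does not prove this theorem: it is quoted from \cite[Theorem~A]{BW1}, so the only in-paper benchmark is the proof of the companion result, Theorem~\ref{thm:ZM}, for Cases ($Z$) and (${M}$). Measured against either, your proposal has the right easy step --- (c)$\Rightarrow$(a) via Lemma~\ref{Lemma:G=H} is exactly what is intended --- but both substantive implications have genuine gaps. For (b)$\Rightarrow$(c), the assertion that a ``short Tietze reduction'' plus the abelianized relation forces $x$ to equal a power of $t$ in $G(\mathcal{Q})$ is not a proof: abelianization only pins down the image of $x$ in $G(\mathcal{Q})^{\mathrm{ab}}$, and showing that the two-generator group $\pres{t,x}{t^n,\,x^l t^a x^k t^b}$ actually collapses to $\Z_n$ is real work (already for $\mathcal{J}=\pres{\pres{g}{g^4},x}{x^4gx^{-3}g^2}$ of Example~\ref{ex:finiteorderx} there is no one-line derivation of $x=g$). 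This is precisely the step that the proof of Theorem~\ref{thm:ZM} outsources to the finiteness classifications of \cite{BW2} and \cite{McD17}.

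The implication (a)$\Rightarrow$(b) is where the proposal would break down. First, exhibiting reduced spherical pictures (Platonic or Euclidean) refutes only diagrammatic reducibility, not asphericity; the paper is emphatic about this distinction (Example~\ref{ex:asphnotDR}, and the remark before Table~\ref{tab:Ocaseslk>0} that constructed spheres ``do not prove non-asphericity''), so this branch of your argument establishes the wrong conclusion. Second, the application of Theorem~\ref{thm:3mfdcriterion} requires $G(\mathcal{Q})$ to be a virtual three-manifold group, and you offer no reason why the kernel of the retraction should be a three-manifold group across the whole parameter range $\gcd(l+k,n)=1$; the Fibonacci and Sieradski results of \cite{HKM},\cite{CHK} are special to particular cyclic presentations and do not extend to the general $(J_4)$, $(J_6)$ families. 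The argument that actually closes this direction (in \cite{BW1}, and in the analogous proof of Theorem~\ref{thm:ZM} given here) instead uses Theorem~\ref{Theorem:Theory}(c): one shows that if $\mathcal{Q}$ were aspherical then certain elements would have finite order, forcing $G(\mathcal{Q})$ to be finite and conjugate-into-$G$ constraints to hold, and then compares the explicitly computed order of $G(\mathcal{Q})$ with $|G|$. You would need to replace both halves of your (a)$\Rightarrow$(b) with arguments of this kind.
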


We now prove the corresponding result for Cases ($Z$) and (${M}$):

\begin{thm}\label{thm:ZM} If the Case ($Z$) or (${M}$) holds, then the following are equivalent:

\begin{enumerate}
\item[(a)] $\mathcal{Q}$ is aspherical;
\item[(b)] $|l+k| = 1$ and $l=0$ or $k= 0$;
\item[(c)] The natural homomorphism $G \ra G(\P)$ is an isomorphism.
\end{enumerate}
\end{thm}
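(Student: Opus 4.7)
The plan is to establish the cyclic chain $(c) \Rightarrow (a) \Rightarrow (b) \Rightarrow (c)$. The implication $(c) \Rightarrow (a)$ is immediate from Lemma~\ref{Lemma:G=H}, since $\mathcal{Q}$ is a balanced one-relator relative presentation.

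For $(b) \Rightarrow (c)$, I would proceed by a Tietze argument in each subcase. When $l=0$ and $|k|=1$, the relator in Case~$(Z)$ reduces to $g x^{\pm 1} g$, yielding $x = g^{\mp 2}\in G$; in Case~$(M)$ it reduces to $g x^{\pm 1} g^{-1}$, yielding $x = 1\in G$. The symmetric subcases $k=0$ and $|l|=1$ are analogous. In each scenario the generator $x$ is eliminable and the natural map $G \to G(\mathcal{Q})$ is an isomorphism.

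The substantive direction is $(a) \Rightarrow (b)$. Assuming $\mathcal{Q}$ is aspherical, I would rule out every way $(b)$ can fail. First, if $l=k=0$ then $r = g^{\pm 2}\in G$, contradicting Lemma~\ref{lemma:relatorconditions}(a). Second, if exactly one of $l,k$ vanishes and the other has absolute value at least two, say $l=0$ and $|k|\geq 2$, I would apply Lemma~\ref{lem:relativelyprime} with $d=k$: the reduced relative presentation $\mathcal{V} = \pres{G,y}{gyh}$ is balanced and Tietze-equivalent to $G$, hence aspherical by Lemma~\ref{Lemma:G=H}, but the element $y$ equals $g^{-2}$ (Case~$(Z)$) or $1$ (Case~$(M)$) in $G(\mathcal{V}) = G$ and so has finite order. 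Lemma~\ref{lem:relativelyprime} then forces $\mathcal{Q}$ to be non-aspherical, a contradiction; the symmetric case $k=0$, $|l|\geq 2$ is handled identically.

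The main obstacle is the remaining subcase of $(a)\Rightarrow (b)$: both $l$ and $k$ nonzero. Here I anticipate invoking the three-manifold criterion (Theorem~\ref{thm:3mfdcriterion}). In Case~$(M)$ the relator has $g$-exponent sum $0$, so the assignment $\nu(x)=1$, $\nu(g)=g$ defines a retraction $G(\mathcal{Q}) \to G$; in Case~$(Z)$ a retraction of the form $\nu(x)=g^a$, $\nu(g)=g$ exists whenever $\gcd(l+k,|g|)$ divides $2$. The relator $r = x^l g x^k g^{\pm 1}$ presents $G(\mathcal{Q})$ as a quotient of an HNN-style amalgamation over the finite cyclic group $\langle g\rangle$, and small instances (e.g.\ $|g|=2$, $l=k=1$ yielding $G(\mathcal{Q})\cong D_\infty$, which is virtually $\Z$) suggest that $G(\mathcal{Q})$ is always virtually a three-manifold group; Theorem~\ref{thm:3mfdcriterion}(b) then equates asphericity with $G \to G(\mathcal{Q})$ being an isomorphism, and a direct examination of the relator shows this fails whenever both $l$ and $k$ are nonzero. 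The hardest part will be verifying the virtual three-manifold property uniformly in $(l,k)$ and dispatching the exceptional Case~$(Z)$ configurations where no such retraction exists; for these I would fall back on explicit reduced spherical picture constructions along the lines of Section~\ref{sec:platonic}, or apply Theorem~\ref{thm:zero divisor} to detect non-weak asphericity when the $x$-exponent sum $l+k$ is nonzero.
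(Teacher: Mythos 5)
Your implications (c)~$\Rightarrow$~(a) and (b)~$\Rightarrow$~(c), and your disposal of the degenerate subcases of (a)~$\Rightarrow$~(b) (relator in $G$; one exponent zero and the other of modulus at least two, via Lemma~\ref{lem:relativelyprime}), are all sound. The genuine gap is in the main case where both $l$ and $k$ are nonzero. Your plan rests on Theorem~\ref{thm:3mfdcriterion}, which requires $G(\mathcal{Q})$ to be a virtual three-manifold group; this is known only for isolated families (the Fibonacci and Sieradski examples of Example~\ref{ex:3mfdgps}), and there is no uniform reason it should hold for $\pres{\sgp{g},x}{g^n,x^lgx^kg^{\pm1}}$ across all $l,k$. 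Extrapolating from $|g|=2$, $l=k=1$ is not a proof, and you acknowledge as much. Your further claim that ``a direct examination of the relator'' shows $G\ra G(\mathcal{Q})$ is not an isomorphism when $l,k\neq 0$ is also false: for instance in Case~($Z$) with $l=2$, $k=-1$ the abelianization of $\pres{g,x}{g^n,x^2gx^{-1}g}$ collapses to $\Z_n$, and detecting that the group itself does not requires knowing the Sieradski groups are infinite. Finally, your fallbacks cannot close the gap: reduced spherical pictures only establish non-diagrammatic-reducibility, not non-asphericity (the paper is at pains to separate these notions, cf.\ Example~\ref{ex:asphnotDR}), and Theorem~\ref{thm:zero divisor} runs in the wrong direction --- it extracts zero divisors \emph{from} non-weak-asphericity rather than providing a test for it.

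The paper's route avoids all of this by never attempting to prove non-asphericity head-on in the bad cases. It proves (a)~$\Rightarrow$~(c) directly: after reducing to $\mathcal{Q}_0=\pres{\sgp{g},x}{x^lgx^kg^\epsilon}$ by Lemma~\ref{Lemma:Coefficient}, it uses the torsion and conjugacy consequences of Theorem~\ref{Theorem:Theory}(c) to show first that $l+k\neq 0$ and then that $x$ has finite order in $G_0$; for $\epsilon=1$ it passes to the central quotient $G_0/\sgp{x^{l-k}}$, identifies it with a triangle group, and uses the five-term homology sequence together with $H_2G_0=0$ to force $G_0$ finite, while for $\epsilon=-1$ it invokes the semidirect product structure of \cite[Lemma~14]{BW2}. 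Finiteness of $G_0$ plus Theorem~\ref{Theorem:Theory}(c) then yields $G_0=\sgp{g}$, i.e.\ (c). The equivalence (b)~$\iff$~(c) is not elementary either: it is quoted from the classifications of when $\pres{g,x}{g^n,x^lgx^kg^{\pm1}}$ collapses to $\Z_n$ in \cite[Theorem~A(c)]{BW2} and \cite[Theorem~3.2.2.2]{McD17}. If you want to complete your argument you should replace the three-manifold step with machinery of this kind; as written, the central case of the theorem is unproved.
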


\begin{proof}
That (c) $\Rightarrow$ (a) follows from Lemma~\ref{Lemma:G=H}. We show (b) $\iff$ (c). We have $G(\P) \cong G \ast_{\sgp{g}} G_0$ where $G_0$ has an ordinary presentation $G_0 \cong \pres{g,x}{g^n, x^lgx^kg^\epsilon}$ for some $n\geq 2$ and where $\epsilon = \pm1$. Thus $G \ra G(\P)$ is an isomorphism if and only if $G_0 = \sgp{g} \cong \Z_n$. When $\epsilon = -1$, \cite[Theorem~A(c)]{BW2} shows that $G_0 = \sgp{g} \iff$ (b) holds. Likewise, when $\epsilon = 1$, \cite[Theorem~3.2.2.2]{McD17} shows that $G_0 = \sgp{g} \iff$ (b) holds. (These two theorems classify finiteness of groups of the form $G_0 = \pres{g,x}{g^n,x^lgx^kg^\epsilon}$.) Thus (b) $\iff$ (c).

We now prove that (a) $\Rightarrow$ (c). Given that $\mathcal{Q}$ is aspherical, it suffices to prove that $G_0 = \sgp{g}$. By Lemma~\ref{Lemma:Coefficient}, the relative presentation $\mathcal{Q}_0 = \pres{\sgp{g},x}{x^lgx^kg^\epsilon}$ is aspherical and so by Theorem~\ref{Theorem:Theory}(c), it suffices to prove that $G_0 = G(\mathcal{Q}_0)$ is finite. We show that $l+k \neq 0$. Supposing otherwise, if $l=-k$, then Lemma~\ref{lemma:relatorconditions}(a) implies that $l \neq 0$ (and $g \neq 1$). We have $x^lgx^{-l}g^\epsilon = 1$ in $G(\mathcal{Q})$  so $g^{-\epsilon} \in \sgp{g} \cap x^l\sgp{g}x^{-l}$ and Theorem~\ref{Theorem:Theory}(c) implies that $x^l \in \sgp{g}$ and so $x^l$ has finite order in $G_0$, contradicting the fact that if $l = -k$, then the quotient group $G_0/\<\<g\>\>$, which is generated by $x$, is infinite cyclic.

Now suppose that $\epsilon = 1$, $l+k \neq 0$, and $\mathcal{Q}$ is aspherical. In this case, the element $x^{l-k}$ is central in $G_0 \cong \pres{g,x}{g^n, x^lgx^kg }$ by \cite[Lemma~3.2.2.2]{McD17} so asphericity of $\mathcal{Q}_0$ implies that $x^{l-k} \in \sgp{g}$ by Theorem~\ref{Theorem:Theory}(c). Now $l-k \neq 0$ because otherwise the relator $x^lgx^kg = (x^lg)^2$ is a proper power, which contradicts asphericity by Lemma~\ref{lemma:relatorconditions}(c). Thus the element $x \in G_0$ has finite order, so Theorem \ref{Theorem:Theory}(c) implies that $x$ is conjugate in $G_0$ to an element of $\sgp{g}$. Consider the central quotient

\begin{eqnarray*}
\Delta &=& G_0/\sgp{x^{l-k}} \cong \pres{g,x}{g^n, x^{l-k}, (x^lg)^2}\\
&\cong& \pres{g,x,u}{g^n, x^{l-k}, (x^lg)^2,u=x^l}\\
&\cong& \pres{g,x,u}{g^n, x^{l-k}, u^{(l-k)/(l,k)}, (ug)^2,u=x^l}\\
&\cong& \pres{g,u}{g^n, u^{(l-k)/(l,k)}, (ug)^2} \ast_{u=x^l} \pres{x}{x^{l-k}}.
\end{eqnarray*}
The fact that $x$ is conjugate to an element of $\sgp{g}$ implies that this amalgamated free product decomposition is degenerate, that is $\sgp{x} = \sgp{u} = \sgp{x^l}$, and so it follows that $(l,k) = 1$. Thus $\Delta \cong \pres{g,x}{g^n, x^{l-k}, (xg)^2}$ is the ordinary triangle group. The five-term homology sequence for the central extension is
$$
H_2G_0 \ra H_2\Delta \ra \sgp{x^{l-k}} \ra H_1G_0 \ra H_1\Delta \ra 0.
$$
Since $l+k \neq 0$, the $2\times 2$ relation matrix for $G_0 \cong \pres{g,x}{g^n, x^lgx^kg }$ has nonzero determinant and thus $H_2G_0 = 0$.  Since $x$ has finite order in $G_0$, this implies that $H_2\Delta$ is finite, which in turn implies that the triangle group $\Delta$ is finite and so $G_0$ is finite, as required.

It remains to consider the case where $\epsilon = -1$, $l+k \neq 0$, and $\mathcal{Q}$ is aspherical. We claim that $x$ has finite order in $G_0$. To see this, arguing as in the proof of \cite[Lemma 14]{BW2}, the relations $gx^kg^{-1} = x^{-l}$ and $g^n = 1$ imply that $x^{k^n-(-l)^n} = 1$, so $x$ has finite order unless $k^n-(-l)^n = 0$. If $k^n - (-l)^n = 0$, then since $k \neq -l$, it follows that $k = l \neq 0$ (and $n$ is even) and so we have $gx^lg^{-1} = x^{-l}$, whence $g^2$ commutes with $x^l$ and in particular $g^2 \in \sgp{g} \cap x^l\sgp{g}x^{-l}$. We cannot have $g^2 = 1$, for otherwise our relator is a proper power $x^lgx^lg^{-1} = (x^lg)^2$, contrary to Lemma~\ref{lemma:relatorconditions}(c), so Theorem~\ref{Theorem:Theory}(c) implies that $x^l \in \sgp{g}$ and so $x$ has finite order. Arguing as in the case $\epsilon = 1$, the fact that $x$ has finite order in $G_0$ implies that $(l,k) = 1$, and so \cite[Lemma~14]{BW2} shows that the group $G_0 \cong \pres{g,x}{g^n, x^lgx^kg^{-1}}$ is a semi-direct product $G_0 \cong \sgp{x} \rtimes \sgp{g}$. Thus $G_0$ is finite, as required.
\end{proof}

The remaining cases from~(\ref{eq:ListOfCases}) are ($K_5$), ($K_6^+$), ($K_6^-$), ($L_6$). These appear to be challenging cases and only partial information is known. We now survey the state of knowledge for the values of $\{l,k\}$ considered in the studies listed at (\ref{eq:articles}). In some articles non-diagrammatic reducibility was proven by explicitly constructing spheres though these, in themselves, do not prove non-asphericity. However if the corresponding group $G(\mathcal{Q})$ can be shown to be finite of order greater than six (which can sometimes be proved using GAP) an application of Theorem~\ref{Theorem:Theory}(c) proves non-asphericity.

Table~\ref{tab:Ocaseslk>0} summarizes what is known for each case. When $G(\mathcal{Q})$ is known to be finite of order greater than six the cell either contains an expression $[M,N]$, which means that $G(\mathcal{Q})$ is the $N$'th group of order $M$ in the Small Groups library~\cite{SmallGroupsLibrary}, or it contains a number, which means that $G(\mathcal{Q})$ is finite of that order, but does not have an entry in the Small Groups library. In these cases $\mathcal{Q}$ is non-aspherical by Theorem~\ref{Theorem:Theory}(c). The group of order $24530688=2^8\cdot 3^4\cdot 7 \cdot 13^2$ shows that $\mathcal{Q}$ is non-aspherical in the previously unresolved Case (E2) of~\cite{BBP}, and was studied in detail in~\cite[Lemma~9.7]{BogleyParker}.
In the case ($K_6$) with $l=2,k=-1$ the group $G(\mathcal{Q})$ is the infinite virtual three-manifold group from Example~\ref{ex:3mfdgps}(a), so $\mathcal{Q}$ is non-aspherical by Theorem~\ref{thm:3mfdcriterion}. A question mark in a cell indicates that it remains unknown if the presentation $\mathcal{Q}$ is aspherical; these cases are either isolated as unresolved exceptional cases or are the subject of questions on their asphericity statuses in the references given in the final column. We note that aspherical presentations $\mathcal{Q}$ exist in the case ($K_5$), as we shall see in Theorem~\ref{thm:K_5(l,-1)}.

\begin{table}[ht]
\begin{center}
\begin{tabular}{|l|l|l|l|l|l|}
\hline
$\{l,k\}$ & ($K_5$) & ($K_6^+$) & ($K_6^-$) & ($L_6$) & \textbf{Reference}\\\hline
$\{2,1\}$ & [165,1] & [378,8] & [342,7] & [342,7] & \cite{BP}\\ \hline
$\{3,1\}$ & [1100,26] &  ? & ? & 24530688& \cite{BBP}\\\hline
$\{4,1\}$ & [3775,3] & ?  & ? & ?& \cite{HM}\\\hline
$\{3,2\}$ & [2525,3] & ?  & ? & ?& \cite{HM}\\\hline
$\{n,1\}$ ($n\geq 5$)& ? & ?  & ? & ?& \cite{AE14}\\\hline
$l,k>0$,  & ? & ?  & ? & ? &\cite{AEJ17}\\
$\{l,k\}$ not above &  & & & & \\\hline
$\{2,-1\}$ & [55,1]                          & [336,210]              & Ex.~\ref{ex:3mfdgps}(a) & [54,6] &\cite{ECap}\\\hline
$\{3,-1\}$ & [110,2] & ?  & ? & 9072&\cite{AAE14}\\\hline
\end{tabular}
\end{center}
\caption{Resolved and unresolved Cases ($K_5$),($K_6^+$),($K_6^-$),($L_6$) in the studies~(\ref{eq:articles}).\label{tab:Ocaseslk>0}}
\end{table}

\subsection{Asphericity results}\label{sec:asphericitystudies}

We now survey results from the articles listed at~(\ref{eq:articles}) concerning asphericity and diagrammatic reducibility for relative presentations $\mathcal{Q} = \pres{G,x}{x^lgx^kh}$. In many cases we shall assume that none of~(\ref{eq:ListOfCases}) hold, those cases having been discussed in Sections~\ref{sec:platonic} and~\ref{sec:ListOfCases}. For example, the results discussed in those sections imply that if $\{l,k\} = \{2,1\}$ or $\{2,-1\}$ and~(\ref{eq:ListOfCases}) holds, then $\mathcal{Q}$ is not aspherical. See Theorem~\ref{thm:lk=21} and Corollary~\ref{cor:l=2k=-1} below. We first consider the cases where $l = \pm k$.

\begin{thm}[{{\cite[Lemma~3.3]{AEJ17}}}]\label{thm:l=k}
If $l=k\geq 1$ then $\mathcal{Q}$ is diagrammatically reducible if and only if $g = h$ or $|g^{-1}h|=\infty$.
\end{thm}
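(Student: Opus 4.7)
The star graph $\mathcal{Q}^{\mathrm{st}}$ for $r = x^lgx^lh$ has two vertices $x, x^{-1}$ connected by $2l$ edges: one $e_g$ labelled $g^{-1}$, one $e_h$ labelled $h^{-1}$, and $2l-2$ edges labelled $1$. Since the graph is bipartite, every reduced closed cycle alternates direction, has even length, and forces consecutive edges to be distinct.

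For the $\Leftarrow$ direction, assume first that $|g^{-1}h| = \infty$. I plan to apply the weight test (Theorem~\ref{Theorem:WeightTest}) with $\theta(e_g) = \theta(e_h) = 0$ and weight $1$ on every $1$-edge. Condition~(I) is attained with equality, so the content lies in Condition~(II): every admissible cycle should have weight at least two, equivalently it should contain at least two $1$-edges. A cycle with no $1$-edge must alternate $e_g$ and $e_h$ (otherwise reducedness fails) and so has label $(g^{-1}h)^{\pm m}$, which is nontrivial by hypothesis; a cycle with exactly one $1$-edge forces a short relation such as $g^{\pm 2} = h^{\pm 1}$ in $\sgp{g,h}$, and in every such sub-case the hypothesis $|g^{-1}h| = \infty$ makes the cyclic subgroup of $G$ containing both $g$ and $h$ infinite and hence free, so Lemma~\ref{lem:freesbgpeltsimplyDR} applies to give DR directly.

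For the $\Leftarrow$ direction when $g = h$, the relator is the proper power $(x^lg)^2$. The weight test is unavailable here because $e_ge_h^{-1}$ is an admissible length-$2$ cycle whose weight $\theta(e_g)+\theta(e_h) \ge 2$ contradicts Condition~(I). When $g$ has infinite order, Lemma~\ref{lem:freesbgpeltsimplyDR} applies since $\sgp{g} \cong \mathbb{Z}$ is free. When $g$ has finite order $n$, I would first reduce via Lemma~\ref{Lemma:Coefficient} (adapted to the DR setting) to $G = \sgp{g} \cong \mathbb{Z}_n$, and then analyse reduced spherical pictures via the lifted ordinary presentation $\pres{g,x}{g^n,(x^lg)^2}$, which is staggered in $x$ because $(x^lg)^2$ is cyclically reduced in the free product; the structure theorems of \cite[Ch.~III.9]{LyndonSchupp} (or the tower argument of \cite[Sec.~4]{Howie87}) produce a dipole along an $x$-arc in any such picture, which descends to a dipole in the relative picture for $\mathcal{Q}$.

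For the $\Rightarrow$ direction, assume $g \ne h$ and $n := |g^{-1}h| < \infty$. The cycle of length $2n$ alternating $e_g$ and $e_h^{-1}$ in $\mathcal{Q}^{\mathrm{st}}$ is reduced and admissible, and I would realise it by a concrete reduced spherical picture: arrange $2n$ discs of alternating $+$ and $-$ type in a cycle, joining each adjacent $+/-$ pair by a single $x$-labelled arc with the $g$-corners of $+$-discs and $h^{-1}$-corners of $-$-discs pointing into one pole (giving polar label $(g^{-1}h)^n = 1$) and the $h$-corners and $g^{-1}$-corners into the other pole (giving $(hg^{-1})^n = 1$), while the $2l-2$ unshared $x$-arcs of each disc are paired off with those of the next same-sign disc through small bigonal regions of trivial label. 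The hypothesis $g \ne h$ guarantees that the corner pairs meeting across any shared arc are non-cancelling, so no dipole appears. The main obstacle is the sub-case $g = h$ with $g$ of finite order: the weight test is genuinely unavailable and one must verify carefully that the dipole produced in the lifted staggered presentation lies along an $x$-arc (so as to descend correctly to the relative picture), which is ensured by selecting $x$ as the staggering witness in the lift.
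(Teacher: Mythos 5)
The paper offers no proof of this statement --- it is imported verbatim from \cite[Lemma~3.3]{AEJ17} --- so your argument has to stand on its own, and it does not quite do so. The genuine gap is exactly where you sense trouble, namely $g=h$ with $|g|=n<\infty$, but your repair fails: the lifted presentation $\pres{g,x}{g^n,(x^lg)^2}$ is \emph{not} staggered, because in the sense of \cite[Chapter~III.9]{LyndonSchupp} (and equally in the tower setting of \cite{Howie87}) every relator must involve at least one generator from the distinguished ordered set, and $g^n$ contains no occurrence of $x$; so \cite[Proposition~III.9.7]{LyndonSchupp} simply does not apply. Lemma~\ref{lem:freesbgpeltsimplyDR} is likewise unavailable, since it requires the coefficients of the relator to lie in a \emph{free} subgroup of $G$ and $\sgp{g}\cong\Z_n$ is not free. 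Even granting a dipole in some picture over the two-relator lift, it could be supported by a pair of $g^n$-discs and then leaves no trace in the relative picture.

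The good news is that the obstruction you cite to the weight test in this case is illusory, so the gap is reparable. When $g=h$ the relator $(x^lg)^2$ has a cyclic symmetry of order two, so the cyclic permutations $x^lgx^lh$ and $x^lhx^lg$ are the \emph{same} element of the set $\mathbf{r}^{*}$: the star graph has $l$ geometric edges, not $2l$, and your putative admissible length-two cycle $e_ge_h^{-1}$ is a backtrack along the single coefficient-labelled edge, hence not cyclically reduced. With one edge $e_0$ labelled $g^{-1}$ and $l-1$ edges labelled $1$, the assignment $\theta(e_0)=0$ and $\theta=1$ elsewhere satisfies Condition~(I) with equality (each edge is counted twice in the sum over the $2l$ corners) and Condition~(II) (a cyclically reduced closed path that traverses $1$-edges fewer than twice must have length two of the form $e_0^{\pm}e_j^{\mp}$, with non-trivial label $g^{\pm1}$), and all weights are non-negative, so the weight test yields diagrammatic reducibility directly. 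Two smaller points: in the non-DR direction your necklace should join \emph{adjacent, oppositely signed} discs by bands of $l$ parallel arcs, producing $l-1$ trivially labelled bigons between neighbours --- routing $2l-2$ arcs past an intervening disc to the next disc of the same sign is not planar as described; and in the $|g^{-1}h|=\infty$ case your fallback for an admissible cycle with exactly one $1$-edge should record why the coefficients then lie in a free subgroup: such a cycle forces $g$ and $h$ to be powers of $g^{-1}h$, so $\sgp{g,h}=\sgp{g^{-1}h}\cong\Z$.
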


\begin{co}\label{cor:l=k}
If $l=k\neq 0$, then $\mathcal{Q}$ is aspherical if and only if $|g^{-1}h|=\infty$.
\end{co}

\begin{proof}
If $g=h$ then the relator $x^lgx^kh$ is a proper power $(x^lg)^2$ so $\mathcal{Q}$ is non-aspherical,
by Lemma~\ref{lemma:relatorconditions}(c). Thus we may assume $g\neq h$. If $|g^{-1}h|=\infty$ then $\mathcal{Q}$ is diagrammatically reducible and orientable, hence aspherical. Thus we may assume that $|g^{-1}h|<\infty$. The relator $x^lgx^lh$ implies that $(x^lg)^2 = (g^{-1}h)^{-1}$ so $x^lg$ has finite order in $G(\mathcal{Q})$. If $\mathcal{Q}$ is aspherical then Theorem~\ref{Theorem:Theory}(c) implies that $x^lg$ is conjugate to an element of $G$, i.e.\ $x^lg=whw^{-1}$ for some $h\in G$ and $w\in G*\gpres{x}$. Thus  $x^lgwh^{-1}w^{-1}$ has exponent sum $l$ in $x$ and is equal to the identity of $G(\mathcal{Q})$. But all words that are trivial in $G(\mathcal{Q})$ are products of conjugates of elements of $G$ or of the relator and so their exponent sum is a multiple of $l+k=2l$, a contradiction.
\end{proof}

\begin{thm} If $l = -k \neq 0$, then $\mathcal{Q}$  is aspherical if and only if $|g| = |h|=\infty$.
\end{thm}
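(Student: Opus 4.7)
For the necessity direction, suppose $\mathcal{Q}$ is aspherical. By Lemma~\ref{Lemma:KG1}, $G$ embeds in $G(\mathcal{Q})$. The relation $x^lgx^{-l}=h^{-1}$ in $G(\mathcal{Q})$ forces $g$ and $h$ to have the same order in $G$, since $g^n=1$ implies $h^{-n}=x^lg^nx^{-l}=1$ and conversely. Assume for contradiction that this common order is finite; it is then at least two, because $g$ and $h$ are nontrivial. Then $h^{-1}$ is a nontrivial torsion element of $G \cap x^lGx^{-l}$, so Theorem~\ref{Theorem:Theory}(c) forces $x^l \in G$ inside $G(\mathcal{Q})$. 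On the other hand, since the relator $x^lgx^{-l}h$ becomes trivial when $G$ is killed, there is a quotient homomorphism $G(\mathcal{Q}) \ra G(\mathcal{Q})/\ngpres{G}^{G(\mathcal{Q})} \cong \sgp{x} \cong \Z$ sending $x^l$ to $l \neq 0$; this contradicts $x^l$ lying in the image of $G$.

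For the sufficiency direction, assume $|g|=|h|=\infty$ and, without loss of generality, $l > 0$. I will apply the weight test (Theorem~\ref{Theorem:WeightTest}) to the star graph $\mathcal{Q}^{\mathrm{st}}$. The $2l$ cyclic permutations of $r=x^lgx^{-l}h$ that start with $x^{\pm 1}$ produce: a loop at $x$ with label a nontrivial power of $h$, a loop at $x^{-1}$ with label a nontrivial power of $g$, and $2(l-1)$ further edges joining $x$ and $x^{-1}$, each labeled $1$. Assign weight $0$ to both loops and weight $1$ to every non-loop edge. Condition~(I) is satisfied with exact equality $\sum_{i=1}^{2l}(1-\theta_i) = 2$. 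For condition~(II): a cyclically reduced closed walk supported solely on one loop has label a nontrivial power of $g^{\pm 1}$ or $h^{\pm 1}$, nontrivial in $G$ because $|g|=|h|=\infty$; so every admissible cycle must traverse a non-loop edge, and because non-loop edges run between the two distinct vertices $x$ and $x^{-1}$, any closed walk uses an even number of them, at least two, contributing total weight $\geq 2$. All edge weights are non-negative, so the weight function is aspherical, and Theorem~\ref{Theorem:WeightTest} shows $\mathcal{Q}$ is diagrammatically reducible.

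It remains to verify the hypotheses of Theorem~\ref{thm:DRAspherical}(b). The relator $r$ is not conjugate in $G\ast F$ to its inverse, because matching cyclic permutations would force $g=g^{-1}$ and $h=h^{-1}$, impossible with $|g|=|h|=\infty$; thus $\mathcal{Q}$ is orientable. Moreover, the $x$-syllable pattern of $r$ read cyclically is $x^l \cdot x^{-l}$, whose cyclic period is exactly $2l$, so $r$ cannot be expressed as $w^e$ with $|e|\geq 2$ in $G \ast F$. Theorem~\ref{thm:DRAspherical}(b) therefore delivers asphericity. The principal technical point is the verification of condition~(II): this is precisely where the infinite order of both $g$ and $h$ is indispensable, since any nontrivial torsion in $\sgp{g}$ or $\sgp{h}$ would immediately supply an admissible cycle of weight zero and defeat the test.
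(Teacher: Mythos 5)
Your proof is correct and follows essentially the same route as the paper: necessity via Theorem~\ref{Theorem:Theory}(c) applied to the conjugation relation $h^{-1}=x^{l}gx^{-l}$ together with the retraction onto $\sgp{x}\cong\Z$, and sufficiency via orientability plus the weight test. Your explicit weight assignment (weight $0$ on the two loops, weight $1$ on the $2l-2$ connecting edges) and the non-proper-power check simply spell out what the paper compresses into ``an easy application of the weight test.''
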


\begin{proof} If $|g| = |h| = \infty$ then $\mathcal{Q}$ is orientable and an easy application of the weight test shows that $\mathcal{Q}$ is diagrammatically reducible, hence aspherical. Conversely, if $\mathcal{Q}$ is aspherical, then $1 \neq h^{-1} = x^{-k}gx^k \in G \cap x^{-k}Gx^k$ and $1 \neq g^{-1} = x^khx^{-k} \in G \cap x^kGx^{-k}$ and so Theorem~\ref{Theorem:Theory}(c) implies that $|g| = |h| = \infty$ because the elements $x^{\pm k}$ both lie outside the normal closure of $G$ in $G(\mathcal{Q})$ and so neither of the elements $x^{\pm k}$ is conjugate in $G(\mathcal{Q})$ to any element of $G$ -- see Theorem~\ref{Theorem:Theory}(c).
\end{proof}

From now on we assume that $l \neq \pm k$, in which case the relative presentation $\mathcal{Q} = \pres{G,x}{x^lgx^kh}$ is orientable with non-proper power relator. Thus if $\mathcal{Q}$ is diagrammatically reducible, then it is aspherical. If $G$ is torsion-free then $\mathcal{Q}$ is diagrammatically reducible by~\cite[Lemma~2]{SKK6}. The difficulty in proving diagrammatic reducibility comes about when $G$ has torsion. We shall use the preceding observation freely to deduce asphericity when a given result asserts diagrammatic reducibility. We start with the case $\{l,k\}=\{2,1\}$.

\begin{thm}[\cite{BP}]\label{thm:lk=21}
Let $\{l,k\}=\{2,1\}$. Then $\mathcal{Q}$ is diagrammatically reducible and hence aspherical if and only if none of the conditions (\ref{eq:ListOfCases}) hold and when $|g|<\infty$, then we have $g\neq h^2$ and $h\neq g^2$.
\end{thm}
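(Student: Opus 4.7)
The plan is to establish this classification by combining the consequences of asphericity developed in Section~\ref{section:Concepts} (which handle necessity) with the star graph weight test of Section~\ref{section:Methods} (which handles sufficiency). By interchanging $(l,k)$ and $(g,h)$ via the symmetry $x^lgx^kh \sim x^kh x^l g$, we may assume $l=2$, $k=1$, so that $\mathcal{Q} = \pres{G,x}{x^2gxh}$ and its star graph $\mathcal{Q}^{\mathrm{st}}$ has vertices $\{x,x^{-1}\}$ joined by three edges with labels $g$, $h$, and $1$ as in Figure~\ref{fig:stargraphs}(a). Since the relator has free product length four and is not a proper power in $G\ast F$, the relative presentation is orientable, so by Theorem~\ref{thm:DRAspherical}(b) diagrammatic reducibility is equivalent to asphericity; it therefore suffices to characterise diagrammatic reducibility.

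For necessity, suppose $\mathcal{Q}$ is aspherical. If (P) holds then Section~\ref{sec:platonic} produces a Platonic reduced spherical picture, and in particular Conjecture~\ref{conj:platonics} is known for $\{l,k\}=\{2,1\}$ by~\cite[Theorem~3.4]{BP}. If (Z) or (M) holds then Theorem~\ref{thm:ZM} would force $|l+k|=1$, contradicting $l+k=3$; similarly $(J_4)$ or $(J_6)$ is incompatible with asphericity via Theorem~\ref{thm:AsphericityForJ4J6}. If $(K_5)$, $(K_6^{\pm})$ or $(L_6)$ holds then the $\{2,1\}$-row of Table~\ref{tab:Ocaseslk>0} exhibits $G(\mathcal{Q})$ as a specific finite group of order strictly larger than $|G|$, so Theorem~\ref{Theorem:Theory}(c) supplies an order-preserving element of $G(\mathcal{Q})$ that cannot be conjugated into $G$, giving a contradiction. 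Finally, if $|g|<\infty$ and (say) $g=h^2$, then $|h|\in\{3,4,5,6\}$ places us respectively in $(M)$, $(J_4)$, $(K_5)$, or $(K_6^+)$, all already excluded; for $|h|\ge 7$ I would construct an explicit reduced spherical picture over $\mathcal{Q}$ built from the length-four admissible cycle $(g,h,1,h)$ in $\mathcal{Q}^{\mathrm{st}}$ supported by the relation $g=h^2$ (as in the constructions of~\cite[Section~3]{BP}), and verify that the resulting $G(\mathcal{Q})$ contains a finite-order element not conjugate into $G$, again contradicting Theorem~\ref{Theorem:Theory}(c). The case $h=g^2$ is symmetric.

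For sufficiency, assume that none of the conditions in~(\ref{eq:ListOfCases}) holds and that, when $|g|<\infty$, we also have $g\neq h^{\pm 2}$ and $h\neq g^{\pm 2}$. I would apply Theorem~\ref{Theorem:WeightTest} with the assignment $\theta(g)=\theta(h)=\tfrac12$, $\theta(1)=0$: condition (I) is immediate since the edge weights sum to $1$, so $\sum_{i=1}^4(1-\theta_i)=2$. For condition (II), note that no length-two admissible cycle exists because the labels $g,h,1$ are pairwise distinct (using the failure of (Z) and (M) together with $g,h\neq 1$), and the length-four admissible cycles are precisely those detecting one of the obstructions $|g|=2$, $|h|=2$, $|gh^{-1}|=2$, $g=h^{\pm 2}$, or $h=g^{\pm 2}$, all of which are excluded by hypothesis and by the failure of (P). Longer cyclically reduced admissible cycles are then checked to have weight $\geq 2$ by a standard enumeration in $\mathcal{Q}^{\mathrm{st}}$, using that the remaining finite configurations of $|g|,|h|,|gh^{-1}|$ all satisfy $\mu<1$ or place $\sgp{g,h}$ inside a Euclidean/hyperbolic triangle group.

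The main obstacle is the Euclidean borderline $\mu=1$: here the weight test yields admissible cycles of weight exactly $2$, and one must verify that no reduced spherical picture can be assembled without forcing a configuration already listed in (\ref{eq:ListOfCases}) or one of $g=h^{\pm 2}$, $h=g^{\pm 2}$. The finitely many residual configurations are handled by the curvature distribution technique of Section~\ref{sec:curredistrib}: positive curvature at degree-three, four, or five regions is redistributed to neighbouring regions, and the hypotheses exclude precisely those local configurations (Cases $(J_n)$, $(K_n)$, $(L_6)$) in which the redistribution fails to recover $c^*(\hat\Delta)\le 0$. The detailed case analysis, which is the combinatorial heart of the argument, is carried out in~\cite{BP}.
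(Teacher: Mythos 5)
The paper offers no proof of this statement; it is imported verbatim from \cite{BP}, so your proposal has to stand on its own. The overall architecture (Theorem~\ref{Theorem:Theory}(c) and the exceptional-case data for necessity, the weight test plus curvature distribution for sufficiency) is the right kind of argument, but there are two genuine gaps.

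First, your opening reduction, that ``diagrammatic reducibility is equivalent to asphericity'' via Theorem~\ref{thm:DRAspherical}(b), is false: that theorem gives only the implication DR $\Rightarrow$ aspherical, and Example~\ref{ex:asphnotDR} shows the converse fails. This matters in Case~($P$): for $\{l,k\}=\{2,1\}$ the cited result \cite[Theorem~3.4]{BP} produces a reduced spherical picture and hence non-diagrammatic-reducibility only; non-asphericity under ($P$) is precisely Conjecture~\ref{conj:platonics} and is open, so your assertion that the conjecture ``is known for $\{l,k\}=\{2,1\}$'' is wrong. The necessity direction must therefore be run against diagrammatic reducibility (assume DR, contradict DR directly in Case~($P$), and contradict the consequent asphericity in the remaining cases), not against asphericity throughout.

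Second, the sufficiency argument does not close. With $\theta(g)=\theta(h)=\tfrac12$, $\theta(1)=0$, the star graph of $x^2gxh$ admits a cyclically reduced closed path of length four that traverses the $g$-labelled edge and the trivially labelled edge twice each; its label is $g^{\pm 2}$ and its weight is $1$. It is admissible exactly when $|g|=2$, and this configuration is \emph{not} excluded by the hypotheses of the theorem: for instance $|g|=2$, $|h|=|gh^{-1}|=7$ gives $\mu<1$, so none of~(\ref{eq:ListOfCases}) holds and $g\neq h^2$, $h\neq g^2$, yet condition~(II) fails (similarly for $|h|=2$). Your claim that the length-four admissible cycles are ``all excluded by hypothesis and by the failure of ($P$)'' is therefore incorrect, and these are exactly the configurations where the hard combinatorial work of \cite{BP} lives; deferring them to ``curvature distribution'' without identifying them does not recover the result. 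You have also quietly strengthened the hypothesis to $g\neq h^{\pm 2}$ and $h\neq g^{\pm 2}$: the theorem permits, e.g., $g=h^{-2}$ with $|h|=7$ (only $(K_6^-)$, where $|h|=6$ and $\mu=1$, is excluded), and for $l,k>0$ the star graph of Figure~\ref{fig:stargraphs}(a) yields short admissible cycles only from $g=h$, $g=h^2$, $h=g^2$ and $(gh^{-1})^2=1$, not from the negative exponents. So as written your sufficiency argument both omits cases the theorem covers and invokes exclusions the theorem does not make.
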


For the case $\{l,k\}=\{3,1\}$ we introduce the following conditions from~\cite{BBP}:

\begin{description}
  \item[(BBP-E4)] $\{|g|,|h|\}=\{2,4\}$ and gp$\{g,h\}\cong \Z_2\oplus \Z_4$;
  \item[(BBP-E5)] $\{|g|,|h|\}=\{2,5\}$ and gp$\{g,h\}\cong \Z_2\oplus \Z_5$.
\end{description}

\begin{thm}[{{\cite[Theorem~4]{BBP}}}]\label{thm:lk=31}
Let $\{l,k\}=\{3,1\}$ and suppose that none of the conditions (\ref{eq:ListOfCases}) hold. Suppose further that neither (BBP-E4) nor (BBP-E5) hold. Then $\mathcal{Q}$ is weakly aspherical and hence aspherical.
\end{thm}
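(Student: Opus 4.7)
The plan is to establish diagrammatic reducibility of $\mathcal{Q}$ and then apply Theorem~\ref{thm:DRAspherical}(b) to conclude asphericity; weak asphericity will follow from Lemma~\ref{Lemma:Weak}(b). The preliminaries are immediate: with $\{l,k\} = \{3,1\}$ the relator $r = x^{l}gx^{k}h$ has nonzero $x$-exponent sum $l+k=4$, so it cannot be conjugate to its own inverse, and a syllable-length count in $G \ast F$ rules out $r$ being a proper power. Thus the orientability conditions of Lemma~\ref{lemma:relatorconditions} hold and the hypotheses of Theorem~\ref{thm:DRAspherical}(b) are in force.

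Suppose, for contradiction, that there is a non-trivial reduced connected spherical picture $\mathbb{P}$ over $\mathcal{Q}$. Equip $\mathbb{P}$ with the standard angle function of Section~\ref{sec:curredistrib}: corners of digonal regions receive angle $0$, and every other corner at a vertex $v$ receives the common angle $2\pi/d(v)$. Each vertex then has zero curvature, so $c(\mathbb{P}) = 4\pi$ forces positively curved regions to exist. Since $c(3,3,3,3,3,3) = 0$, any positively curved region $\Delta$ satisfies $3 \leq d(\Delta) \leq 5$ and has every vertex of degree at least $3$. The star graph $\mathcal{Q}^{\mathrm{st}}$ is Figure~\ref{fig:stargraphs}(a) with $l+k = 4$ edges labelled $g, h, 1, 1$, and as discussed at the end of Section~\ref{sec:curredistrib} the label of each inner region is an admissible closed walk in $\mathcal{Q}^{\mathrm{st}}$, which imposes short relations among $g$, $h$, $gh^{-1}$ in $G$.

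The first substantive step is to enumerate all admissible closed walks of length $3$, $4$, or $5$ in $\mathcal{Q}^{\mathrm{st}}$ and to extract the corresponding group-theoretic relations. One finds that each such relation forces $\gpres{g,h}$ into one of the Cases~(\ref{eq:ListOfCases}) or, more delicately, into (BBP-E4) or (BBP-E5): these two extra cases correspond precisely to abelian configurations of $\gpres{g,h}$ of order eight or ten that are compatible with a short cycle in $\mathcal{Q}^{\mathrm{st}}$ but that are not captured by~(\ref{eq:ListOfCases}). Under the standing hypotheses, the list of admissible positively curved region types is therefore reduced to a small explicit catalogue.

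With that catalogue in hand, I would construct a curvature distribution scheme that transfers the excess curvature from each positively curved region to nearby regions across shared arcs or vertices. The main obstacle is the local accounting: positively curved regions can cluster, so one must track cumulative inflow and verify $c^{*}(\hat{\Delta}) \leq 0$ for every non-distinguished $\hat{\Delta}$. The excluded Cases~(\ref{eq:ListOfCases}), (BBP-E4) and (BBP-E5) should each surface as the precise obstruction to $c^{*} \leq 0$ in some local configuration, so ruling them out is exactly what allows the scheme to succeed. Once the inequality is verified everywhere off $\Delta_0$, summation yields $c(\mathbb{P}) \leq c^{*}(\Delta_0) < 4\pi$, contradicting $c(\mathbb{P}) = 4\pi$. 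Hence no such $\mathbb{P}$ exists, $\mathcal{Q}$ is diagrammatically reducible, and Theorem~\ref{thm:DRAspherical}(b) completes the proof.
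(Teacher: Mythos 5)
Your proposal is a strategy outline rather than a proof: every substantive step --- the enumeration of admissible closed cycles of length $3$, $4$, $5$ in $\mathcal{Q}^{\mathrm{st}}$, the verification that each resulting relation in $G$ forces one of the excluded cases, and above all the construction and local verification of a distribution scheme with $c^{*}(\hat{\Delta})\leq 0$ --- is announced but not carried out. Those verifications are the entire content of \cite[Theorem~4]{BBP}, so as written there is nothing to check.

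More importantly, you are aiming at the wrong target. You set out to prove full diagrammatic reducibility (a dipole in every connected \emph{spherical} picture), whereas the theorem --- deliberately, in contrast with Theorems~\ref{thm:lk=41}--\ref{thm:AEJ17lk>0} --- concludes only weak asphericity. What \cite{BBP} establish is weak diagrammatic reducibility in the sense of Definition~\ref{def:weakDR}: only \emph{strictly} spherical pictures, those whose distinguished region also has trivial label, are shown to contain dipoles. The distinction is exactly where your sketch is silent: for a general spherical picture the label of $\Delta_{0}$ need not be trivial in $G$ (such pictures encode elements of $\ker(G\to G(\mathcal{Q}))$ rather than of $\pi_2$), so $\Delta_{0}$ supports no admissible cycle in the star graph and its degree and corner labels are unconstrained; your closing inequality $c^{*}(\Delta_{0})<4\pi$ is asserted with no mechanism behind it. The route the theorem actually takes is: weak diagrammatic reducibility, hence weak asphericity by Theorem~\ref{thm:DRAspherical}(b), and then asphericity via Lemma~\ref{Lemma:Weak}(d), which requires injectivity of $G\to G(\mathcal{Q})$ to be supplied separately from the theory of length-four equations over groups (compare the role of \cite{Edj91b} in Theorem~\ref{thm:Prishchepov}); your route would bypass that input, but only by proving the stronger DR statement. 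That stronger statement is in fact true under these hypotheses, but only by the later and independent Theorem~\ref{thm:AElk=1m} of \cite{AE14}, not by the argument you outline.
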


We now turn to the case $\{l,k\}=\{4,1\}$.

\begin{thm}[{{\cite[Theorem~2]{HM}}}]\label{thm:lk=41}
Let $\{l,k\}=\{4,1\}$ and suppose that none of the conditions (\ref{eq:ListOfCases}) hold. Then $\mathcal{Q}$ is diagrammatically reducible and hence aspherical.
\end{thm}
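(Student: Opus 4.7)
The plan is to reduce the theorem, via Theorem~\ref{thm:DRAspherical}(b), to diagrammatic reducibility of $\mathcal{Q}$. Up to the symmetry $r = x^lgx^kh \mapsto (x^kh)^{-1}(x^lg)^{-1}$ (which exchanges the roles of $l,k$ and replaces $g,h$ with $h^{-1},g^{-1}$), we may assume $l=4$, $k=1$. Since $l+k = 5$ and $l \neq \pm k$, the relator $r = x^4gxh$ is cyclically reduced and not a proper power in $G \ast F$, and $\mathcal{Q}$ is orientable (indeed no relator can be a cyclic permutation of its inverse given $l \neq |k|$). Thus once $\mathcal{Q}$ is shown diagrammatically reducible, asphericity follows from Theorem~\ref{thm:DRAspherical}(b).

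I would first write down the star graph $\mathcal{Q}^{\mathrm{st}}$. Following the convention of Section~\ref{sec:WeightTest}, $\mathcal{Q}^{\mathrm{st}}$ has two vertices $x, x^{-1}$ and five edges corresponding to the cyclic permutations of $x^4gxh$ that begin with a generator, labelled by the appropriate inverses of $g, h$. I would then enumerate the short admissible closed cycles in $\mathcal{Q}^{\mathrm{st}}$: any cycle of length $\leq 3$ forces a relation $g = h^{\pm 1}$ or $g = h^{\pm 2}$ or $h = g^{\pm 2}$ of small order, and any cycle of length $4$ or $5$ with trivial label produces one of the platonic-type relations $\mu \geq 1$ examined in Section~\ref{sec:ListOfCases}. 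The hypothesis that none of~(\ref{eq:ListOfCases}) holds eliminates, or pins down the exceptional type of, every admissible cycle of length at most five.

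Next I would attempt to apply the curvature distribution method of Section~\ref{sec:curredistrib}. Assume for contradiction that a reduced, connected, spherical picture $\mathbb{P}$ exists. After contracting $\partial\mathbb{P}$ to a point and removing dipoles and 2-gonal regions (which contribute zero curvature under the standard angle function $2\pi/d(v)$), we obtain a picture in which every vertex has degree at least $3$ and zero curvature, so positive curvature is concentrated in $k$-gonal regions with $3 \leq k \leq 5$. Each such region supports an admissible cycle of length $k$ in $\mathcal{Q}^{\mathrm{st}}$, and the classification above shows that only finitely many combinatorial types can occur, each forcing specific corner labels and low-degree neighbouring vertices. For each type I would inspect the local neighbourhood in $\mathbb{P}$, read off what the star graph forces about adjacent regions, and then define a distribution scheme which transfers the positive curvature $c(\Delta)$ to a designated nearby region $\hat\Delta$ of strictly negative curvature; the exclusion of~(\ref{eq:ListOfCases}) is used exactly to guarantee that such a negative neighbour always exists. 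Showing $c^*(\hat\Delta) \leq 0$ for every $\hat\Delta \neq \Delta_0$ then contradicts $c(\mathbb{P}) = 4\pi$.

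The main obstacle is the case analysis underlying the distribution scheme. Because $l+k = 5$ is relatively large, regions of degree $3$, $4$, and $5$ can all in principle occur, and a positively curved region of low degree may have all of its neighbours of low degree as well, so the redistribution has to be iterated over two- or three-step neighbourhoods rather than single edges. Organising this accounting so that no region receives more curvature than it can absorb, while simultaneously handling the distinguished region $\Delta_0$ (for which we only need $c^*(\Delta_0) < 4\pi$), is the technical heart of the argument; it is here that each of the hypotheses from~(\ref{eq:ListOfCases}) is invoked separately to eliminate a specific would-be local configuration.
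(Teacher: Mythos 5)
Your reduction of asphericity to diagrammatic reducibility is correct and matches what the survey does: for $l\neq\pm k$ the presentation $\mathcal{Q}$ is orientable with a non-proper-power relator, so Theorem~\ref{thm:DRAspherical}(b) applies. Note, however, that the paper states this theorem only as a citation of \cite[Theorem~2]{HM}, so the entire substance of the claim is the diagrammatic reducibility itself. Your outline of how to establish that --- star graph, classification of short admissible cycles, curvature distribution over a putative reduced connected spherical picture --- is the right framework and is indeed the method of the cited source and of Section~\ref{sec:curredistrib}. But the proposal stops exactly where the proof begins. The content of \cite[Theorem~2]{HM} is the case analysis and distribution scheme that you defer with ``for each type I would inspect the local neighbourhood \ldots and then define a distribution scheme'': nothing in your text identifies the actual positively curved configurations, the regions designated to absorb their curvature, or the verification that $c^{*}(\hat{\Delta})\leq 0$ for $\hat{\Delta}\neq\Delta_0$. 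In particular, the point at which each individual exclusion from~(\ref{eq:ListOfCases}) is invoked is never exhibited, so the argument cannot be checked. As written this is a plan, not a proof.

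There is also a concrete slip in the set-up. For $l=4$, $k=1$ both exponents are positive, so the star graph is the multigraph of Figure~\ref{fig:stargraphs}(a): every edge joins $x$ to $x^{-1}$, hence every closed cycle has even length and every region of a picture over $\mathcal{Q}$ has even degree. Your claim that ``regions of degree $3$, $4$, and $5$ can all in principle occur'' is therefore false; with all vertices of degree at least three, the only candidates for positive curvature are regions of degree two and four, an observation the paper itself makes at the end of Section~\ref{sec:curredistrib}. The error is in the harmless direction --- you are contemplating cases that cannot arise --- but it indicates that the combinatorics of the star graph has not actually been worked through, which reinforces that the decisive part of the argument is missing.
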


Generalizing Theorems~\ref{thm:lk=31} and~\ref{thm:lk=41} we have the following for the case $\{n,1\}$:%$\{l,k\}=\{n,1\}$: %AMENDED THIS TO AVOID BAD PAGE BREAK

\begin{thm}[{{\cite[Theorem~1.1]{AE14}}}]\label{thm:AElk=1m}
Let $\{l,k\}=\{n,1\}$ where $n\geq 3$ and suppose that none of the conditions (\ref{eq:ListOfCases}) hold. Then $\mathcal{Q}$ is diagrammatically reducible and hence aspherical.
\end{thm}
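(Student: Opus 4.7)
The plan is curvature distribution. Since $l \ne k$, the relator $r = x^l g x^k h$ is not a proper power, and since the hypothesis excludes cases ($Z$) and ($M$) from~(\ref{eq:ListOfCases}), the presentation $\mathcal{Q}$ is orientable with no proper-power relator. By Theorem~\ref{thm:DRAspherical}(b), it then suffices to prove diagrammatic reducibility. The symmetry $x \mapsto x^{-1}$ combined with inversion of the relator (which replaces $g,h$ by $g^{-1},h^{-1}$ and preserves all the conditions in~(\ref{eq:ListOfCases})) lets us assume $l = n \geq 3$ and $k = 1$. I would then assume for contradiction that a reduced, connected, spherical picture $\mathbb{P}$ over $\mathcal{Q}$ exists and derive $c(\mathbb{P}) < 4\pi$, contradicting Euler's formula.

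The first substantive task is to enumerate admissible cycles of short length in the star graph $\mathcal{Q}^{\mathrm{st}}$. By Figure~\ref{fig:stargraphs}(a), $\mathcal{Q}^{\mathrm{st}}$ has vertices $x, \bar x$ joined by $n+1$ parallel edges labelled $g, h, 1, \ldots, 1$. A direct enumeration shows that every admissible cycle of length at most five must either have all corner labels trivial (giving a dipole, ruled out by reducedness of $\mathbb{P}$) or force a relation of the form $g = h^{\pm 1}$, $g^{\pm 1} = h^{\pm 2}$, $h^{\pm 1} = g^{\pm 2}$, $g = h^{\pm 3}$, or $h = g^{\pm 3}$, combined with a bound on one of $|g|, |h|, |gh^{-1}|$, or placing $\gpres{g,h}$ as a quotient of a Platonic triangle group. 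Each such configuration is captured by one of the conditions listed at~(\ref{eq:ListOfCases}) and is therefore excluded by hypothesis.

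Next, apply the standard angle function of Section~\ref{sec:curredistrib}: angle $0$ to each corner bounding a degree-$2$ region and $2\pi/d(v)$ to each remaining corner at a vertex $v$. Each vertex has degree $l + k = n + 1 \geq 4$, and the absence of dipoles forces every interior vertex to have degree at least $3$; the formula $c(\Delta) = (2-k)\pi + 2\pi\sum_{i=1}^k 1/d_i$ for a $k$-gonal region, together with $c(3,3,3,3,3,3) = 0$, shows that only regions of degree $3$, $4$, or $5$ can carry positive curvature. By the preceding paragraph, most such regions are already ruled out. The residual positive-curvature regions are then handled by a distribution scheme: for each such $\Delta$, identify a nearby region $\hat\Delta$ whose vertex degrees make $c(\hat\Delta)$ sufficiently negative to absorb $c(\Delta)$, transfer the curvature, and verify that $c^*(\hat\Delta) \leq 0$ for every $\hat\Delta \ne \Delta_0$ while $c^*(\Delta_0) < 4\pi$.

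The hard part is the combinatorial bookkeeping of this distribution scheme. Multiple positively curved regions can cluster near a single potential recipient, so one must control the cumulative curvature any region receives and handle carefully the configurations abutting $\Delta_0$. Designing a scheme that works uniformly across all the surviving local label patterns, while ruling out every escape route, is the technical heart of~\cite[Theorem~1.1]{AE14}; the list~(\ref{eq:ListOfCases}) corresponds exactly to the obstructions where no such finite distribution scheme can succeed, because Platonic-type spherical pictures can then actually be realised.
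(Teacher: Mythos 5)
The paper does not prove this statement: Theorem~\ref{thm:AElk=1m} is quoted verbatim from \cite[Theorem~1.1]{AE14} with no argument given in the survey, so the only ``proof'' in the paper is the citation. Your preliminary reductions are essentially correct and match how the survey frames such results: since $\{l,k\}=\{n,1\}$ with $n\geq 3$ forces $l\neq\pm k$, the relator is not a proper power and $\mathcal{Q}$ is orientable (note this follows from $l+k\neq 0$, i.e.\ nonzero exponent sum, rather than from excluding ($Z$) and (${M}$) as you claim), so Theorem~\ref{thm:DRAspherical}(b) reduces the problem to diagrammatic reducibility; and the symmetry swapping $(l,g)$ with $(k,h)$ (cyclic permutation of the relator is cleaner than your $x\mapsto x^{-1}$ plus inversion) lets one take $l=n$, $k=1$. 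You have also correctly identified curvature distribution on a putative reduced connected spherical picture as the method of \cite{AE14}.

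However, what you have written is a strategy outline, not a proof, and the gap is precisely where all the mathematical content lives. Your second paragraph asserts that ``a direct enumeration shows'' that short admissible cycles in $\mathcal{Q}^{\mathrm{st}}$ are all captured by the conditions at~(\ref{eq:ListOfCases}), but no enumeration is performed; note that for $l,k>0$ the star graph is bipartite on $\{x,\bar{x}\}$, so cycles of odd length do not exist and the relations such as $g=h^{\pm 3}$ with $|h|=6$ (case ($L_6$)) arise not from cycles of length at most five but from the interaction between degree-$4$ regions and the orders $|g|,|h|,|gh^{-1}|$ read off higher-degree regions. Your third and fourth paragraphs then describe the shape of a distribution scheme without specifying which regions donate to which, and you explicitly concede that ``the combinatorial bookkeeping'' constituting ``the technical heart of \cite[Theorem~1.1]{AE14}'' is not supplied. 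Finally, your closing claim that the list~(\ref{eq:ListOfCases}) corresponds \emph{exactly} to the obstructions is overstated even within the survey's own account: Conjecture~\ref{conj:platonics} is open, several entries of Table~\ref{tab:Ocaseslk>0} in the columns ($K_6^{\pm}$), ($L_6$) are unresolved for $\{n,1\}$ with $n\geq 5$, and Examples~\ref{ex:333} and~\ref{ex:infinitewithtorsion} show that for $k<0$ the list is not exhaustive. As it stands the proposal is a correct plan with the proof deferred to the reference it is meant to reprove.
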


Note that Theorem~\ref{thm:AElk=1m} shows that $\mathcal{Q}$ is diagrammatically reducible and aspherical in the previously unresolved cases (BBP-E4),(BBP-E5) of Theorem~\ref{thm:lk=31}.

We now consider cases $\{l,k\}$ with $l>0,k>0$ that are not (necessarily) of the form $\{n,1\}$. For the case $\{l,k\}=\{3,2\}$ we introduce the following condition, (which is extracted from the statement of~\cite[Theorem~1]{HM}).\footnote{There are typos in the statements of Lemma 9 and of parts (3) and (4) of Theorem 1 of \cite{HM}. The proof of \cite[Lemma 9]{HM} employs coset enumeration to check that the groups defined by $\pres{h,x}{h^k, x^3h^2x^2h}$ are finite for $2 \leq k \leq 5$. As in \cite[Question 2]{HM}, the asphericity status of $\pres{\sgp{g},x}{x^3gx^2g^2}$ is unresolved if $6<|g|<\infty$.}

\begin{description}
  \item[(HM-E)] ($g=h^2$, $6<|h|<\infty$) or ($h=g^2$, $6<|g|<\infty$).
\end{description}

\begin{thm}[{{\cite[Theorem~1]{HM}}}]\label{thm:lk=32}
Let $\{l,k\}=\{3,2\}$ and suppose that none of the conditions (\ref{eq:ListOfCases}) hold. Suppose further that (HM-E) does not hold. Then $\mathcal{Q}$ is diagrammatically reducible and hence aspherical.
\end{thm}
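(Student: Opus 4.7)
The strategy is to prove the stronger conclusion that $\mathcal{Q}$ is diagrammatically reducible; asphericity will then follow from Theorem~\ref{thm:DRAspherical}(b), since $\{l,k\} = \{3,2\}$ forces $l \neq \pm k$, so the relator $r = x^l g x^k h$ is not a proper power in $G \ast F$, and the exclusion of (Z) and (M) forces $g \neq h^{\pm 1}$, guaranteeing orientability in the sense of Definition~\ref{def:orientable}. Toward a contradiction, I would suppose that a non-trivial, reduced, connected, spherical picture $\mathbb{P}$ over $\mathcal{Q}$ exists, and aim to violate the Euler identity $c(\mathbb{P}) = 4\pi$ by a curvature distribution argument of the type described in Section~\ref{sec:curredistrib}.

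The first task is to record the combinatorics of $\mathcal{Q}^{\mathrm{st}}$, whose vertices are $x, x^{-1}$ and whose five edges carry labels $1, 1, 1, g^{-1}, h^{-1}$ on a chosen orientation. Each vertex of $\mathbb{P}$ has degree $l + k = 5$, so the standard angle assignment of $2\pi/5$ to each corner gives a $d$-gonal region curvature $c(\Delta) = 2\pi - 3d\pi/5$, strictly positive only for $d \in \{2, 3\}$. I would then classify the short admissible cycles in $\mathcal{Q}^{\mathrm{st}}$: every length-two cycle uses two of the three $1$-edges; length-three cycles force $g = h^{\pm 1}$, which is forbidden by (Z) and (M); and length-four and length-five cycles involving any non-trivial label force short relations among $g, h$ of the form $g = h^{\pm 2}$, $h = g^{\pm 2}$, $g = h^{\pm 3}$, $h = g^{\pm 3}$, $[g, h] = 1$ with small orders, or small powers of $gh^{-1}$, each of which is precisely ruled out by the combined exclusion of (P), ($J_4$), ($J_6$), ($K_5$), ($K_6^+$), ($K_6^-$), ($L_6$), and (HM-E). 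The upshot is that every positive-curvature region of $\mathbb{P}$ must be bounded by arcs whose $G$-corner labels are all trivial.

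The crux is then to design a curvature distribution absorbing the positive curvature of these remaining degree-two and degree-three regions into surrounding regions of sufficiently high degree. The main obstacle lies in the local analysis around a degree-three region with three trivial $G$-labels: reading off the vertex labels and using the $G$-identities around each neighboring region, one must argue that at least one neighbor has degree $\geq 6$ (contributing negative curvature of magnitude $\geq 8\pi/5$) to supply the compensating curvature. I expect this to require a careful enumeration of the finitely many possible local configurations, together with appeal in borderline cases to finiteness-by-coset-enumeration arguments of the type used in~\cite[Lemma~9]{HM} to verify that auxiliary one-relator quotients $\pres{h,x}{h^n, x^3 h^2 x^2 h}$ are finite for small values of $n$. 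Once the distribution is shown to give $c^{\ast}(\Delta) \leq 0$ for every interior region and $c^{\ast}(\Delta_0) < 4\pi$ for the distinguished region, the inequality $c(\mathbb{P}) < 4\pi$ contradicts Euler's formula and the proof is complete.
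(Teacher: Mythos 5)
First, a point of orientation: the survey gives no proof of this statement --- it is quoted from \cite[Theorem~1]{HM} --- so the only material to compare against is the general framework of Sections~\ref{sec:WeightTest}--\ref{sec:curredistrib} and the reduction stated at the start of Section~\ref{sec:asphericitystudies}. Your opening reduction matches the paper exactly and is correct: $\{l,k\}=\{3,2\}$ gives $l\neq\pm k$, so the relator is not a proper power; excluding ($Z$) and (${M}$) (together with $g,h\neq 1$) gives orientability; and Theorem~\ref{thm:DRAspherical}(b) then converts diagrammatic reducibility into asphericity. Your identification of the method --- curvature distribution on a putative reduced connected spherical picture, driven by a classification of short admissible cycles in $\mathcal{Q}^{\mathrm{st}}$, with the excluded cases~(\ref{eq:ListOfCases}) and (HM-E) accounting for the short relations $g=h^{\pm1}$, $g=h^{\pm2}$, $h=g^{\pm2}$ and the condition $\mu\geq 1$ --- is also the right one and is how \cite{HM} proceeds.

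As a proof, however, the proposal has a genuine gap and a concrete error. The gap: the entire content of \cite[Theorem~1]{HM} is the distribution scheme itself, namely the enumeration of local configurations around positively curved regions and the verification that $c^{*}(\hat\Delta)\leq 0$; you explicitly leave this as something you ``expect'' to be able to do, so nothing in the proposal certifies that such a scheme exists. The error: for $l,k>0$ the star graph (Figure~\ref{fig:stargraphs}(a)) is a bipartite multigraph on the two vertices $x,\bar{x}$, so every closed cycle, and hence every region of a picture over $\mathcal{Q}$, has \emph{even} degree. Your stated ``crux'' --- the analysis of degree-three regions with three trivial corner labels --- therefore concerns configurations that cannot occur, while the configurations that actually carry the positive curvature (degree-two regions, and their interaction with degree-four regions labelled by words such as $gh^{-1}11^{-1}$, as discussed in Section~\ref{sec:curredistrib}) are not addressed. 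Finally, the appeal to the coset enumerations of \cite[Lemma~9]{HM} is misplaced: those computations establish finiteness of the groups $\pres{h,x}{h^{n}, x^{3}h^{2}x^{2}h}$ for small $n$, which belongs to the non-aspherical side of the classification (the cases already excluded by ($J_4$), ($K_5$), ($K_6^+$) and (HM-E)); finiteness of an auxiliary quotient cannot be used to produce a dipole in a reduced spherical picture.
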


To consider the general case $l>0,k>0$, $l\neq k$ we introduce the following condition from~\cite{AEJ17}:
\begin{description}
  \item[(AEJ-E)]
  \begin{itemize}
    \item[(i)] $g=h^2$, $6<|h|<\infty$, $l<k<2l$; or
    \item[(ii)] $h=g^2$, $6<|g|<\infty$, $k<l<2k$; or
    \item[(iii)] $h=g^2$, $6<|g|<\infty$, $l<k<2l$; or
    \item[(iv)] $g=h^2$, $6<|h|<\infty$, $k<l<2k$.
  \end{itemize}
\end{description}

\begin{thm}[{{\cite[Theorem~1.1]{AEJ17}}}]\label{thm:AEJ17lk>0}
Let $l>0,k>0$, $l\neq k$, $\{l,k\}\neq\{2,1\}$ and suppose none of the conditions (\ref{eq:ListOfCases}) hold. Suppose further that (AEJ-E) does not hold. Then $\mathcal{Q}$ is diagrammatically reducible and hence aspherical.
\end{thm}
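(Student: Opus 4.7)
The approach is via the curvature distribution method of Section~\ref{sec:curredistrib}. Since $l,k>0$ with $l\neq k$, the relator $x^lgx^kh$ is not a proper power in $G\ast F$, and one checks directly that $\mathcal{Q}$ is orientable in the sense of Definition~\ref{def:orientable}; hence by Theorem~\ref{thm:DRAspherical}(b) it suffices to establish diagrammatic reducibility. By Lemma~\ref{Lemma:Coefficient} we may further assume $G=\gpres{g,h}$.

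Suppose for a contradiction that there is a non-trivial reduced, connected, spherical picture $\mathbb{P}$ over $\mathcal{Q}$. The first step is to enumerate the admissible cycles in the star graph $\mathcal{Q}^{\mathrm{st}}$ of Figure~\ref{fig:stargraphs}(a), which has $l+k$ edges. Short admissible cycles force relations among $g,h$ of one of the types listed at~(\ref{eq:shortcycles}), or else constrain $\gpres{g,h}$ to be a quotient of a spherical triangle group. Since Cases~($P$), ($Z$), ($M$), ($J_4$), ($J_6$), ($K_5$), ($K_6^{\pm}$), ($L_6$) from~(\ref{eq:ListOfCases}) and the condition~(AEJ-E) are all excluded, a complete enumeration yields explicit lower bounds on the degrees of the inner regions of $\mathbb{P}$, depending on the local $g,h$-structure around each vertex.

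Next, equip $\mathbb{P}$ with the standard angle function: corners of degree-$2$ regions receive angle $0$, while the remaining $d(v)$ corners at a vertex $v$ each receive $2\pi/d(v)$. Every vertex then has zero curvature, and a region $\Delta$ of degree $d\geq 3$ with surrounding vertex degrees $d_1,\ldots,d_d$ has curvature $c(\Delta)=(2-d)\pi+\sum_{i=1}^d 2\pi/d_i$. Since $c(3,3,3,3,3,3)=0$, any region of positive curvature has degree $d\leq 5$ and at least one vertex of degree $\leq 5$; combined with the admissible-cycle enumeration above, this pins down a short explicit list of local configurations producing positive-curvature regions. The heart of the proof is then to design a distribution scheme transferring positive curvature from each such $\Delta$ to near regions $\hat{\Delta}$ so that the compensated curvature satisfies $c^{\ast}(\hat{\Delta})\leq 0$ for every $\hat{\Delta}\neq\Delta_0$ while $c^{\ast}(\Delta_0)<4\pi$, yielding $c(\mathbb{P})<4\pi$ and contradicting the Euler-formula value $4\pi$. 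The main obstacle — and the reason for the exceptional condition~(AEJ-E) — lies in the subcases where $g=h^{\pm 2}$ or $h=g^{\pm 2}$ with $6<|\gpres{g}|<\infty$: here short admissible cycles persist and positive curvature from small-degree regions must be chased through several types of adjacent vertex configurations. The exponent inequalities in (AEJ-E) isolate precisely those configurations where such a distribution would fail, so their exclusion permits the argument to close, paralleling the earlier results, Theorems~\ref{thm:lk=21}--\ref{thm:AElk=1m}.
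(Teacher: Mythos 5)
There is a genuine gap, and it is worth being precise about what kind. First, note that the paper itself does not prove this statement: Theorem~\ref{thm:AEJ17lk>0} is imported verbatim from \cite[Theorem~1.1]{AEJ17}, so the ``paper's own proof'' is a citation. Your outline correctly identifies the method used in that source --- the curvature distribution technique of Section~\ref{sec:curredistrib}, first applied in this context in \cite{ECap} --- and your preliminary reductions are sound: orientability and the non-proper-power condition follow from $l,k>0$, $l\neq k$ (the $x$-exponent sum $l+k>0$ rules out Lemma~\ref{lemma:relatorconditions}(a),(b) failing and rules out proper powers), so Theorem~\ref{thm:DRAspherical}(b) does reduce the problem to diagrammatic reducibility, and Lemma~\ref{Lemma:Coefficient} does allow $G=\gpres{g,h}$.

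The problem is that everything after that point is a description of what a proof would contain rather than a proof. The three steps you defer --- (i) the enumeration of admissible cycles in the star graph of Figure~\ref{fig:stargraphs}(a) for \emph{arbitrary} $l,k>0$ and the resulting degree bounds on inner regions, (ii) the classification of local configurations that can carry positive curvature under the standard angle function, and (iii) the construction of a distribution scheme together with the verification that $c^{\ast}(\hat{\Delta})\leq 0$ for all $\hat{\Delta}\neq\Delta_0$ and $c^{\ast}(\Delta_0)<4\pi$ --- constitute essentially the entire content of \cite{AEJ17}, which is a long case analysis, not a routine check. Your statement that ``the exponent inequalities in (AEJ-E) isolate precisely those configurations where such a distribution would fail, so their exclusion permits the argument to close'' is not an argument but a restatement of the theorem: showing that the excluded cases are exactly the obstructions is the point at issue. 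As written, the proposal asserts the existence of the required enumeration and scheme without exhibiting either, so it cannot be accepted as a proof; it is an accurate road map to \cite{AEJ17}.
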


We do not expect there to be any non-aspherical presentations in the Case (AEJ-E):

\begin{conj}\label{conj:lk>0}
Let $l>0,k>0$, $l\neq k$. If (AEJ-E) holds then $\mathcal{Q}$ is diagrammatically reducible and hence aspherical.
\end{conj}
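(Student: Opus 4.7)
The plan is to prove diagrammatic reducibility of $\mathcal{Q}$ via the curvature distribution method of Section~\ref{sec:curredistrib}; asphericity then follows from Theorem~\ref{thm:DRAspherical}(b), since $\mathcal{Q}$ has a single cyclically reduced relator $r=x^lgx^kh$ that is orientable, and $r$ cannot be a proper power in $G\ast F$ because $l\neq k$ while $r$ contains only two $G$-syllables. The cyclic rotation sending $r$ to $x^khx^lg$ induces a homotopy equivalence of cellular models and, upon relabelling $(\tilde l,\tilde k,\tilde g,\tilde h)=(k,l,h,g)$, identifies case (AEJ-E)(ii) with (i) and (AEJ-E)(iv) with (iii). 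It therefore suffices to treat (i) and (iii).

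Suppose for contradiction that $\mathbb{P}$ is a reduced, connected, spherical picture over $\mathcal{Q}$. In both remaining cases $\langle g,h\rangle\leq G$ is cyclic of order $n>6$, and a direct computation gives $\mu=1/|g|+1/|h|+1/|gh^{-1}|\in\{3/n,4/n\}$, so $\mu<1/2$. I would begin by enumerating every admissible closed path of length at most six in the star graph $\mathcal{Q}^{\mathrm{st}}$ of Figure~\ref{fig:stargraphs}(a). Length-two cycles correspond to forbidden dipoles; any longer admissible cycle must carry trivial label in $\langle h\rangle$ (respectively $\langle g\rangle$), must fit the positional pattern of the $G$-syllables in $r$, and, using the inequality $l<k<2l$ in cases (i) and (iii), must in fact be built by repeatedly inserting the critical subword $gh^{-2}$ or $hg^{-2}$ (according to which case applies). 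The bound $n>6$ then forces any non-dipole admissible cycle to be of length at least a small explicit constant and produces a short finite list of admissible region labels of low degree.

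Assign the standard angle function of Section~\ref{sec:curredistrib}: corners at $2$-gonal regions receive angle $0$, all other corners of a vertex $v$ receive angle $2\pi/d(v)=2\pi/(l+k)$. Then every vertex of $\mathbb{P}$ has zero curvature, and a positive-curvature region $\Delta$ has degree $d(\Delta)\in\{3,4,5\}$ with labels drawn from the enumeration above. For each such label I would exhibit a distinguished edge of $\partial\Delta$ across which to transfer $c(\Delta)$ to a neighbour $\hat\Delta$ of large degree; the choice is guided by locating the side of $\Delta$ on which the next $G$-label is a high power of the primitive element. Using $l<k<2l$ once more I would bound, uniformly in $\hat\Delta$, the number of positive-curvature regions that can discharge into a single $\hat\Delta$ and conclude by direct computation that $c^\ast(\hat\Delta)\leq 0$ for all $\hat\Delta\neq\Delta_0$, contradicting $c(\mathbb{P})=4\pi$.

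The main obstacle I foresee is the control of \emph{chains} of consecutive positive-curvature regions joined through $2$-gonal regions. Such a chain propagates the relation $g=h^{\pm 2}$ (respectively $h=g^{\pm 2}$) around successive vertices of $\mathbb{P}$, and a priori can accumulate positive curvature faster than it can be absorbed locally. Handling these will likely require a global argument that tracks the $h$-exponent accumulated along the chain: either the chain closes into an admissible cycle whose label forces $h^m=1$ with $1\leq m<|h|$, contradicting $n>6$, or else the chain terminates at a region whose degree grows with the chain length and absorbs the accumulated curvature. If the direct distribution fails in sporadic local configurations, an alternative would be to replace the standard angle function by a locally modified one in the presence of the critical subwords, in the spirit of~\cite{ECap} and the proof of Theorem~\ref{thm:AEJ17lk>0}.
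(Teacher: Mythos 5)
This statement is Conjecture~\ref{conj:lk>0}: the paper contains no proof of it, and the case (AEJ-E) is by definition the residue left unresolved in \cite{AEJ17} after precisely the methods you describe were pushed as far as their authors could take them. So there is no proof in the paper to compare yours against; the only question is whether your outline closes the gap, and it does not. Your preliminary reductions are sound: with $l\neq k$ and only two $G$-syllables the relator is not a proper power, $\mathcal{Q}$ is orientable, and the cyclic rotation $x^lgx^kh\mapsto x^khx^lg$ with the relabelling $(\tilde l,\tilde k,\tilde g,\tilde h)=(k,l,h,g)$ does identify (AEJ-E)(ii) with (i) and (iv) with (iii), so it suffices to treat (i) and (iii), and Theorem~\ref{thm:DRAspherical}(b) would indeed convert diagrammatic reducibility into asphericity.

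The heart of the argument, however --- exhibiting an explicit distribution scheme and verifying $c^{\ast}(\hat\Delta)\leq 0$ for every region $\hat\Delta\neq\Delta_0$ --- is exactly the part you leave unexecuted, and you identify the fatal configuration yourself: chains of positive-curvature regions linked through $2$-gonal regions, sustained by the relation $g=h^{\pm 2}$ (or $h=g^{\pm 2}$) which is available by hypothesis everywhere in (AEJ-E). Your proposed remedies (``likely require a global argument,'' ``replace the standard angle function by a locally modified one'') are the standard toolbox of \cite{ECap} and \cite{AEJ17}, and it is precisely this toolbox that failed to dispose of (AEJ-E); that failure is why the case is isolated as an exception in Theorem~\ref{thm:AEJ17lk>0} and why the statement appears here as a conjecture rather than a theorem. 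Until you produce a concrete discharging rule (or weight function) and check it against all admissible configurations, including arbitrarily long chains of $2$-gons, you have a research programme, not a proof. Two smaller slips: in case (i) with $|h|=n$ even one gets $\mu=4/n$, so at $n=8$ you have $\mu=1/2$, not $\mu<1/2$ as claimed; and the assertion that every non-dipole admissible cycle is built by inserting the critical subword $gh^{-2}$ or $hg^{-2}$ is itself an unproved structural claim about the star graph that would need the same careful enumeration you defer.
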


We now turn to the cases $l>0,k=-1$, where a less complete picture is known. We start with the case $l=2,k=-1$. For this we need to introduce the following conditions:

\begin{description}
  \item[(E-E1)] ($|g|=9$, $|h|=3$, $h=g^{3}$) or ($|h|=9$, $|g|=3$, $g=h^{3}$);
  \item[(E-E2)] ($|g|=9$, $|h|=3$, $h=g^{- 3}$) or ($|h|=9$, $|g|=3$, $g=h^{- 3}$);
  \item[(E-E3)] ($|g|=8$, $|h|=4$, $h=g^2$) or ($|h|=8$, $|g|=4$, $g=h^2$).
\end{description}

\begin{thm}[{{\cite[Theorem~1.1]{ECap}}}]\label{thm:l=2k=-1}
Suppose $l=2,k=-1$ and that none of the conditions (\ref{eq:ListOfCases}) hold and that none of (E-E1),(E-E2),(E-E3) hold. Then $\mathcal{Q}$ is diagrammatically reducible if and only if none of the following holds:
\begin{itemize}
  \item[(i)] $|g|<\infty$ and either $g=h^{-2}$ or $h=g^{-2}$;
  \item[(ii)]  $[g,h]=1$ and either $|g|=2$ or $|h|=2$;
  \item[(iii)] $\{|g|,|h|\}=\{2,3\}$ and $(gh)^2(g^{-1}h^{-1})^2=1$;
  \item[(iv)] $|g|=|h|=3$ and $[g,h]=1$;
  \item[(v)] $|g|=|h|=7$ and either $g=h^2$ or $h=g^2$;
  \item[(vi)] $|g|=|h|=9$ and either $g=h^2$ or $h=g^2$.
\end{itemize}
\end{thm}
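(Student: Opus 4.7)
The plan is to prove the two implications by rather different means: necessity by explicit construction of spherical pictures and sufficiency by the curvature distribution method of Section~\ref{sec:curredistrib}. The relator is $r = x^{2}gx^{-1}h$ and, assuming the hypotheses exclude~(\ref{eq:ListOfCases}), (E-E1), (E-E2), (E-E3), the relative presentation $\mathcal{Q}$ is orientable with no proper power relator, so by Theorem~\ref{thm:DRAspherical}(b) diagrammatic reducibility is equivalent to asphericity, which we shall invoke only implicitly.

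For the forward direction, I would argue by contrapositive: assume that one of (i)--(vi) holds and exhibit a non-trivial reduced connected spherical picture over $\mathcal{Q}$. In each case the hypothesis on $\gpres{g,h}$ produces a short admissible cycle in the star graph $\mathcal{Q}^{\mathrm{st}}$ and one builds a symmetric picture supported on a small triangulation of $S^{2}$: case (i) supports a cube or octahedron with antipodal identifications using the relation $g=h^{-2}$; (ii) and (iv) give tetrahedral/octahedral pictures from the abelian relation $[g,h]=1$; (iii) gives a dodecahedral picture exploiting the relation $(gh)^{2}(g^{-1}h^{-1})^{2}=1$ in $S_{3}\oplus\mathbb{Z}_{3}$; and (v),(vi) use the icosahedral/dodecahedral symmetry arising from $g=h^{\pm 2}$ of order $7$ or $9$. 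For each picture one must verify reducedness, i.e.\ that no arc supports a dipole in the sense of Definition~\ref{Definition:Aspherical}; this reduces to checking that the corner labels on adjacent discs do not pair up as $Sg$ and $S^{-1}g^{-1}$, which is guaranteed by the chosen sign pattern and the orientability hypothesis.

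For the converse I would suppose, for contradiction, that none of (i)--(vi) holds and that there exists a non-trivial reduced connected spherical picture $\mathbb{P}$ over $\mathcal{Q}$, amend $\mathbb{P}$ as in Section~\ref{sec:curredistrib} so that its regions tessellate $S^{2}$, and apply the standard angle function $2\pi/d(v)$ at each non-bigon corner. Using the description of $\mathcal{Q}^{\mathrm{st}}$ given in Figure~\ref{fig:stargraphs}(b) (with $l=2$, $k=-1$), every inner region $\Delta$ has label corresponding to an admissible closed cycle in $\mathcal{Q}^{\mathrm{st}}$; the hypothesis that none of (\ref{eq:ListOfCases}), (E-E1)--(E-E3), or (i)--(vi) holds is tailored precisely to forbid admissible cycles of length at most four other than the finitely many residual configurations that must be handled by hand. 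I would first enumerate which region labels $g^{a_{1}}h^{b_{1}}\cdots$ can occur up to degree six, then identify the finite list of region types $\Delta$ with $c(\Delta)>0$: given the standard angles, these force $d(\Delta)\in\{3,4,5\}$ and adjacent vertex degrees bounded above by specific integers depending on the tuple $(|g|,|h|,|gh^{-1}|)$.

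The main obstacle, and the heart of the proof, is the design of a curvature distribution scheme sending the positive curvature of each such $\Delta$ to nearby regions of sufficiently negative curvature. One assigns portions of $c(\Delta)$ across arcs according to the local label configuration, defines $c^{\ast}$ as in Section~\ref{sec:curredistrib}, and verifies that $c^{\ast}(\hat{\Delta})\leq 0$ for every non-distinguished region and $c^{\ast}(\Delta_{0})<4\pi$, contradicting $c(\mathbb{P})=4\pi$. The exceptional cases (E-E1), (E-E2), (E-E3) emerge precisely as those configurations where the distribution scheme is on the verge of breaking down and the analysis genuinely cannot be pushed through with the present method; likewise, the six items (i)--(vi) correspond to the configurations where the positive curvature literally cannot be absorbed, matching the spherical pictures constructed in the forward direction. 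The bulk of the technical work lies in the careful bookkeeping of these region-by-region transfers and in verifying, case by case on $(|g|,|h|,|gh^{-1}|)$, that the scheme leaves no residual positive curvature.
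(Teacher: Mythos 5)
This theorem is stated in the survey as a quotation of \cite[Theorem~1.1]{ECap}; the paper gives no proof of its own, so there is nothing internal to compare your argument against line by line. That said, your proposed strategy is essentially the one Edjvet actually uses: the ``only if'' direction is established by exhibiting explicit reduced connected spherical pictures in each of the cases (i)--(vi) (these are the pictures referred to elsewhere in the survey, e.g.\ \cite[Figure~4.1(h),(i)]{ECap}), and the ``if'' direction is a curvature-distribution argument of exactly the kind described in Section~\ref{sec:curredistrib} --- which the survey notes was first introduced in this context in \cite{ECap}. Your identification of the exceptional families (E-E1)--(E-E3) as the configurations where the distribution scheme cannot (yet) be pushed through is also accurate; those cases were left unresolved in \cite{ECap} precisely for that reason. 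So at the level of strategy your outline is sound, with the understanding that the entire mathematical content lies in the case-by-case construction of the pictures and of the distribution scheme, which your proposal does not attempt.

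One assertion in your first paragraph is wrong and worth correcting: Theorem~\ref{thm:DRAspherical}(b) gives only the implication that diagrammatic reducibility implies asphericity (for orientable presentations without proper power relators); the converse is false, and Example~\ref{ex:asphnotDR}(a) exhibits a presentation of exactly the form $\pres{G,x}{x^2gx^{-1}h}$ that is aspherical but not diagrammatically reducible. The theorem you are proving classifies diagrammatic reducibility only; deducing the corresponding asphericity classification (Corollary~\ref{cor:l=2k=-1}) requires separate arguments in each of the cases (i)--(vi) --- finite quotients of order greater than $|G|$, elements of prohibited finite order, or the three-manifold criterion --- and cannot be obtained from a purported equivalence of the two notions. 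Since you say you invoke the equivalence ``only implicitly'' and the rest of your argument works directly with pictures and dipoles, this does not invalidate the proposal, but the claim as stated should be deleted. (A second, minor, slip: dipoles are defined in Section~\ref{section:Methods}, not in Definition~\ref{Definition:Aspherical}.)
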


The following corollary shows that the non-diagrammatically reducible presentations identified in Theorem~\ref{thm:l=2k=-1}, and also those in Case~(E-E3), are non-aspherical.

\begin{co}\label{cor:l=2k=-1}
Suppose $l=2,k=-1$ and that neither of (E-E1) nor (E-E2) holds.  Then $\mathcal{Q}$ is aspherical if and only if none of the conditions (\ref{eq:ListOfCases}) hold, none of Cases~(i)-(vi) of Theorem~\ref{thm:l=2k=-1} hold, and (E-E3) does not hold.
\end{co}

\begin{proof}
It suffices to show that $\mathcal{Q}$ is non-aspherical in Cases (i)--(vi) and (E-E3). By Lemma~\ref{Lemma:Coefficient} we may assume that $G=\gpres{g,h}$.

In Case~(i) we may assume that $h=g^{-2}$ and $|h|=n$. If $n\leq 5$ or $n=7$ then $G(\mathcal{Q})$ is finite of order greater than $n$, so $\mathcal{Q}$ is non-aspherical by Theorem~\ref{Theorem:Theory}(c); if $n\geq 6$ is even then $G(\mathcal{Q})$ is non-aspherical by Example~\ref{ex:3mfdgps}(a), and if $n\geq 9$ is odd then $G(\mathcal{Q})$ is non-aspherical by Example~\ref{ex:infinitewithtorsion}(a).

In Cases~(iii),(iv),(E-E3) the presentation $\mathcal{Q}$ is non-aspherical by Example~\ref{ex:333}, and in Cases~(v),(vi) it is non-aspherical by Example~\ref{ex:infinitewithtorsion}. In Case~(ii), taking $h^2=1$, then (as in~\cite{ECap}) the relation $(xgh)^2=g^{-1}(xgh)g$ can be shown to hold in $G(\mathcal{Q})$, from which it follows that $xgh$ is an element of order $2^{|g|}-1$ in $G(\mathcal{Q})$, and so $\mathcal{Q}$ is non-aspherical by Theorem~\ref{Theorem:Theory}(c).
\end{proof}

We now turn to the case $l=3,k=-1$. Theorems~1.1 and~1.2 of~\cite{AAE14} consider asphericity of one-relator relative presentations $\pres{G,x}{xg_1xg_2xg_3x^{-1}g_4}$ (see also~\cite{Ahmad}). By putting $g_1=g_2=1$, $g=g_3$, $h=g_4$, this becomes the presentation $\mathcal{Q}$ with $\{l,k\}=\{3,-1\}$. With these restrictions, the exceptional Cases (E) and (E4) of~\cite{AAE14} paper become:
\begin{description}
  \item[(AAE-E)] $\gpres{g,h}\cong\Z_2\oplus \Z_4$;
  \item[(AAE-E4)] ($|h|=8$, $g=h^4$) or ($|g|=8$, $h=g^4$).
\end{description}

As a corollary of~\cite[Theorems~1.1,1.2]{AAE14} we then have:

\begin{thm}[{{\cite[Theorems~1.1,1.2]{AAE14}}}]\label{thm:l=3k=-1}
Suppose $l=3,k=-1$ and that none of the conditions (\ref{eq:ListOfCases}) hold. Suppose further that (AAE-E) and (AAE-E4) do not hold. Then $\mathcal{Q}$ is diagrammatically reducible and hence aspherical.
\end{thm}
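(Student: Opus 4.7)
The plan is to derive Theorem~\ref{thm:l=3k=-1} as a direct specialization of \cite[Theorems~1.1 and~1.2]{AAE14}, which classify diagrammatic reducibility for the broader family of one-relator relative presentations $\mathcal{R} = \pres{G,x}{xg_1 xg_2 xg_3 x^{-1}g_4}$. Setting $g_1 = g_2 = 1$, $g_3 = g$, and $g_4 = h$, the relator becomes $x \cdot 1 \cdot x \cdot 1 \cdot x g x^{-1}h = x^3 g x^{-1} h$, so $\mathcal{R}$ coincides with $\mathcal{Q}$ in the case $\{l,k\}=\{3,-1\}$. Thus $\mathcal{Q}$ is diagrammatically reducible provided that none of the exceptional conditions enumerated in \cite[Theorems~1.1,\,1.2]{AAE14} are met after specializing $g_1 = g_2 = 1$.

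The main step is then a careful case-by-case audit of the exceptional cases of \cite[Theorems~1.1,\,1.2]{AAE14}, checking that after imposing $g_1 = g_2 = 1$ each surviving exception either collapses to one of the configurations listed at (\ref{eq:ListOfCases}) or to the conditions (AAE-E) or (AAE-E4). Many exceptions in \cite{AAE14} involve nontrivial constraints on $g_1, g_2$ and therefore become vacuous when $g_1 = g_2 = 1$; others force relations on $g_3, g_4$ that, under our substitution, translate directly into one of $g = h$, $g = h^{-1}$, $g = h^{\pm 2}$, $h = g^{\pm 2}$, $g = h^{\pm 3}$, $h = g^{\pm 3}$ with appropriate orders, i.e.\ precisely the Cases ($Z$), ($M$), ($J_4$), ($J_6$), ($K_5$), ($K_6^{\pm}$), ($L_6$), or the Platonic Case ($P$). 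The remaining residual exceptions are exactly (AAE-E), where $\sgp{g,h} \cong \Z_2 \oplus \Z_4$, and (AAE-E4), where one of $g,h$ is a fourth power of the other and has order $8$. This bookkeeping is the real work of the proof; the hypothesis of the theorem was crafted to subsume all such residuals.

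With diagrammatic reducibility in hand, asphericity follows from Theorem~\ref{thm:DRAspherical}(b) once we confirm that $\mathcal{Q}$ is orientable with no proper power relator. Orientability and non-proper-powerness hold because $l = 3$ and $k = -1$ satisfy $l \neq \pm k$ (so the relator cannot be a cyclic permutation of its inverse, and by the exponent vector $(3,-1)$ it cannot be a proper power in $G \ast F$); furthermore Case ($Z$) being excluded ensures $g \neq h$ (else the relator is not a proper power only trivially, but in any event the standard orientability check in Definition~\ref{def:orientable} goes through as in the argument following Lemma~\ref{lemma:relatorconditions}). Applying Theorem~\ref{thm:DRAspherical}(b) then yields asphericity.

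The anticipated obstacle is purely bookkeeping: the exception lists in \cite[Theorems~1.1,\,1.2]{AAE14} are intricate, and one must verify that \emph{every} exception there, after the substitution $g_1 = g_2 = 1$, reduces to a configuration that is already excluded by the hypothesis of Theorem~\ref{thm:l=3k=-1}. There are no new mathematical ideas required beyond those already present in \cite{AAE14}; the content of the theorem is the verification that the hypothesis list ((\ref{eq:ListOfCases}), (AAE-E), (AAE-E4)) is precisely the specialization of the exception list from \cite{AAE14}.
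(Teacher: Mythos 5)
Your proposal is correct and follows the same route as the paper: the theorem is stated there precisely as a corollary of \cite[Theorems~1.1,\,1.2]{AAE14} obtained by setting $g_1=g_2=1$, $g_3=g$, $g_4=h$, with the exceptional cases (E), (E4) of that paper specializing to (AAE-E), (AAE-E4) and the rest absorbed into the list (\ref{eq:ListOfCases}); the passage from diagrammatic reducibility to asphericity is exactly the paper's standing observation that $l\neq\pm k$ makes $\mathcal{Q}$ orientable with a non-proper-power relator, so Theorem~\ref{thm:DRAspherical}(b) applies. (Your aside invoking the exclusion of Case~($Z$) for orientability is unnecessary --- $l\neq\pm k$ alone suffices, as the paper notes --- but this does not affect the argument.)
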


We now turn to the case $\{l,k\}=\{n,-1\}$ where $n\geq 4$. This is considered in~\cite{Davidson09} under the hypothesis that $g^2\neq 1,h^2\neq 1$ and the exclusion of three exceptional families:

\begin{description}
\item[(D-E1)] ($g=h^2$, $3<|h|<\infty$) or ($h=g^2$, $3<|g|<\infty$);
\item[(D-E2)] ($g=h^{-2}$, $3<|h|<\infty$) or ($h=g^{-2}$, $3<|g|<\infty$);
\item[(D-E4)] ($g=h^3$, $|h|=9$) or ($h=g^3$, $|g|=9$).
\end{description}

\begin{thm}[{\cite{Davidson09}}]\label{thm:l>3k=-1}
Suppose $l\geq 4$, $k=-1$, that $g^2\neq 1, h^2\neq 1$ in $G$, and that none of ($Z$), (${M}$), ($P$) hold. Suppose further that none of (D-E1),(D-E2),(D-E4) hold. Then $\mathcal{Q}$ is diagrammatically reducible and hence aspherical.
\end{thm}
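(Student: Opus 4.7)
The plan is to apply the curvature distribution method (Section~\ref{sec:curredistrib}) to $\mathcal{Q} = \pres{G,x}{x^l g x^{-1} h}$. Since $l \geq 4$ we have $l \neq \pm k$, so (as noted just before Theorem~\ref{thm:lk=21}) the relative presentation $\mathcal{Q}$ is orientable with non-proper-power relator; hence by Theorem~\ref{thm:DRAspherical}(b) it suffices to prove diagrammatic reducibility. Suppose for a contradiction that $\mathbb{P}$ is a non-empty reduced, connected, spherical picture over $\mathcal{Q}$.

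The star graph $\mathcal{Q}^{\mathrm{st}}$ has the shape of Figure~\ref{fig:stargraphs}(b) specialized to $|k| - 1 = 0$: two vertices $x, \bar x$ joined by $l - 1 \geq 3$ parallel edges labelled $1$, together with a loop at $\bar x$ labelled $h$ and a loop at $x$ labelled $g$. Every inner region of $\mathbb{P}$ supports an admissible cycle, whose label (a word in $g^{\pm 1}$ and $h^{\pm 1}$) must evaluate to the identity in $G$. Assign the standard angle function: angle $0$ at corners in bigonal regions and angle $2\pi/d(v)$ at the remaining corners of each vertex $v$. Since $c(3,3,3,3,3,3) = 0$, any region $\Delta$ with $c(\Delta) > 0$ has degree at most $5$, and its label is one of a short list of admissible cycles of length at most $5$ in $\mathcal{Q}^{\mathrm{st}}$.

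Now use the hypotheses to prune this list. The assumptions $g^2 \neq 1$, $h^2 \neq 1$ and the exclusion of ($Z$), ($M$), ($P$) rule out admissible cycles whose labels assert $g^{\pm 2} = 1$, $h^{\pm 2} = 1$, $g = h^{\pm 1}$, or arise from a spherical triangle group quotient of $\gpres{g, h}$. The exclusion of (D-E1), (D-E2), (D-E4) removes the residual families where $g = h^{\pm 2}$ with $3 < |h| < \infty$ or $g = h^3$ with $|h| = 9$ (and their symmetric analogues). For each surviving configuration around a positive-curvature region $\Delta$, trace the local structure in $\mathbb{P}$ to identify a distinguished set of neighbouring regions and distribute $c(\Delta)$ along a fixed scheme. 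Verify that each recipient $\hat\Delta \neq \Delta_0$ satisfies $c^*(\hat\Delta) \leq 0$ and that $c^*(\Delta_0) < 4\pi$, giving $c(\mathbb{P}) \leq \sum c^* < 4\pi$, contradicting $c(\mathbb{P}) = 4\pi$.

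The main obstacle is the design of this distribution scheme. The hypotheses together with $l \geq 4$ are tight: each excluded family (D-E1), (D-E2), (D-E4) contributes short admissible cycles whose corresponding positive-curvature regions cluster in configurations that no uniform distribution built from the standard angle function can absorb. Outside these families, the abundance of $1$-labelled edges in $\mathcal{Q}^{\mathrm{st}}$ and the scarcity of short admissible cycles cooperate to make the scheme work, but the bulk of the proof is the careful enumeration of the remaining configurations and the case-by-case verification that the chosen distribution leaves every inner region with non-positive adjusted curvature.
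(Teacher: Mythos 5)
Your proposal correctly identifies the method that underlies this result --- a curvature distribution argument on a putative non-empty reduced connected spherical picture over $\mathcal{Q}$, organised around the star graph of Figure~\ref{fig:stargraphs}(b) with $|k|-1=0$ --- and this is indeed the approach of the cited source \cite{Davidson09}; note that the present paper does not reprove the theorem but simply quotes it. Your preliminary reductions are also sound: $l\geq 4$ and $k=-1$ give $l\neq\pm k$, so $\mathcal{Q}$ is orientable with non-proper-power relator and Theorem~\ref{thm:DRAspherical}(b) reduces asphericity to diagrammatic reducibility, and your description of $\mathcal{Q}^{\mathrm{st}}$ ($l-1$ parallel edges labelled $1$ plus one loop at each vertex) is correct.

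However, what you have written is a strategy, not a proof, and the gap is precisely where the entire mathematical content of the theorem lives. The steps ``prune this list,'' ``trace the local structure \dots and distribute $c(\Delta)$ along a fixed scheme,'' and ``verify that each recipient satisfies $c^*(\hat\Delta)\leq 0$'' are the proof; nothing in your write-up exhibits the list of positive-curvature regions that actually survive the hypotheses, specifies a distribution scheme, or carries out a single verification. In particular, the central claim --- that the exclusion of ($Z$), (${M}$), ($P$), $g^2\neq 1$, $h^2\neq 1$, (D-E1), (D-E2), (D-E4) is exactly what is needed for some scheme built on the standard angle function to close --- is asserted rather than demonstrated, and it is not a priori true that a single uniform scheme works: the source handles the problem through an extended case division depending on the orders of $g$, $h$ and $gh^{-1}$ and on which short admissible cycles exist, with different angle assignments and distribution rules in different cases. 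Your final paragraph, explaining why the excluded families \emph{should} be obstructions, is a plausibility argument about the necessity of the hypotheses, not a step in establishing their sufficiency. As it stands the proposal establishes nothing beyond the (correct) framing of the problem.
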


Note that the hypotheses of Theorem~\ref{thm:l>3k=-1} imply that none of the cases given in (\ref{eq:ListOfCases}) hold. The more general case $l>0,k<0$ was considered in~\cite{P95} under stronger hypotheses on relations involving the elements $g,h$ of $G$. The following theorem, as stated in~\cite{P95}, includes the hypothesis that the natural homomorphism $G\ra G(\mathcal{Q})$ is injective. That hypothesis is redundant however, as (under the remaining hypotheses) injectivity follows from~\cite{Edj91b}.

\begin{thm}[{{\cite[Theorem~C(1)]{P95}}}]\label{thm:Prishchepov}
Suppose that $l>0,k<0$ and $l>2|k|$ or $|k|>2l$, that Cases~($Z$),(${M}$) do not hold, and that one of the following holds:
\begin{itemize}
  \item[(i)] $|g|\geq 6$, $|h|\geq 3$, $h\neq g^{\pm 2}$, $h\neq g^{-3}$, $g\neq h^{-2}$; or
  \item[(ii)] $|g|\geq 3$, $|h|\geq 6$, $g\neq h^{\pm 2}$, $g\neq h^{-3}$, $h\neq g^{-2}$; or
  \item[(iii)] $|g|\geq 4$, $|h|\geq 4$, $g\neq h^{-2}$, $h\neq g^{-2}$.
\end{itemize}
Then $\mathcal{Q}$ is aspherical.
\end{thm}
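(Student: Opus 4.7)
The plan is to apply the weight test (Theorem~\ref{Theorem:WeightTest}) to exhibit diagrammatic reducibility of $\mathcal{Q} = \pres{G,x}{x^lgx^kh}$ and then invoke Theorem~\ref{thm:DRAspherical}(b) to upgrade this to asphericity. First I would verify the two structural prerequisites. Because $l>2|k|$ or $|k|>2l$ with $l>0>k$, the $x$-exponent sum $l+k$ is nonzero, so $r=x^lgx^kh$ is not conjugate in $G\ast F$ to an element of $G$ and, since conjugation preserves $x$-exponent sum, $r$ is not conjugate to $r^{-1}$ either; combined with the standing hypothesis $g,h\neq 1$ this gives orientability in the sense of Definition~\ref{def:orientable}. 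The two-syllable exponent pattern $(l,k)$ with $l\neq k$ forbids any nontrivial decomposition $r=\mathring{r}^e$, $e\geq 2$. Thus $\mathcal{Q}$ is orientable with a non-proper-power relator, so diagrammatic reducibility will suffice.

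Next I would examine the star graph $\mathcal{Q}^{\mathrm{st}}$, which as in Figure~\ref{fig:stargraphs}(b) consists of two vertices $x,\bar x$ connected by an $(l-1)$-fold bundle of $1$-labelled edges in one direction and a $(|k|-1)$-fold bundle in the other, together with one distinguished loop at $x$ labelled by a power of $g$ and one at $\bar x$ labelled by a power of $h$. I would propose a weight function that assigns a small value $\alpha$ to each $1$-labelled arc in the dominant bundle (the $l-1$ side when $l>2|k|$, symmetrically otherwise), a larger weight $\beta$ to arcs in the minority bundle, and weights $\gamma,\delta$ on the two distinguished loops chosen as explicit rational functions of $|g|,|h|,|gh^{-1}|$. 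Condition~(I) of the weight test amounts to a single linear inequality in $\alpha,\beta,\gamma,\delta$, and the asymmetry hypothesis $l>2|k|$ (or $|k|>2l$) produces enough $1$-labelled corners at the dominant vertex to give this inequality substantial slack.

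The real work lies in condition~(II): every cyclically reduced admissible closed cycle in $\mathcal{Q}^{\mathrm{st}}$ must carry weight $\geq 2$. Admissible cycles correspond to relations in $G$ among the edge labels, so I would stratify them into three families: cycles that stay at $x$ and traverse only the $g$-loop (encoding relations in $\sgp{g}$), cycles that stay at $\bar x$ and traverse only the $h$-loop (relations in $\sgp{h}$), and cycles that cross through the $1$-labelled bundles and so spell relations involving both $g$ and $h$ such as $g^ah^b=1$, $g^ahg^bh^{-1}=1$, and so on. The order bounds $|g|\geq 6$ or $|h|\geq 6$ (or $|g|,|h|\geq 4$) guarantee that the single-vertex loops must be long enough to accumulate weight $\geq 2$, while the forbidden equations $h=g^{\pm 2}$, $g=h^{\pm 2}$, $g=h^{-3}$, $h=g^{-3}$ in hypotheses~(i)--(iii) are precisely those that would produce a short mixed cycle of weight $<2$; each remaining mixed cycle traverses enough $1$-labelled arcs for its weight to pass the threshold.

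The main obstacle is exactly this last step: a faithful enumeration of short cyclically reduced admissible cycles in $\mathcal{Q}^{\mathrm{st}}$, with a case-by-case check that each exception is captured by the disallowed relations in (i)--(iii). Once that enumeration is completed and both weight-test conditions are verified, Theorem~\ref{Theorem:WeightTest} yields diagrammatic reducibility and Theorem~\ref{thm:DRAspherical}(b) delivers asphericity of $\mathcal{Q}$; this also recovers the injectivity of $G\to G(\mathcal{Q})$ (so Prishchepov's extra injectivity hypothesis is indeed redundant, as noted before the statement).
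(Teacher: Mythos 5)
Your overall strategy cannot succeed as described. Note first that the paper does not prove Theorem~\ref{thm:Prishchepov} at all: it is quoted from Prishchepov \cite{P95}, whose argument uses Ol'shanskii's machinery of graded presentations and graded diagrams rather than the weight test; the survey's only additions are the observation that Prishchepov's extra hypothesis that $G\ra G(\mathcal{Q})$ be injective is redundant by \cite{Edj91b}, and that the conclusion may be read in the sense of Definition~\ref{Definition:Aspherical}.

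The concrete gap is in your treatment of condition~(II), and it is not merely ``the real work'' --- it is an impossibility. In the star graph of Figure~\ref{fig:stargraphs}(b) there are $m=l+|k|-2$ edges labelled $1$ joining $x$ to $\bar x$, and any two distinct such edges bound a cyclically reduced closed cycle of length two with trivial label, hence an admissible cycle. Condition~(II) therefore forces $\theta(e_i)+\theta(e_j)\geq 2$ for every such pair; ordering the weights, this yields $\sum_j \theta(e_j)\geq m$ whenever $m\geq 2$, and $m\geq 2$ always holds here because $l>2|k|$ or $|k|>2l$ forces $l+|k|\geq 4$. Condition~(I) reads $\sum_j\theta(e_j)+\theta_g+\theta_h\leq m$, so one is forced to take $\theta_g+\theta_h\leq 0$; but when $|g|$ and $|h|$ are both finite (which hypotheses (i)--(iii) permit, e.g.\ $|g|=6$, $|h|=3$) the admissible cycles obtained by traversing the $g$-loop $|g|$ times and the $h$-loop $|h|$ times force $\theta_g\geq 2/|g|>0$ and $\theta_h\geq 2/|h|>0$, a contradiction. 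Thus no (weakly) aspherical weight function exists, and your proposed assignment fails at once: two arcs of the ``dominant bundle'' carrying small weight $\alpha$ already form an admissible $2$-cycle of weight $2\alpha<2$. (Your claim that the asymmetry $l>2|k|$ gives condition~(I) ``substantial slack'' is backwards: every additional $1$-labelled edge must carry weight essentially $1$, consuming exactly the slack it creates.) This is precisely why the literature on length-four relators, and Section~\ref{sec:curredistrib} of this paper, abandons the uniform weight test in favour of curvature distribution, where degree-two regions can be assigned angle $0$ and positive curvature is redistributed according to local configurations; a correct combinatorial-geometric proof must take that route (or Ol'shanskii's graded route, as Prishchepov does).
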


We close with the following asphericity result, which was used to complete the classification of the finite Fibonacci groups~$F(r,n)$. Its proof highlights the intricacy of the arguments that can be required when proving asphericity of presentations $\mathcal{Q}$ when~(\ref{eq:shortcycles}) holds.

\begin{thm}[{{\cite[Theorem~1.2]{EdjJuh14}}}]\label{thm:K_5(l,-1)}
Let $l\geq 7$, $k=-1$ and suppose that ($K_5$) holds. Then $\mathcal{Q}$ is diagrammatically reducible and hence aspherical.
\end{thm}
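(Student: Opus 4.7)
The plan is to establish the stronger conclusion that $\mathcal{Q}=\pres{G,x}{x^l g x^{-1} h}$ is diagrammatically reducible; asphericity then follows from Theorem~\ref{thm:DRAspherical}(b), since $\mathcal{Q}$ is orientable with a cyclically reduced relator that is not a proper power in $G\ast F$ (both are straightforward checks under $l\geq 7$ and~($K_5$)). By Lemma~\ref{Lemma:Coefficient} I would reduce to $G=\gpres{g,h}\cong\Z_5$; after relabelling, assume $h=g^2$ with $|g|=5$, so $|g|=|h|=|gh^{-1}|=5$. Suppose for contradiction that there is a non-trivial reduced connected spherical picture $\mathbb{P}$ over $\mathcal{Q}$; the aim is to contradict Euler's formula $c(\mathbb{P})=4\pi$ via the curvature distribution method of Section~\ref{sec:curredistrib}.

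First I would read the star graph $\mathcal{Q}^{\mathrm{st}}$ off Figure~\ref{fig:stargraphs}(b) (with $k=-1$): vertices $x,\bar x$, a self-loop at $x$ labelled $h^{-1}=g^{-2}$, a self-loop at $\bar x$ labelled $g^{-1}$ (both of order $5$), and $l-1\geq 6$ parallel edges labelled $1$ joining $x$ and $\bar x$. Enumerating admissible closed cycles classifies the possible labels of inner regions of $\mathbb{P}$. A key observation is that for any two distinct $1$-edges $e_1,e_2$ the cycle $e_1\bar e_2$ is cyclically reduced and admissible of length $2$, corresponding to digon regions of $\mathbb{P}$; orientability ensures such digons (necessarily between like-sign vertices) are not dipoles, so reducedness does not exclude them. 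This precludes a direct application of Theorem~\ref{Theorem:WeightTest}: condition~(II) applied to these length-$2$ cycles would force weight at least $1$ on every $1$-edge, but condition~(I) would then force the two self-loops to carry non-positive total weight -- incompatible with admissibility of the cycles $s_x^5$ and $s_{\bar x}^5$, which each demand weight at least $2/5$ on the corresponding self-loop.

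I would therefore turn to curvature distribution. Starting from the standard angle function that assigns angle $0$ to each digon corner and distributes the remaining $2\pi$ uniformly among the non-digon corners at each vertex, I would locate all non-distinguished regions $\Delta$ with $c(\Delta)>0$. The candidate list is constrained by the short admissible cycles (length-$2$ digons together with cycles of length at most $5$ using the self-loops) and further by the relator structure: only two of the $l+1$ corners at any vertex carry non-trivial $G$-labels, so the sequence of $G$-labels around a region tightly restricts its admissible neighbours. For each positive-curvature $\Delta\neq\Delta_0$, I would identify a set of near regions $\hat\Delta$ to which $c(\Delta)$ can be transferred and verify that after the distribution $c^*(\hat\Delta)\leq 0$ for every $\hat\Delta\neq\Delta_0$ and $c^*(\hat\Delta_0)<4\pi$, yielding the contradiction $4\pi=c(\mathbb{P})\leq\sum c^*(\hat\Delta)<4\pi$.

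The chief obstacle is the extensive case analysis required to design a distribution scheme that works uniformly across every local configuration: ($K_5$) produces many short admissible cycles, hence many local arrangements in which positive curvature could accumulate, and none can be excluded purely locally. The hypothesis $l\geq 7$ is essential because the boundary length $l+1$ at each vertex must be large enough to absorb excess curvature through the abundant $1$-edges without overloading any single neighbour; this is precisely why ($K_5$) falls within the exceptional case (D-E1) excluded from Theorem~\ref{thm:l>3k=-1}, since for smaller $l$ non-aspherical configurations can arise. Carrying out the local-to-global curvature accounting under $l\geq 7$ is the substantive content of \cite[Theorem~1.2]{EdjJuh14}.
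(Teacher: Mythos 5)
This theorem is not proved in the paper at all: it is quoted from \cite[Theorem~1.2]{EdjJuh14}, and the surrounding text explicitly flags that its proof ``highlights the intricacy of the arguments'' needed when the short-cycle condition~(\ref{eq:shortcycles}) holds. So there is no in-paper argument to measure you against; the only question is whether your proposal constitutes a proof on its own. It does not. Your setup is accurate and genuinely useful as far as it goes: the reduction via Lemma~\ref{Lemma:Coefficient} to $G\cong\Z_5$, the description of the star graph (two order-$5$ loops labelled $g^{-1}$ and $h^{-1}=g^{-2}$ plus $l-1$ parallel $1$-edges), and in particular your argument that no aspherical weight function can exist --- condition (II) on the length-two cycles through pairs of $1$-edges forces total weight at least $l-1$ on those corners, leaving non-positive weight for the two loops, contradicting (II) applied to the admissible cycles $s_x^5$ and $s_{\bar x}^5$ --- are all correct and correctly explain why a curvature-distribution argument is unavoidable. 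But the proposal then stops exactly where the proof begins. You never specify the distribution scheme: which regions can be positively curved, where each one sends its curvature, and why $c^*(\hat\Delta)\leq 0$ for $\hat\Delta\neq\Delta_0$ and $c^*(\Delta_0)<4\pi$. You acknowledge this yourself by deferring ``the local-to-global curvature accounting'' to \cite{EdjJuh14}. That accounting is not a routine verification; it is the entire content of the theorem (the published proof runs to dozens of pages of configuration analysis), so what you have written is a correct plan of attack, not a proof.

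Two smaller points. First, ``after relabelling, assume $h=g^2$'' needs justification: $g$ and $h$ occupy asymmetric positions in the relator $x^lgx^{-1}h$, and the two branches of ($K_5$) correspond to $h=g^2$ versus $h=g^3$ in $\Z_5$; these are not obviously interchanged by any symmetry of $\mathcal{Q}$, so either both must be treated or an explicit equivalence exhibited. Second, your claim that $l\geq 7$ is ``essential'' is plausible but unsubstantiated as stated; what is true from the survey is that for $l=2,3$, $k=-1$ the ($K_5$) groups are finite of order $>5$ (Table~\ref{tab:Ocaseslk>0}), hence non-aspherical by Theorem~\ref{Theorem:Theory}(c), while the intermediate values $l=4,5,6$ are not resolved by anything you cite. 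Neither point is fatal to the strategy, but both would need to be addressed in a complete argument.
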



\begin{thebibliography}{10}

\bibitem{AAE14}
Abd Ghafur~Bin {Ahmad}, Muna~A. {Al-Mulla}, and Martin {Edjvet}.
\newblock Asphericity of length four relative group presentations.
\newblock {\em Journal of Algebra and its Applications}, 16(4):1750076, 2017.
\newblock (27 pages).

\bibitem{Ahmad}
Abdul Ghafur~Bin Ahmad.
\newblock {\em The application of pictures to decision problems and relative
  presentations}.
\newblock PhD thesis, University of Glasgow, 1995.

\bibitem{AE14}
Suzana {Aldwaik} and Martin {Edjvet}.
\newblock On the asphericity of a family of positive relative group
  presentations.
\newblock {\em Proc. Edinb. Math. Soc.}, 60:545--564, 2017.

\bibitem{AEJ17}
Suzana {Aldwaik}, Martin {Edjvet}, and Arye {Juhasz}.
\newblock Asphericity of positive free product length 4 relative group
  presentations.
\newblock {\em Forum Mathematicum, to appear}, 2018.

\bibitem{Anshel91}
Iris~Lee Anshel.
\newblock On two relator groups.
\newblock In Paul Latiolais, editor, {\em Topology and Combinatorial Group
  Theory, Proceedings of the Fall Foliage Topology Seminars held in New
  Hampshire 1985-1988}, volume 1440 of {\em Lecture Notes in Mathematics},
  pages 1--21. Springer-Verlag Berlin Heidelberg, 1990.

\bibitem{BBP}
Y.G. {Baik}, William~A. {Bogley}, and Stephen~J. {Pride}.
\newblock {On the asphericity of length four relative group presentations.}
\newblock {\em {Int. J. Algebra Comput.}}, 7(3):277--312, 1997.

\bibitem{BV}
V.G. {Bardakov} and A.Yu. {Vesnin}.
\newblock {A generalization of Fibonacci groups.}
\newblock {\em {Algebra Logika}}, 42(2):131--160, 2003.
\newblock {T}ranslation in {A}lgebra {L}ogic 42(2):73--91 (2003).

\bibitem{SmallGroupsLibrary}
A.~U. Besche, B.~Eick, and E.A. O'Brien.
\newblock The {SmallGroups Library} -- a {GAP} package, 2002.

\bibitem{Bogley91}
William~A. {Bogley}.
\newblock {An identity theorem for multi-relator groups.}
\newblock {\em {Math. Proc. Camb. Philos. Soc.}}, 109(2):313--321, 1991.

\bibitem{BShift}
William~A. {Bogley}.
\newblock {On shift dynamics for cyclically presented groups.}
\newblock {\em {J. Algebra}}, 418:154--173, 2014.

\bibitem{BogleyParker}
William~A. Bogley and Forrest~W. Parker.
\newblock Cyclically presented groups with length four positive relators.
\newblock {\em J. Group Theory}, 21:911--948, 2016.

\bibitem{BP}
William~A. Bogley and Stephen~J. Pride.
\newblock Aspherical relative presentations.
\newblock {\em Proc. Edin. Math. Soc.}, 35(1):1--39, 1992.

\bibitem{BPpi2gen}
William~A. Bogley and Steve~J. Pride.
\newblock {Calculating generators of $\pi\sb 2$}.
\newblock In C.~Hog-Angeloni, W.~Metzler, and A.J. Sieradski, editors, {\em
  Two-dimensional homotopy and combinatorial group theory}, volume 197 of {\em
  London Mathematical Society Lecture Note Series}, pages 157--188. Cambridge:
  Cambridge University Press, 1993.

\bibitem{BW1}
William~A. {Bogley} and Gerald {Williams}.
\newblock {Efficient finite groups arising in the study of relative
  asphericity.}
\newblock {\em {Math. Z.}}, 284(1-2):507--535, 2016.

\bibitem{BW2}
William~A. {Bogley} and Gerald {Williams}.
\newblock {Coherence, subgroup separability, and metacyclic structures for a
  class of cyclically presented groups.}
\newblock {\em {J. Algebra}}, 480:266--297, 2017.

\bibitem{BrownBook}
Kenneth~S. {Brown}.
\newblock {\em Cohomology of groups}, volume~87 of {\em Graduate Texts in
  Mathematics}.
\newblock Springer New York, 1982.

\bibitem{BrownCohNote}
Kenneth~S. {Brown}.
\newblock {Lectures on the cohomology of groups.}
\newblock In {\em {Cohomology of groups and algebraic $K$-theory. Selected
  papers of the international summer school on cohomology of groups and
  algebraic $K$-theory, Hangzhou, China, July 1--3, 2007}}, pages 131--166.
  Somerville, MA: International Press; Beijing: Higher Education Press, 2010.

\bibitem{CHK}
A.~Cavicchioli, F.~Hegenbarth, and A.C. Kim.
\newblock {A geometric study of Sieradski groups.}
\newblock {\em Algebra Colloq.}, 5(2):203--217, 1998.

\bibitem{COS}
Alberto {Cavicchioli}, E.A. {O'Brien}, and Fulvia {Spaggiari}.
\newblock {On some questions about a family of cyclically presented groups.}
\newblock {\em {J. Algebra}}, 320(11):4063--4072, 2008.

\bibitem{Chalk98}
Christopher~P. {Chalk}.
\newblock {Fibonacci groups with aspherical presentations.}
\newblock {\em {Commun. Algebra}}, 26(5):1511--1546, 1998.

\bibitem{CCH}
Ian~M. {Chiswell}, Donald~J. {Collins}, and Johannes {Huebschmann}.
\newblock {Aspherical group presentations.}
\newblock {\em {Math. Z.}}, 178:1--36, 1981.

\bibitem{CorsonTrace}
J.M. {Corson} and B.~{Trace}.
\newblock {Diagrammatically reducible complexes and Haken manifolds.}
\newblock {\em {J. Aust. Math. Soc., Ser. A}}, 69(1):116--126, 2000.

\bibitem{Davidson09}
Peter~J. {Davidson}.
\newblock {On the asphericity of a family of relative group presentations.}
\newblock {\em {Int. J. Algebra Comput.}}, 19(2):159--189, 2009.

\bibitem{DV73}
Eldon {Dyer} and A.T. {Vasquez}.
\newblock {Some small aspherical spaces.}
\newblock {\em {J. Aust. Math. Soc.}}, 16:332--352, 1973.

\bibitem{Edj91b}
Martin {Edjvet}.
\newblock {Equations over groups and a theorem of Higman, Neumann and Neumann.}
\newblock {\em {Proc. Lond. Math. Soc. (3)}}, 63(3):563--589, 1991.

\bibitem{Edj91a}
Martin {Edjvet}.
\newblock Solutions of certain sets of equations over groups.
\newblock In {\em Groups St Andrews 1989, Volume~1}, number 159 in Lecture Note
  Series, pages 105--123. London Mathematical Society, Cambridge University
  Press, 1991.

\bibitem{ECap}
Martin Edjvet.
\newblock {On the asphericity of one-relator relative presentations.}
\newblock {\em Proc. R. Soc. Edinb., Sect. A}, 124(4):713--728, 1994.

\bibitem{EdjJuh14}
Martin {Edjvet} and Arye {{Juhasz}}.
\newblock The infinite {F}ibonacci groups and relative asphericity.
\newblock {\em Trans. Lond. Math. Soc.}, 4(1):148--218, 2017.

\bibitem{FR}
Max {Forester} and Colin {Rourke}.
\newblock {A fixed-point theorem and relative asphericity.}
\newblock {\em {Enseign. Math. (2)}}, 51(3-4):231--237, 2005.

\bibitem{Freedman78}
Michael~H. {Freedman}.
\newblock Remarks on the solution of first degree equations in groups.
\newblock In K.C. Millet, editor, {\em Algebraic and Geometric Topology
  Proceedings of a Symposium held at Santa Barbara in honor of Raymond L.
  Wilder, July 25--29, 1977}, volume 664 of {\em Lect. Notes Math.}, pages
  87--93. Springer, 1978.

\bibitem{GAP}
The GAP~Group.
\newblock {\em {GAP -- Groups, Algorithms, and Programming, Version 4.8.7}},
  2017.

\bibitem{GerstenUnpubl}
S.~M. Gersten.
\newblock On certain equations over torsion free groups.

\bibitem{Gersten87}
S.~M. Gersten.
\newblock Reducible diagrams and equations over groups.
\newblock In S.~M. Gersten, editor, {\em Essays in Group Theory}, volume~8 of
  {\em Publ. Math. Sci. Res. Inst.}, pages 15--73. Springer, New York, 1987.

\bibitem{GR}
M.~{Gerstenhaber} and O.S. {Rothaus}.
\newblock {The solution of sets of equations in groups.}
\newblock {\em {Proc. Natl. Acad. Sci. USA}}, 48:1531--1533, 1962.

\bibitem{GH95}
N.D. Gilbert and James Howie.
\newblock {LOG groups and cyclically presented groups.}
\newblock {\em {J. Algebra}}, 174(1):118--131, 1995.

\bibitem{HatcherAT}
Allen {Hatcher}.
\newblock {\em {Algebraic topology.}}
\newblock Cambridge: Cambridge University Press, 2002.

\bibitem{HKM}
H.~{Helling}, A.C. {Kim}, and J.L. {Mennicke}.
\newblock {A geometric study of Fibonacci groups.}
\newblock {\em {J. Lie Theory}}, 8(1):1--23, 1998.

\bibitem{How81}
James {Howie}.
\newblock {On pairs of 2-complexes and systems of equations over groups.}
\newblock {\em {J. Reine Angew. Math.}}, 324:165--174, 1981.

\bibitem{Howie83}
James {Howie}.
\newblock {The solution of length three equations over groups.}
\newblock {\em {Proc. Edinb. Math. Soc., II. Ser.}}, 26:89--96, 1983.

\bibitem{Howie87}
James {Howie}.
\newblock How to generalize one-relator group theory.
\newblock In S.~M. Gersten and John~R. Stallings, editors, {\em Combinatorial
  group theory and topology}, volume 111 of {\em Annals of Mathematics
  Studies}, pages 53--78. Princeton University Press, 1987.

\bibitem{HowieHighPower}
James {Howie}.
\newblock {The quotient of a free product of groups by a single high-powered
  relator. I: Pictures. Fifth and higher powers.}
\newblock {\em {Proc. Lond. Math. Soc. (3)}}, 59(3):507--540, 1989.

\bibitem{How91}
James {Howie}.
\newblock {Nonsingular systems of two length three equations over a group.}
\newblock {\em {Math. Proc. Camb. Philos. Soc.}}, 110(1):11--24, 1991.

\bibitem{HM}
James Howie and V.~Metaftsis.
\newblock {On the asphericity of length five relative group presentations.}
\newblock {\em {Proc. Lond. Math. Soc. (3)}}, 82(1):173--194, 2001.

\bibitem{HS}
James {Howie} and Hans~Rudolf {Schneebeli}.
\newblock {Permutation modules and projective resolutions.}
\newblock {\em {Comment. Math. Helv.}}, 56:447--464, 1981.

\bibitem{HW16}
James {Howie} and Gerald {Williams}.
\newblock {Fibonacci type presentations and 3-manifolds.}
\newblock {\em {Topology Appl.}}, 215:24--34, 2017.

\bibitem{Hueb}
Johannes {Huebschmann}.
\newblock {Cohomology theory of aspherical groups and of small cancellation
  groups.}
\newblock {\em {J. Pure Appl. Algebra}}, 14:137--143, 1979.

\bibitem{Ivanov1999}
S.V. Ivanov.
\newblock An asphericity conjecture and {K}aplansky problem on zero divisors.
\newblock {\em {J. Algebra}}, 216(1):13--19, 1999.

\bibitem{JBook2}
D.L. Johnson.
\newblock {\em Topics in the Theory of Group Presentations}, volume~42 of {\em
  London Mathematical Society Lecture Note Series}.
\newblock Cambridge: Cambridge University Press, 1980.

\bibitem{Kaplansky72}
Irving Kaplansky.
\newblock {\em Fields and rings}.
\newblock Chicago Lectures in Mathematics. Chicago, IL: University of Chicago
  Press, 2nd edition, 1972.

\bibitem{SKK6}
Seong~Kun {Kim}.
\newblock {On the asphericity of certain relative presentations over
  torsion-free groups.}
\newblock {\em {Int. J. Algebra Comput.}}, 18(6):979--987, 2008.

\bibitem{SKK}
Seong~Kun {Kim}.
\newblock {On the asphericity of length-6 relative presentations with
  torsion-free coefficients.}
\newblock {\em {Proc. Edinb. Math. Soc., II. Ser.}}, 51(1):201--214, 2008.

\bibitem{Kly}
Anton~A. {Klyachko}.
\newblock {A funny property of sphere and equations over groups.}
\newblock {\em {Commun. Algebra}}, 21(7):2555--2575, 1993.

\bibitem{Kly05}
Anton~A. {Klyachko}.
\newblock {The Kervaire-Laudenbach conjecture and presentations of simple
  groups.}
\newblock {\em {Algebra Logika}}, 44(4):399--437, 2005.

\bibitem{Leary2000}
Ian~J. Leary.
\newblock Asphericity and zero divisors in group algebras.
\newblock {\em J. Algebra}, 227(1):362--364, 2000.

\pagebreak

\bibitem{LynId}
Roger~C. {Lyndon}.
\newblock {Cohomology theory of groups with a single defining relation.}
\newblock {\em {Ann. Math. (2)}}, 52:650--665, 1950.

\bibitem{LyndonSchupp}
Roger~C. Lyndon and Paul~E. Schupp.
\newblock {\em Combinatorial group theory}.
\newblock Berlin: Springer, reprint of the 1977 edition, 2001.

\bibitem{MKS}
Wilhelm {Magnus}, Abraham {Karrass}, and Donald {Solitar}.
\newblock {\em Combinatorial group theory. Presentations of groups in terms of
  generators and relations}.
\newblock Dover Books on Advanced Mathematics. Dover Publications, 2nd edition,
  1976.

\bibitem{McD17}
Kirk~M. McDermott.
\newblock {\em Topological and Dynamical Properties of Cyclically Presented
  Groups}.
\newblock PhD thesis, Oregon State University, 2017.

\bibitem{Met}
V.~Metaftsis.
\newblock On the asphericity of relative group presentations of arbitrary
  length.
\newblock {\em Int. J. Algebra Comput.}, 13(3):323--339, 2003.

\bibitem{Montgomery69}
M.S. {Montgomery}.
\newblock {Left and right inverses in group algebras.}
\newblock {\em {Bull. Am. Math. Soc.}}, 75:539--540, 1969.

\bibitem{NeumannRoot}
B.H. {Neumann}.
\newblock {Adjunction of elements to groups.}
\newblock {\em {J. Lond. Math. Soc.}}, 18:4--11, 1943.

\bibitem{Odoni}
R.W.K. {Odoni}.
\newblock {Some Diophantine problems arising from the theory of
  cyclically-presented groups.}
\newblock {\em {Glasg. Math. J.}}, 41(2):157--165, 1999.

\bibitem{Ol84}
A.Yu. {Ol'shanskij}.
\newblock On a geometric method in the combinatorial group theory.
\newblock In {\em Proc. Int. Congr. Math., Warszawa 1983}, volume~1, pages
  415--424. Warszawa: PWN-Polish Scientific Publishers, 1984.

\bibitem{Ol91}
A.Yu. {Ol'shanskij}.
\newblock {\em {Geometry of defining relations in groups.}}
\newblock Dordrecht etc.: Kluwer Academic Publishers, 1991.

\bibitem{P95}
M.I. Prishchepov.
\newblock Asphericity, atoricity and symmetrically presented groups.
\newblock {\em Comm. Algebra}, 23(13):5095--5117, 1995.

\bibitem{Ratcliffe87}
John~G. {Ratcliffe}.
\newblock {Euler characteristics of 3-manifold groups and discrete subgroups of
  SL(2, ${\mathbb C})$.}
\newblock {\em {J. Pure Appl. Algebra}}, 44:303--314, 1987.

\bibitem{AJSColor}
Allan~J. {Sieradski}.
\newblock {A coloring test for asphericity.}
\newblock {\em {Q. J. Math., Oxf. II. Ser.}}, 34:97--106, 1983.

\bibitem{AJSSquash}
Allan~J. Sieradski.
\newblock Combinatorial squashings, 3-manifolds, and the third homology of
  groups.
\newblock {\em Invent. Math.}, 84:121--139, 1986.

\bibitem{AJSAlgTop}
Allan~J. {Sieradski}.
\newblock Algebraic topology for two dimensional complexes.
\newblock In C.~Hog-Angeloni, W.~Metzler, and A.J. Sieradski, editors, {\em
  {Two-dimensional Homotopy and Combinatorial Group Theory}}, number 197 in
  London Mathematical Society Lecture Note Series, pages 51--96. Cambridge:
  Cambridge University Press, 1993.

\bibitem{Wall61}
C.T.C. {Wall}.
\newblock {Rational Euler characteristics.}
\newblock {\em {Proc. Camb. Philos. Soc.}}, 57:182--184, 1961.

\bibitem{JHCCH2}
J.H.C. {Whitehead}.
\newblock {Combinatorial homotopy. II.}
\newblock {\em {Bull. Am. Math. Soc.}}, 55:453--496, 1949.

\bibitem{WilliamsCHR}
Gerald {Williams}.
\newblock {The aspherical Cavicchioli-Hegenbarth-Repov\v s generalized
  Fibonacci groups.}
\newblock {\em {J. Group Theory}}, 12(1):139--149, 2009.

\bibitem{WilliamsUnimodular}
Gerald {Williams}.
\newblock {Unimodular integer circulants associated with trinomials.}
\newblock {\em {Int. J. Number Theory}}, 6(4):869--876, 2010.

\bibitem{GW12a}
Gerald {Williams}.
\newblock {Groups of Fibonacci type revisited.}
\newblock {\em {Int. J. Algebra Comput.}}, 22(8), 2012.

\end{thebibliography}
\end{document}